\newcommand\R{\mathbb{R}}
\newcommand{\St}{{\mathcal St}}
        \newcommand{\HH}{{\mathcal H}}
        \renewcommand{\H}{\HH^1}
        \newcommand{\degre}{\ensuremath{^\circ}}
        \newcommand{\defeq}{:=}
        \newcommand{\forget}[1]{}
        \def\sup{\mathrm{sup}\,}    
        \def\deg{\mathrm{deg}\,}
        \def\Ntw{\mathrm{Ntw}\,}
        \def\dist{\mathrm{dist}\,}
        \def\diam{\mathrm{diam}\,}
        \def\X{\mathrm{X}}
        \def\S{\mathcal S}
        \def\diam{\mathrm{diam}\,}
        \def\turn{\mathrm{turn}}
        \def\Int{\mathrm{Int}}
\tikzset{arcnode/.style={
            decoration={
                        markings, raise = 2mm,
                        mark=at position 0.5 with {
                        \node[inner sep=0] {#1};
                        }
            },
            postaction={decorate}
      }
}
\tikzstyle{dotnode} = [draw, fill, inner sep = 0pt, minimum size = 1.1mm, circle]
\tikzstyle{termnode} = [draw, fill = white, inner sep = 0pt, minimum size = 1.5mm, circle]
\newtheorem{theorem}{Theorem}[section]
\newtheorem{definition}[theorem]{Definition}
\newtheorem{lemma}[theorem]{Lemma}
\newtheorem{corollary}[theorem]{Corollary}
\theoremstyle{remark}
\newtheorem{remark}[theorem]{Remark}
\newtheorem{example}[theorem]{Example}
\numberwithin{equation}{section}
\title{On the horseshoe conjecture for maximal distance minimizers}
\author{Danila Cherkashin \and Yana Teplitskaya}
\begin{document}
\maketitle

\begin{abstract}
We study the properties of sets $\Sigma$ having the minimal length (one-dimensional Hausdorff measure) over the class		
of closed connected sets $\Sigma \subset \mathbb{R}^2$ satisfying the inequality $\max_{y \in M} \dist(y,\Sigma) \leq r$ for a given compact
set $M \subset \mathbb{R}^2$ and some given $r > 0$. Such sets play the role of shortest possible pipelines arriving at a distance at most $r$ 
to every point of $M$, where $M$ is the set of customers of the pipeline.
	
We prove a conjecture of Miranda, Paolini and Stepanov that describes the set of minimizers for $M$ a circumference of radius $R>0$ for the case when $r < R/4.98$.
Moreover we show that when $M$ is the boundary of a smooth convex set with minimal radius of curvature $R$, then every minimizer $\Sigma$ has similar structure for $r < R/5$.
Additionaly, we prove a similar statement for local minimizers.
\end{abstract}

{\sc Keywords}. Steiner tree, locally minimal network, maximal distance minimizer.

{\sc AMS Subject Classification}: 49Q10, 49Q20, 49K30; 90B10, 90C27.

	\section{Preliminaries}

\subsection{Introduction}
For a given compact set $M \subset \mathbb{R}^2$ consider the functional
\[
	F_{M}(\Sigma)\defeq \sup _{y\in M}\dist (y, \Sigma),
\]
where $\Sigma$ is a subset of $\R^2$ and $\dist(y, \Sigma)$ stands for the Euclidean distance between $y$ and $\Sigma$ (naturally, $F_{M} (\emptyset) \defeq +\infty$). The quantity $F_M (\Sigma)$ will be called the \emph{energy} of $\Sigma$.
Consider the class of closed connected sets $\Sigma \subset \R^2$ satisfying $F_M(\Sigma) \leq r$ for some $r > 0$. We are interested in the properties of
sets of minimal length (one-dimensional Hausdorff measure) $\H(\Sigma)$ over the mentioned class. Such sets will be further called \emph{minimizers}. They can be viewed as shortest possible
pipelines arriving at a distance at most $r$ to every point of $M$ which in this case is considered as the
set of customers of the pipeline.

It is proven (in fact, even in the general $n$-dimensional case $M\subset \R^n$; see~\cite{mir} for the rigorous statement and details) that the set $OPT^*_\infty(M)$ of minimizers (for all $r>0$) is nonempty and coincides with the set $OPT_\infty(M)$ of solutions of the dual problem: minimize $F_M$ over all compact connected sets  $\Sigma \subset \mathbb{R}^2$ with prescribed bound on the total length $\H(\Sigma) \leq l$.
The latter minimizing problem is quite similar to many other problems of minimizing other functionals over closed connected sets, 
for instance the average distance with respect
to some finite Borel measure (see~\cite{BOS, BS1, BS2, L, XS}) or similar urban planning problems (see~\cite{BPSS}). 
If one minimizes maximum or average distance functional over discrete sets with an a priori restriction on the number of connected components (rather than over connected one-dimensional sets) one gets another class of closely related problems
known as $k$-center problem and $k$-median problem (see e.g.~\cite{GL, SD, SO} as well as~\cite{BouchJimMahad11, BrabutSamSte09} and references therein).

Some basic properties of minimizers for the above mentioned problem in $n$-dimensional case (like the absence of loops and Ahlfors regularity) have been proven in~\cite{PaoSte04max}. Further, in~\cite{mir} the following characterization of minimizers has been studied.
Let $B_\rho (x)$ be the open ball of radius $\rho$ centered at a point $x$, and let $B_\rho(M)$ be the open $\rho$-neighborhood of $M$ i.e.\
	\[
        		B_\rho(M) \defeq \bigcup_{x\in M} B_\rho(x).
	\]
Further, we introduce

\begin{definition}
A point $x \in \Sigma$ is called \emph{energetic}, if for all $\rho>0$ one has
 	\[
		F_{M}(\Sigma \setminus B_{\rho}(x)) > F_{M}(\Sigma).
	\]
\end{definition}
Denote the set of all energetic points of $\Sigma$ by $G_\Sigma$.

Let us consider a minimizer $\Sigma$ with energy $F_{M}(\Sigma) = r$ (the subset of  $OPT_{\infty}^{*}(M)$ of minimizers with energy $r$ will be further denoted by $OPT_{\infty}^{*}(M, r)$). Then the set $\Sigma$ can be split into three disjoint subsets:
        \[
        \Sigma=E_{\Sigma}\sqcup\X_{\Sigma}\sqcup\S_{\Sigma},
        \]
        where $X_{\Sigma}\subset G_\Sigma$ 
is the set of isolated energetic points 
(i.e.\ every $x\in X_\Sigma$ is energetic and there is a $\rho>0$ possibly depending on $x$ such that $B_{\rho}(x)\cap G_{\Sigma}=\{x\}$), $E_{\Sigma} := G_{\Sigma}\setminus X_{\Sigma}$ is the set of non isolated energetic points and $S_\Sigma \defeq \Sigma \setminus G_\Sigma$ is the set of non energetic points also called the Steiner part of $\Sigma$.
        In~\cite{mir} the following assertions have been proven:
        \begin{itemize}
        \item[(a)] For every point $x \in G_\Sigma$ there exists a point $Q_x \in M$ (possibly non unique) such that $\dist (x, Q_x) = r$ and $B_{r}(Q_x)\cap \Sigma=\emptyset$. If $X_\Sigma$ is not finite, the limit points of $X_\Sigma$ belong to $E_\Sigma$.
        \item[(b)] For all $x \in S_\Sigma$ there exists an $\varepsilon>0$ such that $S_\Sigma \cap B_{\varepsilon}(x)$ is either a line segment or a regular tripod, i.e.\ the union of three line segments with an endpoint in $x$ and relative angles of $2\pi/3$. If a point $x \in S_\Sigma$ is a center of a regular tripod, then it called a \textit{Steiner point} (or a \textit{branching point}) of $\Sigma$.
        \end{itemize}

\noindent Note that the finiteness of $\H (\Sigma)$ implies that $\Sigma$ is path-connected (see, for example,~\cite{EiSaHa}).
By the absence of loops the path in $\Sigma$ between every couple of points of $\Sigma$ is unique. Also the condition on curvature of $M_r$ and convexity of $N$ imply $C^{1,1}$ smoothness of $M_r$.

This paper is organized as follows: Subsection~\ref{stp} contains a very brief survey on the Steiner problem, Subsection~\ref{not} describes basic notations, Section~\ref{mr} is devoted to the statement of the main result with Subsection~\ref{op} containing a sketch of the proof of the main theorem (Theorem~\ref{theo1}), Section~\ref{pr} contains complete proofs of lemmas stated in Section~\ref{mr}.

	\subsection{Steiner problem} \label{stp}

The Steiner problem which has several different but more or less equivalent formulations, is that of finding a set $S$ with minimal length (one-dimensional Hausdorff measure $\H(S)$) such that $S \cup A$ is connected, where $A$ is a given compact subset of a given complete metric space $X$.

Namely, if we define
\[
\Ntw(A) \defeq \{S \subset X: S \cup A \mbox{ is connected} \}
\]
then the Steiner problem is to find an element of $\Ntw(A)$ with minimal $\H$-length.
 
If $S$ is a solution to the Steiner problem for a given set $A$ (in the case when $X$ is proper and connected a solution exists~\cite{PaoSte13Steiner}), then the set $\Sigma \defeq A \cup S$ is called a Steiner tree for the set $A$ (or a Steiner tree connecting the set $A$, or just a Steiner set). 
It has been proven in~\cite{PaoSte13Steiner} that in the case $\H(S) < +\infty$ the following properties hold: 

\begin{enumerate}
 \item $\overline{S}$ contains no loops (homeomorphic images of $S^1$);
 \item $S\setminus A$ has at most countably many connected components, and each of the latter has strictly positive length;
 \item the closure of every connected component of $S \setminus A$ is a topological tree with endpoints on $A$ and with at most one endpoint belonging to each connected component of $A$. 
\end{enumerate}

From now on we will consider the Steiner problem in the case when the ambient space $X$ is the Euclidean plane $\R^2$. Then

\begin{enumerate}
\addtocounter{enumi}{3}
 \item $\Sigma \setminus A$ consists of line segments (this follows from the result of ~\cite{PaoSte13Steiner} stating that away from the data every Steiner tree is an embedded graph consisting of geodesic segments); 
 \item the angle between two segments adjacent to the same vertex is greater or equal to $2\pi/3$~\cite{ivanov1994minimal}.
 \item Let us call \textit{a Steiner (or branching) point} such a point of $\Sigma$ that does not belong to $A$ and which is not an interior point of a segment of $\Sigma$.
 The degree (in the graph theoreric sense) of a Steiner point $x$ is equal to $3$. In this case the angle between any pair of segments adjacent to $x$ is equal to $2\pi/3$ (see~\cite{ivanov1994minimal}). Such a set is called \textit{regular tripod}.
 \item It is well-known that for $A = \{x_1, x_2, x_3\} \subset \R^2$ there is the unique solution of the Steiner problem. We denote it by $\St (x_1, x_2, x_3)$. 
\end{enumerate}

We will say that a set $S \in \Ntw(A)$ is a \textit{locally minimal network} for the given set $A$ if for an arbitrary point $x\in S$ there exists a neighbourhood $U \ni x$ such that $S \cap \overline{U}$ is a Steiner tree for $S \cap \partial U$. If a neighbourhood of a point $x \in S$ is a regular tripod then it is still called a \textit{Steiner point}. 

A locally minimal network satisfies all the properties of a Steiner tree mentioned above except the first one (see~\cite{PaoSte13Steiner, ivanov1994minimal}).

In this paper we will use the locally minimal networks for a set $A$ that consists of at most four points. It is well known that there are only $7$ possible combinatorial types of such networks which one can find on Figures~\ref{lmn23} and~\ref{lmn4} (see~\cite{PaoSte13Steiner, ivanov1994minimal}).

\subsection{Notation} \label{not}

We introduce the following notation.

\begin{itemize}
    \item For a given set $X \subset \R^2$ we denote by $\overline{X}$ its closure, by $\Int (X)$ its interior and by $\partial X$ its topological boundary.
    \item For given points $B$, $C$ we use the notation $[BC]$, $[BC)$ and $(BC)$ for the corresponding (closed) line segment, ray and line respectively.
    We denote by $]BC]$ and $]BC[$ the corresponding semiopen and open segments, and by $|BC|$ the length of these segments.
    \item By a \textit{closed convex curve} we mean a \textit{boundary of a convex compact set}.
    \item We call a \textit{chord} of a closed convex curve $Z$ a line segment connecting two points of $Z$. 
    \item A subset of a planar curve $Z$ is called an \textit{arc} of $Z$ if it is a continuous injective image of an interval (possibly degenerate). We say that an arc of $Z$ is \emph{closed}, if it is a relatively closed subset of $Z$. The images of the endpoints of the interval will be called \textit{ends} of the arc; the images of internal points of the interval will be called \textit{internal} points of the arc. Whenever there is no confusion the closed arc with ends $B$, $C$ will be denoted by $[\breve {BC}]$ and its length by $|\breve{BC}|$ (not to be confused with the length of the segment connecting $B$ and $C$ which is denoted by $|BC|$).  
    
    \item For a convex closed set $N \subset \R^2$ we define the \emph{minimal radius of curvature} 
of its boundary by the formula
    \[
R (\partial N) \defeq \inf_{x \in \partial N} \sup \{\rho \colon 
\overline{B_\rho(O)} \cap \partial N  = x \text{ for some } O \in N\}.
\]
    \item For a convex closed set $N \subset \R^2$ we define the \emph{inner set} $N_r$ to be the
set of all points of $N$ lying at a distance of at least $r$ from the boundary, namely, $N_r \defeq N \setminus B_r(\partial N)$.

    \item From now on we define $N \defeq \text{conv}\,(M)$, where $\textit{conv}$ stands for the closed convex hull,
    and $M_r \defeq \partial N_r$. 
    Note that $N$, $N_r$, $M$ and $M_r$ are closed sets. 
\end{itemize}

From now $M$ is a convex closed curve with minimal radius of curvature $R > r$.
Clearly, $M_r$ is convex closed curve and has minimal radius of curvature at least $R-r$.

\begin{itemize}
    
    \item If a line segment $[BC]$ is an arc of $M_r$ and a chord of $M_r$ simultaneously (this happens if $M_r$ is not strictly convex) we will work with $[BC]$ as with the arc in the case when $]BC[$ has an energetic point, and as with the chord otherwise. In Section 2 it will be explained in details.

    \item We say that an arc $\breve{[BC]} \subset \Sigma$ of $M_r$ \textit{is continued by a chord} in the set $\Sigma$ if for some $J \in \{B, C\}$ there is a chord $[JD]$ of $M_r$ such that $[JD] \subset \Sigma$.
    \item For a point $x \in \Sigma \cap M_r$ let $Q_x \in M$ be such a point that $\dist (x, Q_x) = r$ (in this case $Q_x$ is unique because of the condition on curvature of $M$). 
    Also, in this case $[xQ_x]$ is a part of the normal to $M_r$ at $x$ and of the normal to $M$ at $Q_x$.
    \item For an energetic point $x \in G_\Sigma$ let $Q_x$ be a point mentioned in the property (a) of the set of energetic points. We may consider this choice of $Q_x \in M$ as a canonical choice of a point on $M$ at the distance $r$ from $x$.
    \item We say that a set $Z \subset \R^2$ \textit{covers} a subset $Q \subset M$ if $Q \subset \overline {B_r (Z)}$. 
    Usually we use the latter notion for the case when $Q$ is an arc of $M$. 
    \item For a set $Z \subset \mathbb{R}^2$ we define the \textit{diameter} of $Z$ as $\sup \{\dist (x,y)| x,y \in Z \}$, and denote it by $\diam (Z)$.
    \item We fix the clockwise orientation of the plane.
    \item For rays $[BC)$, $[CD)$ let $\angle([BC), [CD))$ stand for the \textit{directed angle} from $[BC)$ to $[CD)$ with respect to the clockwise orientation.
    \item 
    When using the asymptotic expressions $o(\cdot)$, $O(\cdot)$, we will always be silently assuming that the respective variable tends to some limit; both the variable and the limit will be usually clear from the context (if it necessary to avoid confusion, the variable name will be indicated in the lower index of the asymptotic symbols).
    
\end{itemize}
	\section {Main results} \label{mr}

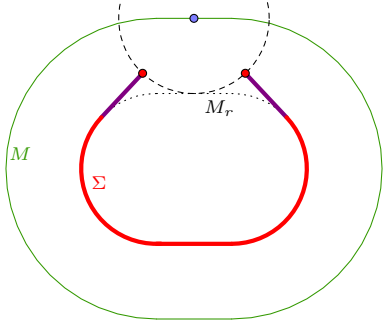
\begin{figure}
	\begin{center}
		\definecolor{yqqqyq}{rgb}{0.5019607843137255,0.,0.5019607843137255}
		\definecolor{xdxdff}{rgb}{0.49019607843137253,0.49019607843137253,1.}
		\definecolor{ffqqqq}{rgb}{1.,0.,0.}
    \definecolor{ttzzqq}{rgb}{0.2,0.6,0.}
    \begin{tikzpicture}[line cap=round,line join=round,>=triangle 45,x=1.0cm,y=1.0cm]
    \clip(-5.754809259201689,-2.15198488708635626) rectangle (0.492320066152345,2.2268250892462824);
    \draw [color=ttzzqq] (-3.,2.)-- (-2.,2.);
    \draw [color=ttzzqq] (-2.,-2.)-- (-3.,-2.);
    \draw [line width=1.6pt,color=ffqqqq] (-3.,-1.)-- (-2.,-1.);
    \draw [dotted] (-3.,1.)-- (-2.,1.);
    \draw [shift={(-3.,0.)},color=ttzzqq]  plot[domain=1.5707963267948966:4.71238898038469,variable=\t]({1.*2.*cos(\t r)+0.*2.*sin(\t r)},{0.*2.*cos(\t r)+1.*2.*sin(\t r)});
    \draw [shift={(-3.,0.)},dotted]  plot[domain=1.5707963267948966:4.71238898038469,variable=\t]({1.*1.*cos(\t r)+0.*1.*sin(\t r)},{0.*1.*cos(\t r)+1.*1.*sin(\t r)});
    \draw [shift={(-2.,0.)},dotted]  plot[domain=-1.5707963267948966:1.5707963267948966,variable=\t]({1.*1.*cos(\t r)+0.*1.*sin(\t r)},{0.*1.*cos(\t r)+1.*1.*sin(\t r)});
    \draw [shift={(-2.,0.)},color=ttzzqq]  plot[domain=-1.5707963267948966:1.5707963267948966,variable=\t]({1.*2.*cos(\t r)+0.*2.*sin(\t r)},{0.*2.*cos(\t r)+1.*2.*sin(\t r)});
    \draw [dash pattern=on 2pt off 2pt] (-2.5008100281931047,2.) circle (1.cm);
    \draw [line width=1.6pt,color=yqqqyq] (-3.7309420990521054,0.6824394829091469)-- (-3.1832495111022516,1.2690579009478955);
    \draw [line width=1.6pt,color=yqqqyq] (-1.8178907146013357,1.2695061868000874)-- (-1.2695061868000874,0.6829193135917684);
    \draw [shift={(-2.,0.)},line width=1.6pt,color=ffqqqq]  plot[domain=-1.5707963267948966:0.7517515639553677,variable=\t]({1.*1.*cos(\t r)+0.*1.*sin(\t r)},{0.*1.*cos(\t r)+1.*1.*sin(\t r)});
    \draw [shift={(-3.,0.)},line width=1.6pt,color=ffqqqq]  plot[domain=2.390497746093128:4.771687465439039,variable=\t]({1.*1.*cos(\t r)+0.*1.*sin(\t r)},{0.*1.*cos(\t r)+1.*1.*sin(\t r)});
    \begin{scriptsize}
    \draw[color=ttzzqq] (-4.8089822121844977,0.2025724503290166) node {$M$};
    \draw [fill=xdxdff] (-2.5008100281931047,2.) circle (1.5pt);
    \draw [fill=ffqqqq] (-3.1832495111022516,1.2690579009478955) circle (1.5pt);
    \draw [fill=ffqqqq] (-1.8178907146013357,1.2695061868000874) circle (1.5pt);
    \draw[color=ffqqqq] (-3.763965847825129,-0.1818662773850787) node {$\Sigma$};
    \draw[color=black] (-2.159620164509451,0.8028001329511188) node {$M_r$};
    \end{scriptsize}
    \end{tikzpicture}
    \caption{A horseshoe.}
    \label{horseFig}
	\end{center}
    \end{figure}

    \begin{definition}
    Let  $M$ be a closed convex curve with minimal radius of curvature $R > r$.
    Then the connected curve $\Sigma$ is called a \emph{horseshoe}, 
if $F_M (\Sigma) = r$ and $\Sigma$ is a union of an arc $q$ of $M_r$ with two 
non degenerate tangent segments to $M_r$ at the different ends of $q$ ending with energetic points (as shown in Fig.~\ref{horseFig}).
    \end{definition}

    The following theorem proves the particular case of the conjecture of Miranda, Paolini and Stepanov 
from~\cite{mir} about the set $OPT_{\infty}^{*}(M, r)$ of minimizers for $M := \partial B_R(O)$ if $r < R/4.98$.
    It shows even more: namely, that \emph{every} closed convex curve $M$ has minimizers 
of the same structure, if minimal radius of curvature of $M$ is at least $5r$.
Note however that it does not prove the whole conjecture (which has been formulated in~\cite{mir} for circumference $M = \partial B_R(O)$ and every $r< R$).

    \begin{theorem}\label{theo1}	
    For every closed convex curve $M$ with minimal radius of curvature $R$ and for every 
$r < R/5$ the set of minimizers $OPT_{\infty}^{*}(M, r)$ contains only horseshoes.
    For the circumference $M = \partial B_R(O)$ the claim is true for $r < R/4.98$.
    \end{theorem}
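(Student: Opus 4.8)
The plan is to reduce a minimizer to a finite tree assembled from arcs of $M_r$ and straight segments, to rule out by a length comparison any such tree other than a single arc of $M_r$ flanked by two tangent segments, and finally to optimise the one remaining parameter for the sharp thresholds. Concretely, let $\Sigma\in OPT^*_\infty(M,r)$, so that $F_M(\Sigma)=r$ and $\H(\Sigma)<\infty$, whence $\Sigma$ is a path-connected, loop-free topological tree. One may assume $\Sigma\subset N$, where $N=\co M$, by replacing $\Sigma$ with its image under the nearest-point projection onto $N$, which is $1$-Lipschitz and fixes $M$, hence increases neither $\H$ nor $F_M$. Each leaf of $\Sigma$ is energetic (otherwise a terminal sub-arc would be removable without changing $F_M$), and by property~(a) every energetic point lies in the closed annulus $\overline{N\setminus N_r}$ and carries a witness $Q\in M$ at distance exactly $r$ with $B_r(Q)\cap\Sigma=\emptyset$; by property~(b), $S_\Sigma$ is a locally finite union of segments meeting only at regular $120\degre$ tripods.

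I would next establish the basic structure (the substance of the lemmas stated in Section~\ref{mr}): $\Sigma$ is a \emph{finite} union of arcs of $M_r$ and straight segments. Finiteness rests on a minimizer not being able to afford infinitely many short terminal whiskers, each of which covers only a tiny sub-arc of $M$ that the bulk of $\Sigma$ could reach more cheaply; the shape of the pieces comes from minimality (a non-geodesic piece not pinned down by the covering constraint can be shortened) together with the energetic condition: if $A\in\Sigma\cap M_r$ has witness $Q_A$ then $\partial B_r(Q_A)$ is tangent to $M_r$ at $A$ from the $M$-side, so a segment issuing from $A$ cannot cross to the $M$-side transversally, and hence is either tangent to $M_r$ at $A$ (peeling off into $N\setminus N_r$) or a chord inside $N_r$; but a chord inside $N_r$ has its whole $r$-neighbourhood at distance $\ge r$ from $M$, meeting $M$ only in its two endpoints, so it covers essentially nothing and is never used except in the degenerate case where $M_r$ contains a segment. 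Thus $\Sigma$ is a tree whose edges are arcs of $M_r$ and outward tangent segments, with $120\degre$ branchings in $N\setminus N_r$.

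The heart of the argument, and where $r<R/5$ enters, is to show $\Sigma$ is a topological arc with a single $M_r$-piece. A branching point $p$ would cut $M$ into three arcs, each covered essentially by one of the subtrees hanging off $p$; using the structural estimates one bounds below the length of any connected set that covers a given arc of $M$ from inside (truncating an arc of $M_r$ by a tangent segment shortens it, but only by a controlled amount), and one checks that three such subtrees plus the branching cost strictly more than a horseshoe whose omitted arc is chosen suitably small --- a deficit that survives only for $r/R$ below the stated ratio. The same comparison excludes a two-leaf $\Sigma$ whose $M_r$-part breaks into two or more arcs (the connecting segments would leave a proportionally large uncovered arc of $M$) and any $\Sigma$ with more than two leaves. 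Hence $\Sigma$ is a single arc $q$ of $M_r$ together with two non-degenerate segments tangent to $M_r$ at the two ends of $q$ and terminating at the energetic leaves $B_1,B_2$, i.e.\ a horseshoe.

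It remains to extract the sharp constants. For $M=\partial B_R(O)$ one parametrises horseshoes by the half-angle $\psi$ subtended by the omitted arc of $M_r$; the terminal segments are tangent to $M_r$ at its ends and must reach just far enough to cover the midpoint $Q_0$ of the omitted arc of $M$, while $B_i$ energetic forces $|B_iQ_0|=r$, which pins their common length to $L(\psi)=R\sin\psi-\sqrt{R^2\sin^2\psi-2R(R-r)(1-\cos\psi)}$ and gives $\H(\Sigma)=(R-r)(2\pi-2\psi)+2L(\psi)$; minimising this over $\psi$ determines the optimal horseshoe, and comparing its length with the lower bounds of the previous step for every competing combinatorial type goes through precisely for $r<R/4.98$, whereas for a general convex $M$ only the lower curvature bound $R-r$ on $M_r$ is available, and the correspondingly weaker estimates give $r<R/5$. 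The main obstacle is exactly this step: one must control, uniformly over all admissible $M$, how deep a Steiner detour between two points of $\overline{N\setminus N_r}$ can reach and how large an arc of $M$ a tangent reaching segment of given length can cover, and then turn these bounds into a clean enough length deficit to beat the already near-optimal horseshoe --- producing an explicit numerical threshold, rather than merely a qualitative ``$r$ small,'' is the delicate part.
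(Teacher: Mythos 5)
Your outline never carries out the step on which the whole theorem rests, and the parts you do assert about it are not right. The decisive difficulty is not ruled out where you claim it is: you dismiss chords of $M_r$ on the grounds that such a segment ``covers essentially nothing and is never used'', but a chord is used for \emph{connectivity}, not covering, and a priori a minimizer could consist of several covering pieces near $M_r$ joined by chords (or by Steiner tripods) --- exactly the structure that beats the horseshoe in the stadium example of Section~2 when the curvature hypothesis fails. So excluding chords and branchings cannot be a soft structural remark; it is precisely where $r<R/5$ must enter quantitatively. In the paper this is done by bounding the length of any chord appearing in a minimizer by $a_M(r)\le 2r$ (and $a_M(r)=2r\sqrt{1-r^2/4R^2}$ for the circle), using $R>2a_M(r)+r$ to kill interior Steiner points, organizing the remaining components into a graph which is shown to be a path, and then proving the central turning-number inequality $\turn(q_S)\le\turn(S)$ (with the corrected version for the two ending components), so that $2\pi=\turn(\partial T)\ge\sum_S\turn(q_S)\ge\turn(M)=2\pi$ forces every component to be a tangent segment and the minimizer to be a horseshoe. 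Your proposed substitute --- a length lower bound for any connected set covering a given arc of $M$, compared against a near-optimal horseshoe --- is only named, not proved; you yourself flag it as ``the delicate part'', and no such comparison inequality is stated, let alone established uniformly over convex $M$.

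Relatedly, your account of where the constants come from does not match anything you (or the paper) actually prove. The threshold $4.98$ in the paper has nothing to do with minimizing a horseshoe length $(R-r)(2\pi-2\psi)+2L(\psi)$ over $\psi$ and comparing with competitor types; it arises because for the circle the chord bound improves to $a_{\partial B_R(O)}(r)=2r\sqrt{1-r^2/4R^2}$, so the structural requirement $R>2a_M(r)+r$ already holds for $r<R/4.98$ (together with a trigonometric estimate in one case of the central lemma that needs $R/r\ge 2.9$). Asserting that your undone comparison ``goes through precisely for $r<R/4.98$'' is a guess, not a derivation. As it stands the proposal is a plausible programme whose key lemma (a quantitative covering/length inequality strong enough to exclude all non-horseshoe trees, including those glued along chords of $M_r$) is missing, so the proof is not complete.
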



    \begin{definition}
    Let $M \subset \mathbb{R}^2$ be a planar compact set. A connected set $\Sigma$ is called \emph{local minimizer} if it covers $M$
    and there is an $\varepsilon > 0$ such that for every connected $\Sigma'$ covering $M$ and satisfying
    $\diam (\Sigma \Delta \Sigma') \leq \varepsilon$ one has
    $\H (\Sigma) \leq \H (\Sigma')$.
    \label{local}
    \end{definition}

    \begin{corollary}
    Let $\hat \Sigma$ be a local minimizer for some closed convex curve $M$ with minimal radius of curvature $R > 5r$.
    Then if $\hat \Sigma$ is not a horseshoe, one has $\H (\hat \Sigma) - \H (\Sigma) \geq (R-5r)/2$, 
    where $\Sigma \in OPT_{\infty}^{*}(M, r)$ is an arbitrary (global) minimizer. 
    \label{coroll}
    \end{corollary}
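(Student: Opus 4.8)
The plan is to deduce Corollary~\ref{coroll} from Theorem~\ref{theo1} together with an inspection of its proof, exploiting the fact that most of that proof modifies a competitor only inside an arbitrarily small ball. First I would check that a local minimizer $\hat\Sigma$ has finite length: otherwise one could excise a part of infinite length lying in a small ball and reconnect the stumps by a short segment, contradicting Definition~\ref{local}. Moreover $\hat\Sigma$ inherits the whole local description of a genuine minimizer — the splitting into energetic and Steiner parts with property~(a), and the description~(b) of the Steiner part by segments and regular tripods — because the corresponding assertions of~\cite{mir, PaoSte04max, PaoSte13Steiner} are proved with competitors coinciding with the set outside an arbitrarily small ball, hence remain valid for local minimizers. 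In particular, every lemma of Section~\ref{mr} whose proof is a local surgery applies verbatim to $\hat\Sigma$.

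\emph{Step 2 (running the proof of Theorem~\ref{theo1} on $\hat\Sigma$).} The reductions in that proof are of two kinds: local surgeries supported in balls of vanishing radius, which are legitimate for $\hat\Sigma$, and a handful of ``global'' replacements, in which the competitor differs from $\Sigma$ over a region of diameter bounded below by a quantity depending only on $R$ and $r$, which need not be legitimate for $\hat\Sigma$. Performing all reductions of the first kind, we reach the alternative: either $\hat\Sigma$ is a horseshoe, or $\hat\Sigma$ is one of the finitely many configurations that the proof of Theorem~\ref{theo1} discards only by a global replacement — an arc of $M_r$ continued by a chord, a configuration containing a branching point, or the whole curve $M_r$. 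The last of these is genuinely a local minimizer (replacing a subarc of $M_r$ by its chord leaves the points of $M$ opposite that subarc uncovered, since the minimal radius of curvature of $M_r$ is at least $R-r$), and it is the shortest of the exceptional configurations, each of which has length at least $\H(M_r)$ by the same curvature bound — which is exactly where the restriction $r<R/5$ enters.

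\emph{Step 3 (the quantitative gap).} So if $\hat\Sigma$ is not a horseshoe then $\H(\hat\Sigma)\ge\H(M_r)$, and it suffices to produce one horseshoe $\Sigma_0$ with $\H(\Sigma_0)\le\H(M_r)-(R-5r)/2$: by Theorem~\ref{theo1} a global minimizer $\Sigma$ is a horseshoe, so $\H(\Sigma)\le\H(\Sigma_0)$ and hence $\H(\hat\Sigma)-\H(\Sigma)\ge\H(M_r)-\H(\Sigma_0)\ge (R-5r)/2$. To build $\Sigma_0$, delete from $M_r$ a short arc $q'$ of total turning $2\beta$ and attach, to the two ends $P_1,P_2$ of the remaining arc $q$, the shortest tangent segments $[P_1e_1]$, $[P_2e_2]$ to $M_r$ that together cover the arc of $M$ opposite $q'$; their free ends are then energetic, so $\Sigma_0$ is a horseshoe and $\H(\Sigma_0)=\H(M_r)-\H(q')+|P_1e_1|+|P_2e_2|$. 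Bounding $\H(q')$ from below through the curvature bound and $|P_1e_1|+|P_2e_2|$ from above through the covering condition $F_M(\Sigma_0)=r$, and optimizing over the admissible range of $\beta$, yields $\H(q')-|P_1e_1|-|P_2e_2|\ge (R-5r)/2$, as required.

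\emph{Main obstacle.} The crux is the estimate of Step~3 — that a suitably chosen horseshoe already undercuts $M_r$ by at least $(R-5r)/2$, uniformly in $M$ and degenerating to $0$ exactly at the threshold $r=R/5$; this is an explicit (though not deep) computation with the lower curvature bound, morally the same one that drives Theorem~\ref{theo1}. A secondary difficulty is the bookkeeping of Step~2: one must confirm that the split of the proof of Theorem~\ref{theo1} into local and global reductions is exhaustive, so that the list of non-horseshoe local minimizers is complete and every entry has length at least $\H(M_r)$.
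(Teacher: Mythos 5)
Your strategy breaks down at two points, and the second one is fatal quantitatively. First, Step~2: the claim that after the local reductions the only surviving non-horseshoe configurations are ``an arc continued by a chord, a configuration with a branching point, or $M_r$ itself,'' each of length at least $\H(M_r)$, is not established and is partly false. The genuinely non-local ingredients of the proof of Theorem~\ref{theo1} are the bound $a_M(r)\le 2r$ on segments of $\Sigma_r$ (Lemma~\ref{global_lm}(iii)) and its uses in Lemmas~\ref{hex} and~\ref{compcon}; for a local minimizer these fail a priori, so $\hat\Sigma$ may contain long chords and Steiner points in $\Int(N_r)$, and nothing in your argument confines such sets to your three types or bounds their length from below by $\H(M_r)$. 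Moreover $M_r$ is \emph{not} a local minimizer: removing from $M_r$ an arc of angular half-width $\phi$ and attaching at the two ends tangent segments of length $\approx R\sin\phi-\sqrt{r^{2}-(R\cos\phi-R+r)^{2}}$ keeps $M$ covered and shortens the set by $2\phi(\sqrt{rR}-r)+o(\phi)>0$, with symmetric difference of arbitrarily small diameter, contradicting Definition~\ref{local}.

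Second, and decisively, Step~3 cannot produce the constant $(R-5r)/2$. For $M=\partial B_R(O)$ with $r\ll R$ the optimal horseshoe differs from $M_r$ only near the two tangent segments, and the maximal saving $2\bigl(\phi(R-r)-R\sin\phi+\sqrt{r^{2}-(R\cos\phi-R+r)^{2}}\bigr)$ over admissible $\phi$ is of order $2r$; hence $\H(M_r)-\H(\Sigma)\approx 2r\ll (R-5r)/2$ as soon as, say, $R>9r$, so no horseshoe $\Sigma_0$ with $\H(\Sigma_0)\le\H(M_r)-(R-5r)/2$ exists. Even granting your Step~2, the comparison against $\H(M_r)$ can only yield a gap of order $r$, not $(R-5r)/2$. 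The paper obtains the constant by a different, bootstrap mechanism: assume $\H(\hat\Sigma)-\H(\Sigma)<(R-5r)/2$ and set $a'_M(r):=2r+\H(\hat\Sigma)-\H(\Sigma)$; if $\hat\Sigma_r$ contained a segment of length at least $a'_M(r)$, the excision-and-reconnection of Lemma~\ref{global_lm}(iii) (the reconnecting segment has length at most $2r$) would give a connected competitor covering $M$ of length at most $\H(\Sigma)$, contradicting the assumed gap; hence all segments are shorter than $a'_M(r)$, and since $2a'_M(r)+r<R$, Lemmas~\ref{hex} and~\ref{compcon} hold with $a'_M$ in place of $a_M$, after which every remaining step of the proof of Theorem~\ref{theo1} is a local surgery valid for $\hat\Sigma$, forcing $\hat\Sigma$ to be a horseshoe. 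The threshold $(R-5r)/2$ is exactly the slack in $2(2r+\delta)+r\le R$, not a length comparison with $M_r$; your proposal needs this idea (or an equivalent control of long segments in a local minimizer) to go through.
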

    
    \begin{figure}
\begin{center}
\definecolor{ffqqqq}{rgb}{1.,0.,0.}
\definecolor{uuuuuu}{rgb}{0.26666666666666666,0.26666666666666666,0.26666666666666666}
\definecolor{wwzzqq}{rgb}{0.4,0.6,0.}
\begin{tikzpicture}[line cap=round,line join=round,>=triangle 45,x=1.0cm,y=1.0cm]
\clip(-2.0746840397001493,-2.2681302319585696) rectangle (14.457332526577204,2.56210111714977153);
\draw [line width=1.6pt,color=wwzzqq] (11.88,2.)-- (0.,2.);
\draw [line width=1.6pt,color=wwzzqq] (0.,-2.)-- (11.88,-2.);

\draw [shift={(0.,0.)},line width=1.6pt,color=wwzzqq]  plot[domain=1.5707963267948966:4.71238898038469,variable=\t]({1.*2.*cos(\t r)+0.*2.*sin(\t r)},{0.*2.*cos(\t r)+1.*2.*sin(\t r)});
\draw [shift={(11.88,0.)},line width=1.6pt,color=wwzzqq]  plot[domain=-1.5707963267948966:1.5707963267948966,variable=\t]({1.*2.*cos(\t r)+0.*2.*sin(\t r)},{0.*2.*cos(\t r)+1.*2.*sin(\t r)});

\draw (0.,-0.2)-- (11.88,-0.2);

\draw (3.3050673596736084,1.3940269438694433) -- (0, 0.2);
\draw [shift={(0.,0.)}] plot[domain=1.5707963267948966:4.71238898038469,variable=\t]({1.*0.2*cos(\t r)+0.*0.2*sin(\t r)},{0.*0.2*cos(\t r)+1.*0.2*sin(\t r)});
\draw [shift={(11.88,0.)}] plot[domain=-1.5707963267948966:1.5707963267948966,variable=\t]({1.*0.2*cos(\t r)+0.*0.2*sin(\t r)},{0.*0.2*cos(\t r)+1.*0.2*sin(\t r)});
\d
raw (0.,0.2)-- (3.3050673596736084,1.3940269438694433);
\draw (6.6949326403263925,1.3940269438694481)-- (11.88,0.2);
\draw [dash pattern=on 3pt off 3pt] (0.,0.2)-- (11.88,0.2);

\draw [line width=1.6pt,color=ffqqqq] (1.98 ,0.8568464670045404)-- (0.,2.);
\draw [line width=1.6pt,color=ffqqqq] (1.98,0.8568464670045404)-- (3.96,2.);
\draw [line width=1.6pt,color=ffqqqq] (1.98,0.8568464670045404)-- (1.98,-2.);

\draw [line width=1.6pt,color=ffqqqq] (5.94,0.8568464670045406)-- (5.94,-2.);
\draw [line width=1.6pt,color=ffqqqq] (5.94,0.8568464670045406)-- (7.92,2.);
\draw [line width=1.6pt,color=ffqqqq] (5.94,0.8568464670045406)-- (3.96,2.);

\draw [line width=1.6pt,color=ffqqqq] (9.90,0.8568464670045406)-- (9.90,-2.);
\draw [line width=1.6pt,color=ffqqqq] (9.90,0.8568464670045406)-- (11.88,2.);
\draw [line width=1.6pt,color=ffqqqq] (9.90,0.8568464670045406)-- (7.92,2.);

\draw [line width=1.6pt,color=ffqqqq] (13.88,0.)-- (11.88,2.);
\draw [line width=1.6pt,color=ffqqqq] (13.88,0.)-- (11.88,-2.);

\draw [line width=1.6pt,color=ffqqqq] (-2.,0.)-- (0.,2.);
\draw [line width=1.6pt,color=ffqqqq] (-2.,0.)-- (0.,-2.);

\draw (1.5436991891648444,2.680758446633948) node[anchor=north west] {$|BC| = 2r$};
\begin{scriptsize}
\draw[color=uuuuuu] (5.065720988074994,-0.485192722912954) node {$\Sigma_1$};
\draw[color=ffqqqq] (2.445720988074994,-1.185192722912954) node {$\Sigma'$};

\draw[color=wwzzqq] (5.065720988074994,-2.185192722912954) node {$M$};
\draw [fill=uuuuuu] (3.3050673596736084,1.3940269438694433) circle (1.5pt);
\draw [fill=uuuuuu] (6.6949326403263925,1.3940269438694481) circle (1.5pt);
\draw [fill=uuuuuu] (0.,2.) circle (1.5pt);
\draw [fill=uuuuuu] (11.88,2.) circle (1.5pt);
\draw [fill=uuuuuu] (7.92,2.) circle (1.5pt);
\draw [fill=uuuuuu] (9.90,-2.) circle (1.5pt);
\draw [fill=ffqqqq] (9.90,0.8568464670045406) circle (1.5pt);

\draw [fill=uuuuuu] (13.88,0.) circle (1.5pt);
\draw [fill=uuuuuu] (11.88,-2.) circle (1.5pt);
\draw [fill=uuuuuu] (0.,-2.) circle (1.5pt);
\draw [fill=uuuuuu] (-2., 0.) circle (1.5pt);

\draw [fill=uuuuuu] (11.88,0.) circle (1.5pt);
\draw[color=uuuuuu] (11.3,0.) node {$(t, 0)$};

\draw [fill=uuuuuu] (0.,0.) circle (1.5pt);
\draw[color=uuuuuu] (0.5,0.) node {$(0, 0)$};

\draw[color=uuuuuu] (0.1534274264371537,2.333190505952026) node {$B$};
\draw [fill=uuuuuu] (7.92,2.) circle (1.5pt);
\draw [fill=uuuuuu] (1.98,-2.) circle (1.5pt);
\draw [fill=uuuuuu] (5.94,-2.) circle (1.5pt);
\draw [fill=ffqqqq] (1.98,0.8568464670045404) circle (1.5pt);
\draw [fill=ffqqqq] (5.94,0.8568464670045406) circle (1.5pt);
\draw [fill=ffqqqq] (5.94,0.8568464670045406) circle (1.5pt);
\draw [fill=ffqqqq] (5.94,0.8568464670045406) circle (1.5pt);
\draw [fill=uuuuuu] (3.96,2.) circle (1.5pt);
\draw[color=uuuuuu] (4.115701950211072,2.333190505952026) node {$C$};
\end{scriptsize}
\end{tikzpicture}

\caption{The horseshoe $\Sigma_1$ and the better competitor $\Sigma'$ in the Example~\ref{stadium}.}

\label{stad}
\end{center}
\end{figure}
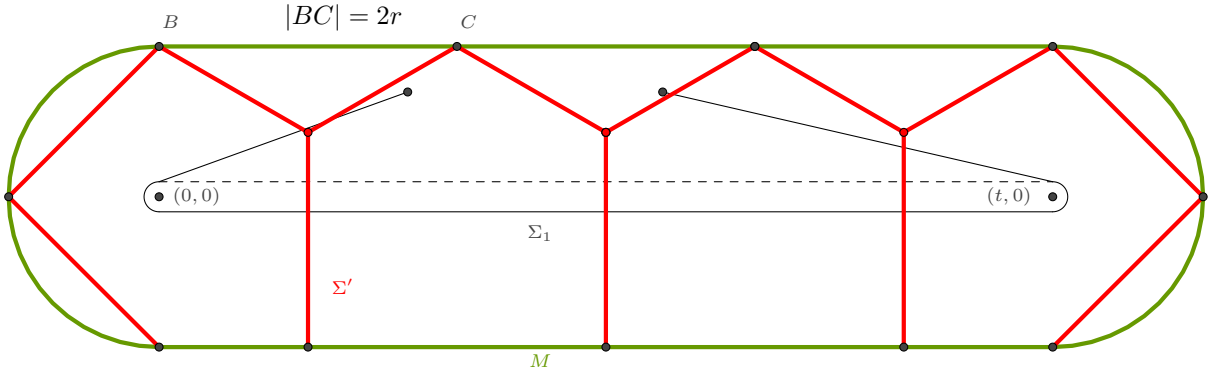
    
    It is worth mentioning that the claim of Theorem~\ref{theo1} does not hold without some assumptions on the dependence of $R$ on $r$, as the following example shows.
    \begin{example}[see Fig.~\ref{stad}] \label{stadium}
    Consider the stadium (see Fig.~\ref{stad}) $$N^t := \overline{\cup_{x \in [0,t] \times \{0\}} B_1 (x)},\ M^t := \partial N^t.$$
    Note that $M = M^t$ is the border of the stadium and has minimal radius of curvature $1$ for every $t$. 
    Let us choose $1 > r > 1 - \varepsilon$ for sufficiently small $\varepsilon$, and a sufficiently large $t$ such that $t/r \in 2\mathbb{N}$. Clearly, any horseshoe has length $2t - O(1)$ as $t \to \infty$. 
Consider the points $x_{2k} = (2kr, 1), x_{2k + 1} = ((2k+1)r, -1) \in M^t$ for $k = 0, 1, \dots, t/(2r)$. Let $X$ be the union of the sets $\St (x_{2k}, x_{2k+1}, x_{2k+2})$ for $k = 0, 1, \dots, t/(2r) - 1$. Every such tree is a tripod; its length tends to $2 + \sqrt{3}$ when $\varepsilon \to 0+$. 
Note that 
$$\Sigma' := X \cup [(0, 1), (-1, 0)] \cup [(-1, 0), (0, -1)] \cup [(t, 1), (t + 1, 0)] \cup [(t + 1, 0), (t, -1)]$$
is connected and $M^t \subset \overline{B_r(\Sigma')}$
(it is true, because $\dist (x_i, x_{i+2}) = 2r$, so the set $\{x_i\}$ covers horizontal lines of $M^t$; four additional segments cover semicircles of $M^t$).
The length of $X$ is $(2 + \sqrt{3} + o_\varepsilon(1))\frac{t}{2r} + 4\sqrt{2} \leq (1 + \frac{\sqrt{3}}{2} + o_\varepsilon(1)) \frac{t}{r} < 2t - O_t(1)$, and therefore one can choose $\varepsilon$ sufficiently small and $t$ sufficiently large such that $\Sigma'$ is a better competitor than a horseshoe, hence a horseshoe is not a global minimizer for $M^t$.
\end{example}

\subsection{The outline of the proof} \label{op}

Here is the sketch of the proof of Theorem~\ref{theo1}. 
Let us recall that in~\cite{PaoSte04max} the following statement is proven.

\begin{lemma}
Let $M \subset \mathbb{R}^2$ be a compact set, $\Sigma \in OPT_{\infty}^{*}(M)$.
Then $\Sigma$ has no loops.    
\label{cycle}
\end{lemma}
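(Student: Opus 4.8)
We argue by contradiction. Suppose $\Sigma \in OPT_{\infty}^{*}(M)$ contains a loop, i.e.\ a subset $C$ homeomorphic to $S^1$. Since $\H(\Sigma) < +\infty$, $C$ is a rectifiable Jordan curve of positive finite length, and $\R^2 \setminus C$ has a bounded component $D$ with $\partial D = C$. The plan is to produce a closed connected competitor $\Sigma'$ with $F_M(\Sigma') \le r$ and $\H(\Sigma') < \H(\Sigma)$, which contradicts the minimality of $\Sigma$.

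The first observation is that no point of $C$ is a cut point of $\Sigma$: for every $x \in C$ the set $C \setminus \{x\}$ is an arc, hence connected, and since the closure of every connected component of $\Sigma \setminus C$ meets $C$, the set $\Sigma \setminus \{x\}$ is connected. Using $\H(\Sigma) < +\infty$ — so that $\Sigma \setminus C$ has at most countably many connected components and at most countably many branch points — one can choose a ``clean'' point $x_0 \in C$ and $\rho_0>0$ such that for all $0<\rho<\rho_0$ the intersection $\Sigma \cap \overline{B_\rho(x_0)}$ is a single arc of $C$ with both endpoints on $\partial B_\rho(x_0)$ and $\Sigma_\rho := \overline{\Sigma \setminus B_\rho(x_0)}$ is connected; almost every point of $C$ that is a density-$1$ point, is not a self-approach point of $C$, and is not a limit of branch points works. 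For such $x_0$ and small $\rho$, $\Sigma_\rho$ is closed, connected, with $\H(\Sigma_\rho) = \H(\Sigma) - \H(\Sigma \cap B_\rho(x_0))$; moreover, since $\Sigma$ is connected and not contained in a small ball, $\Sigma \cap \partial B_\rho(x_0) \ne \emptyset$, whence $\dist(y,\Sigma_\rho) \le \dist(y,\Sigma) + 2\rho$ for all $y \in M$. Thus $F_M(\Sigma_\rho) \le r + 2\rho$, and the set $Y_\rho := \{ y \in M : \dist(y,\Sigma_\rho) > r\}$ lies in the $(r+\rho)$-neighbourhood of $x_0$ and consists of points which, in $\Sigma$, were covered only by the removed arc $\Sigma \cap B_\rho(x_0)$.

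It remains to repair the coverage cheaply. Let $a_\rho,b_\rho$ denote the two endpoints of the removed arc on $\partial B_\rho(x_0)$ (both belonging to $\Sigma_\rho$), and set $\Sigma' := \Sigma_\rho \cup P_\rho$, where $P_\rho$ is a connected patch attached at $a_\rho$ (or $b_\rho$) chosen so that $Y_\rho \subset \overline{B_r(\Sigma')}$; then $\Sigma'$ is automatically closed, connected, and satisfies $F_M(\Sigma') \le r$. Everything comes down to the strict inequality
\[
\H(\Sigma') = \H(\Sigma) - \H(\Sigma \cap B_\rho(x_0)) + \H(P_\rho) < \H(\Sigma)
\]
for a suitable small $\rho$. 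This is the step where the presence of the loop is essential — near $x_0$ the curve $C$ traverses $B_\rho(x_0)$ ``in both directions'', so after the cut only one of these directions needs to be reproduced by the patch $P_\rho$, while the removed arc realized both — and it is the main obstacle of the proof: one must control $Y_\rho$ precisely and exhibit a patch $P_\rho$ strictly shorter than the removed arc $\Sigma \cap B_\rho(x_0)$, using the Jordan curve structure of $C$ together with the finiteness and local structure of $\Sigma$. (If a purely local surgery turns out to be too delicate, the same contradiction can be reached by a global modification: first push $\Sigma$ into $\overline{\co\,(B_r(M))}$ without increasing its length, and then reroute the loop there.) Once the strict inequality is secured, $\Sigma'$ is a closed connected set with $F_M(\Sigma') \le r$ and $\H(\Sigma') < \H(\Sigma)$, contradicting $\Sigma \in OPT_{\infty}^{*}(M)$; hence $\Sigma$ has no loops.
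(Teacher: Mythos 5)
The paper itself does not prove this lemma: it is quoted from Paolini--Stepanov \cite{PaoSte04max}, so your argument has to stand on its own, and it does not — the decisive step is missing, and you say so yourself. Everything before the displayed inequality is routine bookkeeping: cutting a short arc of the loop out of $\Sigma$ near a ``clean'' point keeps the competitor closed and connected and raises the energy by at most $2\rho$. The entire content of the lemma is the claim that the lost coverage can be restored by a connected patch \emph{strictly} shorter than the removed arc, and this you leave as ``the main obstacle''. The heuristic offered (``the arc traverses $B_\rho(x_0)$ in both directions, only one needs to be reproduced'') does not close the estimate: the removed arc has length about $2\rho$, while the uncovered set $Y_\rho$ consists of points of $M$ at distance $>r$ from $\Sigma_\rho$ but $\le r$ from the removed arc, and a priori such points can sit on \emph{both} sides of the arc, near $x_0\pm(r+\delta)n$ with $\delta$ up to order $\rho$ ($n$ normal to the tangent direction at $x_0$); each such point is at distance $>r$ from both endpoints $a_\rho,b_\rho$. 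A connected patch attached at $a_\rho$ that comes within $r$ of points in both of these regions must make transverse excursions of order $\rho$ in two opposite directions, i.e.\ its length is comparable to $2\rho$ at leading order. Whether the surgery gains or loses is therefore decided by lower-order terms that you never estimate, and making it work would require quantitatively relating the length of the removed arc to the geometry of $Y_\rho$ and choosing the cut point by some averaging argument along the loop — in other words, the actual proof. This is exactly why acyclicity is trivial for Steiner trees (only connectivity matters, so cutting a loop costs nothing) and is a substantive theorem for maximal distance minimizers, where the constraint is coverage; the cited proof in \cite{PaoSte04max} is a genuinely different and more involved argument and not a local cut-and-patch in a ball of radius $\rho$.

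Two secondary points. First, the choice of the ``clean'' point $x_0$ is also unjustified: you need that $\H$-a.e.\ point of $C$ has a ball in which $\Sigma$ coincides with a single sub-arc of $C$, but the closures of the (countably many) components of $\Sigma\setminus C$ could in principle accumulate on a subset of $C$ of positive measure, so this requires an argument or a different cutting procedure. Second, the parenthetical fallback (``push $\Sigma$ into $\overline{\co (B_r(M))}$ and reroute the loop there'') is a wish, not a proof. As written, the proposal identifies the right difficulty but does not resolve it, so it does not establish the lemma.
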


In the sequel the union of the closures of all connected components of $\Sigma \cap \Int (N_r)$ is denoted by $\Sigma_r$.

    \begin{lemma}  \label{global_lm}
    Let $M$ be a convex closed curve with minimal radius of curvature $R$ 
and $\Sigma \in OPT_{\infty}^{*}(M, r)$ be a minimizer with the energy $r < R$. Then the following assertions hold.
    \begin{itemize}
    \item[(i)] The closure of every connected component of $\Sigma \cap \Int(N_{r})$ is a solution of the Steiner problem for some set of points belonging to $M_{r}$, and in particular consists only of line segments of positive length.
    \item[(ii)] $\Sigma$ consists of arcs of $M_r$ (possibly degenerate) and line segments of positive length with disjoint interiors. 
    \item[(iii)] The length of each line segment in $\Sigma_r$
does not exceed $a_M(r)$ for some $a_M(r)\leq 2r$. For the circumference $\partial B_R(O)$ one can take $a_{\partial B_R(O)} (r) = 2r\sqrt{1 - \frac{r^2}{4R^2}}$.  
    \end{itemize}
    \end{lemma}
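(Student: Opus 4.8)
\textbf{Proof proposal for Lemma~\ref{global_lm}.}

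The plan is to exploit the two structural facts we already have: the characterization of the non-energetic (Steiner) part from property (b) of~\cite{mir}, and the absence of loops (Lemma~\ref{cycle}). For part (i), consider a connected component $K$ of $\Sigma \cap \Int(N_r)$; its closure $\overline K$ meets $M_r = \partial N_r$ in some set of points, call it $A \subset M_r$. Every point of $K$ lies strictly inside $N_r$, hence strictly more than distance $r$ from $M \supset \partial N$, so no point of $\Int(N_r)\cap\Sigma$ can be energetic (property (a) says an energetic point $x$ has a point $Q_x \in M$ with $\dist(x,Q_x)=r$, impossible if $x \in \Int(N_r)$). Thus $K \subset S_\Sigma$, and by property (b) near every interior point $K$ is locally a segment or a regular tripod; combined with Lemma~\ref{cycle} (no loops) and finiteness of $\H(\Sigma)$, $\overline K$ is a finite topological tree with all leaves on $M_r$ made of straight segments meeting at $2\pi/3$. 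The first real point to check is that $\overline K$ is in fact \emph{length-minimizing} for its leaf set $A$: if some competitor Steiner tree $T$ for $A$ inside $\overline{N_r}$ were shorter, replacing $K$ by $T$ in $\Sigma$ would keep $\Sigma$ connected and would not increase the energy (since $T \subset \overline{N_r}$ still covers at least as well — actually $T$ lies in the $r$-neighborhood of nothing new, need to argue the covered arcs of $M$ are unchanged because they were covered by the $M_r$-arcs and by energetic points outside $K$, not by $K$'s interior). This gives a shorter connected competitor, contradicting minimality; hence $\overline K$ solves the Steiner problem for $A$, and by the known structure of Steiner trees it consists of finitely many positive-length segments.

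For part (ii), decompose $\Sigma = (\Sigma \cap \Int(N_r)) \sqcup (\Sigma \cap M_r) \sqcup (\Sigma \setminus \overline{N_r})$. The first piece is a union of the trees from (i), all segments. For $\Sigma \setminus \overline{N_r}$: a point here is at distance $<r$ from $\partial N \subset M$, so it cannot be energetic either by the same argument, hence again lies in $S_\Sigma$ and is locally a segment or tripod; the same minimality/loop-freeness argument shows each component is a Steiner tree (with leaves on $M_r$), so again a union of segments. Finally $\Sigma \cap M_r$ is a closed subset of the convex curve $M_r$, hence a countable union of closed arcs and isolated points; the isolated points and the arc-endpoints are where segments attach. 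The only subtlety is the degenerate case flagged in the notation section: when $M_r$ contains a straight sub-segment $[BC]$ that is simultaneously an arc and a chord — we handle it by the stated convention (treat it as an arc if $]BC[$ contains an energetic point, as a chord otherwise), which makes the decomposition well-defined with disjoint interiors. All segments produced have positive length because Steiner trees have no degenerate edges, and positive-length arcs are arcs by construction.

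For part (iii), let $[PQ] \subset \Sigma_r$ be a segment of a component-closure from (i), with $P, Q \in M_r$ (after subdividing at Steiner points, every edge of the Steiner tree has both endpoints either on $M_r$ or at branch points which are themselves joined to $M_r$; the extremal edges have an endpoint on $M_r$, and one shows all of them do, since a branch point has degree $3$ inside a tree with leaves on $M_r$). We bound $|PQ|$ by a chord-length estimate for the convex curve $M_r$, whose minimal radius of curvature is at least $R - r$. The chord between two points of a convex curve with minimal radius of curvature $\ge \rho$ that subtends a short arc has length at most... well, more directly: a chord $[PQ]$ of $M_r$ that lies entirely in $\overline{N_r}$ and whose endpoints are the only intersections with $M_r$ — the arc it cuts off has a supporting circle of radius $\ge R-r$ on the inside, so $|PQ| \le 2(R-r)\sin(\theta/2)$ where $\theta$ is the arc's angular measure... this alone does not give a bound by $2r$. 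The cleaner route: the segment $[PQ]$ together with the arc of $M_r$ it subtends bounds a region, and the covering/minimality forces this arc to be "small" — specifically, if $[PQ]$ were too long, the two energetic-point conditions at $P$ and $Q$ (points $Q_P, Q_Q \in M$ at distance $r$) plus convexity would let us shortcut. I expect the honest bound to come from: any segment of $\Sigma_r$ has length $\le \diam(N_r \cap B_r(\partial N_r))$-type quantity, and for the disk one computes directly that a chord of $\partial B_{R-r}(O)$ which stays within distance... Actually the stated value $2r\sqrt{1 - r^2/(4R^2)}$ is exactly the length of a chord of the circle $\partial B_{R-r}(O)$ whose endpoints are at distance $2r$... no — it is the length of a chord $[PQ]$ of $M_r = \partial B_{R-r}(O)$ such that $P$ and $Q$ can be simultaneously "useful," i.e., $|PQ| = 2\sqrt{r^2 - (r^2/(2R))^2}$ after the geometry; the point is that a segment longer than this would have its midpoint too far from $M$, so that nearby there is an uncovered point of $M$ that a rearrangement could cover more cheaply. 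So the main obstacle, and the place where I would spend the most care, is precisely step (iii): pinning down the correct geometric quantity $a_M(r)$ and proving that any longer segment in $\Sigma_r$ contradicts minimality, via an explicit local surgery (replace a long segment and the arc it subtends by a cheaper connected configuration that still covers the relevant arc of $M$), with the circle computation done by elementary trigonometry on $\partial B_{R-r}(O)$.
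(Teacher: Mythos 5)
The decisive gap is in part (iii), which you do not actually prove: you explicitly defer it (``the place where I would spend the most care''), and the routes you sketch --- curvature/chord estimates on $M_r$, or a ``midpoint too far from $M$'' heuristic --- are not the mechanism that gives the bound, while your guesses at the geometric meaning of $2r\sqrt{1-\frac{r^2}{4R^2}}$ are off. The paper's argument is a cut-and-reconnect argument that your proposal never finds: delete an open segment $\Delta\subset\Sigma\cap\Int(N_r)$; since every point of $\Int(N_r)$ is at distance greater than $r$ from $M$, this does not change $F_M$; by the absence of loops (Lemma~\ref{cycle}) the set $\Sigma\setminus\Delta$ splits into exactly two closed connected components $\Sigma_1,\Sigma_2$, and $M\subset\overline{B_r(\Sigma_1)}\cup\overline{B_r(\Sigma_2)}$; since $M$ is connected, some $A\in M$ lies in both closed $r$-neighbourhoods, so there are $B\in\Sigma_1$, $C\in\Sigma_2$ with $|AB|\le r$, $|AC|\le r$, whence $\dist(\Sigma_1,\Sigma_2)\le|BC|\le 2r$; finally, by minimality the deleted segment cannot be longer than $\dist(\Sigma_1,\Sigma_2)$, since otherwise one could reconnect $\Sigma_1$ to $\Sigma_2$ by a shorter segment. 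For $M=\partial B_R(O)$ one maximizes $|BC|$ over $B,C\in \overline{B_R(O)}\cap\overline{B_r(A)}$ with $A\in M$: the maximum is attained when $B,C$ are the two intersection points of $\partial B_r(A)$ with $\partial B_R(O)$, and elementary trigonometry in the isosceles triangle $AOC$ gives $|BC|=2r\sqrt{1-\frac{r^2}{4R^2}}$. Without this deletion/reconnection idea the statement (iii), which is exactly what feeds the constants $R>5r$ and $R>4.98r$ later, is unproved.

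There is also a genuine error in your part (ii): you assert that a point of $\Sigma\setminus\overline{N_r}$ ``cannot be energetic by the same argument''. That is false: being at distance less than $r$ from the nearest point of $M$ does not prevent the existence of another point $Q_x\in M$ with $\dist(x,Q_x)=r$ and $B_r(Q_x)\cap\Sigma=\emptyset$; indeed the endpoints of the tangent segments of a horseshoe are precisely such energetic points lying in $N\setminus N_r$. Consequently your further claim that each component of $\Sigma\setminus\overline{N_r}$ is a (globally minimal) Steiner tree for its attachment points is also wrong --- replacing such a component by a Steiner tree of its entry points can uncover part of $M$ and increase $F_M$, which is why the paper treats these components only as locally minimal networks much later (Lemma~\ref{compcon}). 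The correct input for (ii) is Lemma~\ref{en_sub}: the non-isolated energetic points lie on $M_r$, the isolated ones form a discrete set, and property (b) handles $S_\Sigma$; the substitution argument is legitimate only inside $\Int(N_r)$. Your part (i) is essentially the paper's substitution argument and is fine.
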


Note that we do not show in this Lemma that the number of line segments in $\Sigma$ is finite.

A statement similar to Lemma~\ref{global_lm} may be proven for $M$ being a boundary of a not necessarily convex set, but we restrict the statement to the convex case to avoid excessive technicalities.

    \begin{lemma}\label{hex}
    Let $M$ be a closed convex curve with minimal radius of curvature
$R > 2a_M(r) + r$, where $a_M$ is such that the length of each line segment in $\Sigma_r$
does not exceed $a_M(r)$ (in particular one can take $a_M$ as in Lemma~\ref{global_lm}),
$\Sigma \in OPT_{\infty}^{*}(M, r)$. 
Then $\Sigma$ has no Steiner point in $\Int (N_r) \cup (S_\Sigma \cap M_r)$.
Thus $\Sigma \cap \Int(N_r)$ consists of disjoint interiors of chords of $M_r$.
    \end{lemma}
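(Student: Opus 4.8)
The plan is to argue by contradiction, converting the existence of a branching point into a configuration of three points of $\Sigma$ lying on $M_r$ that the curvature hypothesis forbids. So suppose $\Sigma$ has a Steiner point $v\in\Int(N_r)\cup(S_\Sigma\cap M_r)$; note $R>2a_M(r)+r>r$, so Lemma~\ref{global_lm} is at our disposal.

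The geometric core is the following estimate. Let $A,B,C\in M_r$ with $B$ on the shorter sub-arc of $M_r$ joining $A$ and $C$, and suppose $|AC|\le 3a_M(r)$. Since $M_r$ is convex with minimal radius of curvature at least $R-r>2a_M(r)$, that sub-arc is a convex arc of curvature at most $1/(R-r)$, hence contained in the lens bounded by the chord $[AC]$ and the minor circular arc of radius $R-r$ through $A$ and $C$; the inscribed angle theorem then gives
\[
\angle ABC\ \ge\ \pi-\arcsin\!\Big(\frac{|AC|}{2(R-r)}\Big)\ \ge\ \pi-\arcsin\tfrac34\ >\ \tfrac{2\pi}{3}.
\]
In particular no point $y$ can satisfy $\angle AyB=\angle ByC=\angle CyA=2\pi/3$, since such a $y$ would be the Fermat point of a triangle whose angle at $B$ exceeds $2\pi/3$.

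Now localize. If $v\in S_\Sigma\cap M_r$: at least two of the three $2\pi/3$-separated segments of the tripod at $v$ enter $\Int(N_r)$ — three directions $2\pi/3$ apart cannot all lie in the cone of directions pointing into the convex set $N_r$ at $v$, which (as $M_r$ is $C^{1,1}$) is a half-plane; the sub-case of a single such arm, involving the two arms that leave $N_r$, is handled by a parallel argument. Two such arms are chords of $M_r$ of length $\le a_M(r)$ ending at $P_1,P_2\in M_r$ with $\angle P_1vP_2=2\pi/3$; one checks that $v$ lies on the sub-arc of $M_r$ between $P_1$ and $P_2$ and that $|P_1P_2|\le\sqrt3\,a_M(r)\le 3a_M(r)$, so the estimate forces $\angle P_1vP_2>2\pi/3$, a contradiction. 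If $v\in\Int(N_r)$: by Lemma~\ref{global_lm}, $v$ lies in the closure $T$ of a component of $\Sigma\cap\Int(N_r)$, a Steiner tree for at least three terminals on $M_r$ all of whose edges have length $\le a_M(r)$. Removing the degree-one vertices of $T$ leaves a tree whose every vertex had degree three in $T$; a leaf of that tree is a branching point $y$ of $T$ adjacent to two terminals $P_1,P_2\in M_r$, so $|yP_1|,|yP_2|\le a_M(r)$, $\angle P_1yP_2=2\pi/3$, $|P_1P_2|\le\sqrt3\,a_M(r)$. If the third arm of $y$ ends at a terminal $P_3$ then $T=\St(P_1,P_2,P_3)$ and $\angle P_1yP_2=\angle P_2yP_3=\angle P_3yP_1=2\pi/3$ directly contradicts the estimate; otherwise one pushes the analysis along the chain of branching points of $T$, using the Melzak/contraction description of Steiner minimal trees to keep track of the equivalent terminals and the arm directions, arriving in every case either at a terminal-only tripod as above or at three terminals $P_1,P_2,P_3\in M_r$ pairwise within $3a_M(r)$ whose connecting subtree of $T$ carries a branching vertex of the kind the estimate forbids. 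Either way we reach a contradiction — directly with the angle conditions for Steiner minimal trees, or by observing that replacing the offending regular tripod by the two-segment path through the sub-arc-middle terminal yields a strictly shorter connected competitor that still covers $M$ (the deleted tripod contributed to the covering of $M$ only the points $Q_{P_1},Q_{P_2},Q_{P_3}$, every other of its points lying in $\Int(N_r)$ at distance $>r$ from $M$, while $Q_{P_i}$ is the unique point of $M$ nearest $P_i$ and is still covered by the path) — contradicting minimality of $\Sigma$.

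Hence $\Sigma$ has no Steiner point in $\Int(N_r)\cup(S_\Sigma\cap M_r)$, and then by Lemma~\ref{global_lm}(i)--(ii) each connected component of $\Sigma\cap\Int(N_r)$ is a branching-free Steiner minimal tree with terminals on $M_r$, i.e.\ the interior of a single chord of $M_r$, these interiors being pairwise disjoint because $\Sigma$ has no loops (Lemma~\ref{cycle}). For $M=\partial B_R(O)$ the comparison arc is the osculating circle itself and every inequality above is explicit, which is what permits the sharper threshold $r<R/4.98$. The main obstacle is the localization when $v\in\Int(N_r)$ and $T$ is larger than a single tripod: one must tame a possibly long ``caterpillar'' of regular tripods so as to extract three terminals on $M_r$ that are uniformly close together (this is exactly where $R-r>2a_M(r)$ enters), all the while controlling which arcs of $M$ the modified portion of $\Sigma$ was responsible for covering; the curvature estimate itself is elementary.
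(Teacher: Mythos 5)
Your curvature estimate (points of $M_r$ on a short sub-arc see a short chord at an angle exceeding $2\pi/3$) is fine as far as it goes, and it does kill the configuration of a single regular tripod all three of whose legs end at terminals on $M_r$. But that is not the hard case, and the step where you dispose of the general case --- ``otherwise one pushes the analysis along the chain of branching points of $T$, using the Melzak/contraction description \dots arriving in every case either at a terminal-only tripod as above or at three terminals \dots whose connecting subtree of $T$ carries a branching vertex of the kind the estimate forbids'' --- is an assertion, not an argument. Your estimate only forbids a Steiner point adjacent to \emph{three} terminals forming a triangle with an angle $\geq 2\pi/3$; a Steiner point adjacent to two terminals and one further branching point sees its two terminals at exactly $2\pi/3$, which contradicts nothing, and for a subtree with several branching points no forbidden configuration is produced by the estimate. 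Moreover, your extraction of a branching point adjacent to two terminals (``remove the degree-one vertices and take a leaf of what remains'') tacitly assumes the tree is finite; at this stage of the paper finiteness of the number of segments is explicitly \emph{not} known (Steiner points may a priori accumulate towards $M_r$, e.g.\ a two-way infinite caterpillar, for which the leaf-removal argument finds no such vertex). A similar gap occurs in your $v\in S_\Sigma\cap M_r$ case, where you assume the two arms entering $\Int(N_r)$ end on $M_r$; they may instead end at interior branching points, which throws you back to the unresolved interior case.

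The paper closes exactly this hole by a global mechanism your sketch lacks: it takes the Steiner point $X_0$ of $\Sigma_r\cup(S_\Sigma\cap M_r)$ nearest to the centre $O$ of an inscribed ball of radius $R$, shows $|OX_0|\leq a_M(r)$ and $X_0\in\Int(N_r)$, encloses the subsequent edges in a regular hexagon of side $a_M(r)$ containing $O$ and contained in $\Int(N_r)$ (this is where $R-r>2a_M(r)$ is used), and then follows the left-turning and right-turning branches emanating from $X_0$: since every vertex reached stays in the hexagon, hence in $\Int(N_r)$, it must again be a Steiner point, and the two paths force either a loop or an endpoint of $\Sigma$ in $\Int(N_r)$, both impossible. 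Without some substitute for this ``trapping'' argument (one that works for an arbitrary, possibly infinite, short-edged Steiner subtree of $N_r$), your proof does not establish the lemma; the concluding reduction of $\Sigma\cap\Int(N_r)$ to disjoint interiors of chords is fine once the non-existence of interior Steiner points is actually proved.
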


Let us consider the set of the closures of connected components of $\Sigma \setminus N_r$. Denote it by $V_C(G)$ (further it will be associated with a subset of the vertex set of a graph). Note that $\Sigma$ is connected (and does not reduce to a single point), so every $S \in V_C(G)$ has positive length. In our setting $M$ is compact, thus every $\Sigma \in OPT_{\infty}^{*}(M, r)$ has finite length, hence the set $V_C(G)$ is at most countable. 

Consider an arbitrary $S \in V_C(G)$. Note that by connectedness of $S$ the set $\overline {B_r (S)} \cap M$ is always a closed arc. We denote it by $q_S$. 

Consider the set of all maximal arcs of $M_r$ in the set $\Sigma$, which are not contained in the closure of a connected component of $\Sigma \setminus N_r$. 
Let us denote by $V_A(G)$ the subset of such arcs having an energetic point in their interior.  
Note that if $M$ is not strictly convex, then an arc $\breve{[BC]}$ of $M_r$ can be a chord of $M_r$.
In this situation if $\breve{]BC[}$ has no energetic point then we will consider it as a chord of $M_r$:  note that if $\Sigma \setminus \breve{]BC[}$ does not cover $Q_x \in M$ for some $x \in ]BC[$, then $x$ is energetic; thus if $\breve{]BC]}$ has no energetic point then $[BC] = \breve{[BC]}$ has all the properties of a standard chord of $M_r$.

Obviously, an arc $\breve{[BC]} \in V_A(G)$ of $M_r$ covers an arc $q_{\breve{[BC]}} \defeq \breve{[Q_B Q_C]}$ of $M$, where $Q_B, Q_C \in M$ are the unique points such that $\dist (B, Q_B) = \dist (C, Q_C) = r$. 

\begin{definition}
Let $M$ be a closed convex curve with minimal radius of curvature
$R > r$, $\Sigma \in OPT_{\infty}^{*}(M, r)$. Let $S \in V_C(G)$, a closure of a connected component of $\Sigma \setminus N_r$.

\begin{itemize}
\item[(i)] Denote by $n(S)$ the number of energetic points in $S$.   

\item[(ii)] A point $x \in S \cap M_r$ is called an \emph{entering point}.
Denote the number of entering points of $S$ by $m(S)$.
\end{itemize}
\end{definition}

The following lemma says in particular that $n(S)$, $m(S)$ are finite.

\begin {lemma}
Let $M$ be a closed convex curve with minimal radius of curvature $R > 2a_M(r) + r$, $\Sigma \in OPT_{\infty}^{*}(M, r)$. 
Let $S$ be the closure of a connected component of $\Sigma \setminus N_r$. Then $n(S) \leq 2$, $m(S) \leq 2$. 
Further, $S$ is a locally minimal network connecting the set of entering points of $S$ and energetic points of $S \setminus M_r$.
\label{compcon}
\end {lemma}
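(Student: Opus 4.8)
The plan is to bootstrap from the local structure already available — properties (a), (b), Lemma~\ref{cycle}, and Lemmas~\ref{global_lm} and~\ref{hex} — and to finish with a surgery that exploits the curvature gap $R>2a_M(r)+r$. First I fix the ambient geometry. One has $\Sigma\subseteq N$: projecting $\Sigma$ onto the convex set $N$ gives a connected set of no larger length (the projection is $1$-Lipschitz) with $F_M$ no larger (the projection fixes $M$ pointwise), so a minimizer already lies in $N$. Hence $S\subseteq N\setminus\Int N_r=\{x\in N:\dist(x,M)\le r\}$, a closed annular region of thickness $r$ whose inner boundary $M_r$ has radius of curvature at least $R-r$. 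By Lemma~\ref{cycle} the set $\Sigma$, hence the connected set $S$, has no loops, so $S$ is a topological tree; since a nondegenerate arc of $M_r$ lies in $N_r$ while $S\setminus M_r$ does not meet $N_r$, the set $S$ contains no nondegenerate arc of $M_r$, and by Lemma~\ref{global_lm}(ii) it is a union of line segments with pairwise disjoint interiors, each having at most one endpoint on $M_r$ (two endpoints on $M_r$ would give a chord of $M_r$, whose interior lies in $\Int N_r$; the exceptional case of a segment of $M_r$ that is simultaneously a chord is handled by the convention of Section~2). Every vertex of $S$ at which the local shape is not a plain segment is therefore a Steiner point (degree $3$, angles $2\pi/3$), an energetic point, or an entering point, and a first-variation argument — deleting a small neighbourhood of a non-energetic endpoint yields a strictly shorter admissible set — shows that every endpoint of $S$ off $M_r$ is energetic.

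Next I would prove the locally minimal network statement. Put $A_S:=(S\cap M_r)\cup\{\text{energetic points of }S\setminus M_r\}$. At $x\in S\setminus A_S$ — a non-energetic point of $\Sigma$ not on $M_r$ — property (b) gives a neighbourhood $U$ in which $S$ coincides with $\Sigma$ and is a segment or a regular tripod, exactly the local shape of a Steiner tree for $S\cap\partial U$. At $x\in A_S$ one uses minimality of $\Sigma$ directly: replacing $S\cap B$, for a small ball $B\ni x$, by any strictly shorter connected set with the same trace on $\partial B$ still passing through $x$ produces an admissible competitor of smaller length — admissibility survives because, for $B$ small, the only points of $M$ not already covered with strict slack by $\Sigma\setminus B$ accumulate at $Q_x$, and these stay covered through the unmoved point $x$ when $x$ is energetic, or through the arc/chord of $\Sigma$ incident to $x$ when $x\in M_r$. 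Hence $S\cap\overline B$ is already length-minimal for its boundary data, i.e.\ $S$ is a locally minimal network connecting $A_S$.

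The heart of the lemma, and the step I expect to be the main obstacle, is $m(S)\le2$ and $n(S)\le2$. Suppose $S$ has three entering points; by convexity of $M_r$ they occur in a cyclic order $e_1,e_2,e_3$, and one of the two arcs of $M_r$ between $e_1$ and $e_3$ contains $e_2$ — call it $\breve{[e_1e_2e_3]}$. Removing the interior of the path from $e_1$ to $e_3$ in the tree $S$ disconnects $\Sigma$ into a piece $C_1\ni e_1$ and a piece $C_3\ni e_3$; the excised subtree (that path together with the branch reaching $e_2$) is a connected subset of the exterior of the convex set $N_r$ joining $e_1,e_2,e_3\in M_r$, hence (since $S$ does not wind around $N_r$, being contractible in the annulus) has length at least $|\breve{[e_1e_2e_3]}|$, and strictly more because $S$ contains no arc of $M_r$. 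Replacing it by the arc $\breve{[e_1e_2e_3]}$ reconnects $C_1$ to $C_3$ (so no loop is created), strictly decreases $\H$, and preserves coverage: a point of $M$ covered only by the excised subtree has its nearest point of $M_r$ inside $\breve{[e_1e_2e_3]}$ up to an error controlled by the segment-length bound $a_M(r)$, hence is covered by the new arc — and it is exactly here that the hypothesis $R>2a_M(r)+r$ (through Lemma~\ref{global_lm}(iii), and its analogue for segments of $S$ in the annulus) is used. This contradicts minimality, so $m(S)\le2$. The bound $n(S)\le2$ is parallel: three energetic points in $S$ force a long excursion of the tree which can be replaced by a short arc of $M_r$ carrying two tangent segments of length $O(\sqrt{rR})$ that re-cover the three forbidden discs $\overline{B_r(Q_{x_i})}$ and still fit inside $N$ — again a strictly shorter admissible competitor; the degenerate case $m(S)=0$, where connectedness forces $S=\Sigma\subseteq N\setminus\Int N_r$, is handled by the same rerouting along $M_r$. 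Once $n(S)\le2$ and $m(S)\le2$, the set $A_S$ has at most four points, so the locally minimal network $S$ connecting $A_S$ is one of the finitely many types of Figures~\ref{lmn23} and~\ref{lmn4}; in particular $S$ is a finite tree.

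The genuine difficulty lies entirely in the surgery of the last step: making the thin-annulus comparison quantitative, so that the spliced-in arc of $M_r$ (or arc plus tangent segments) re-covers at least the part of $M$ for which the excised piece of $S$ was responsible, and so that the removed length strictly exceeds the added length. This is where the explicit value of $a_M(r)$ and the inequality $R>2a_M(r)+r$ are really needed, and it is also where the non-strictly-convex case of $M_r$ and the bookkeeping of which points of $A_S$ may lie on $M_r$ require the most care.
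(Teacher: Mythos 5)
Your setup ($\Sigma\subset N$, $S$ a tree made of segments meeting $M_r$ only at isolated points) and your treatment of the final ``locally minimal network'' claim are fine and close to the paper's. The genuine gap is in the central bounds $n(S)\le 2$ and $m(S)\le 2$, which you reduce to a splicing surgery that does not work as described. Two concrete problems. First, the length comparison: projecting the excised subtree onto the convex set $N_r$ only shows its length is at least that of the \emph{shortest} connected subset of $M_r$ containing $e_1,e_2,e_3$, which need not be the arc through $e_2$ that you splice in; to get the arc through $e_2$ you would have to exclude that the path from $e_1$ to $e_3$ runs the other way around $N_r$, which is not known at this stage. Second, and fatally, admissibility of the competitor: the excised subtree lies in the annulus $N\setminus \Int(N_r)$ and may cover points of $M$ well outside the arc between $Q_{e_1}$ and $Q_{e_3}$, whereas the spliced arc of $M_r$ covers exactly that arc. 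You control this by invoking ``an analogue of the segment-length bound $a_M(r)$ for segments of $S$ in the annulus'', but no such bound exists: Lemma~\ref{global_lm}(iii) bounds only segments of $\Sigma_r$, i.e.\ chords of $M_r$ inside $N_r$, while segments of $S$ outside $N_r$ --- for instance the tangent segments of the horseshoe itself --- have length of order $\sqrt{Rr}$, far exceeding $2r$. So coverage can genuinely be lost and the new set need not satisfy $F_M\le r$. Your $n(S)\le 2$ argument (``three energetic points force a long excursion\dots'') is only a sketch with the same missing quantitative content.

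For comparison, the paper's proof needs no global surgery. For $n(S)\le 2$: if $x\in S$ is energetic then $B_r(Q_x)\cap\Sigma=\emptyset$, and since $S$ is connected and avoids this ball, $Q_x$ must be an \emph{end} of the arc $q_S$; moreover each end of $q_S$ can correspond to at most one energetic point of $S$. For $m(S)\le 2$: a middle entering point $I_2$ (i.e.\ with $Q_{I_2}$ interior to $q_S$) cannot be energetic, hence by Lemma~\ref{hex} its neighbourhood in $\Sigma$ is a segment, and (after a short replacement argument handling the case where $\Sigma$ is tangent to $M_r$ at $I_2$) $\Sigma$ contains a chord $[I_2J]$ of $M_r$; comparing $|I_2J|$ with $|I_1J|$, $|I_3J|$ and using the cyclic order of $I_1,I_2,I_3,J$ on $M_r$ forces $J$ to lie across $N_r$ from $I_2$, so $|I_2J|\ge 2(R-r)>2r\ge a_M(r)$, contradicting Lemma~\ref{global_lm}(iii). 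This is where the hypothesis $R>2a_M(r)+r$ actually enters --- through chords of $M_r$, not through segments of $S$ in the annulus. As written, your proposal is missing exactly this mechanism (the existence of a chord at a middle entering point and the resulting length contradiction), so the key step of the lemma remains unproved.
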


By the previous Lemma, $S$ is a locally minimal network for at most $n(S) + m(S) \leq 4$ points. 
All the possible combinatorial types of such networks are listed in Figures~\ref{lmn23} and~\ref{lmn4}.

\begin{lemma}
Under conditions of Theorem~\ref{theo1} if $S \in V(G) \defeq V_C(G) \sqcup V_A(G)$ does not reduce to a point, then 
$$q_S \not \subset \bigcup_{S' \in V(G) \setminus \{S\}} q_{S'}.$$
Moreover, every set $S \in V(G)$ has an energetic point.
\label{dugi}
\end{lemma}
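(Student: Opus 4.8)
\medskip

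The plan is to argue by contradiction: suppose some $S \in V(G)$ does not reduce to a point but $q_S \subset \bigcup_{S' \neq S} q_{S'}$. The intuition is that $q_S$ is the only part of $M$ that $S$ is ``responsible'' for covering; if its customers are already served by the other pieces, then $S$ is redundant and can be replaced by something cheaper, contradicting minimality. First I would make precise the fact that $\Sigma \setminus S$ (or rather $\Sigma$ with $S$ replaced by a short connecting arc) still covers $M$: since $q_S$ is covered by $\bigcup_{S' \neq S} q_{S'}$ and every point of $M \setminus q_S$ is by definition covered by $\Sigma \setminus S$ together with the arcs/components other than $S$, the only points of $M$ that could fail to be covered after removing $S$ are those in $q_S$ — and those are covered by hypothesis. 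Then I would use connectedness: $S$ is the closure of a connected component of $\Sigma \setminus N_r$ (the $V_C(G)$ case) or a maximal arc of $M_r$ (the $V_A(G)$ case), so $\Sigma \setminus (\text{relative interior of } S)$ together with the two ``attachment'' points of $S$ to the rest of $\Sigma$ splits $\Sigma$ into at most two pieces; reconnecting them by the geodesic in $M_r$ (a sub-arc of $M_r$) between those attachment points, or simply noting that one of the pieces can be discarded if $S$ is a pendant branch, produces a competitor $\tilde\Sigma$ that covers $M$, is connected, and is strictly shorter. This contradicts $\Sigma \in OPT^*_\infty(M,r)$.

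\medskip

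The execution splits into the structural cases supplied by the earlier lemmas. By Lemma~\ref{global_lm}(ii) and Lemma~\ref{hex}, under the hypotheses of Theorem~\ref{theo1} the set $\Sigma$ consists of arcs of $M_r$ and chords of $M_r$ with disjoint interiors, and $\Sigma \cap \Int(N_r)$ consists of disjoint interiors of chords (no Steiner points inside $N_r$ or on $S_\Sigma \cap M_r$). So $S \in V_C(G)$ is either a single chord of $M_r$, or — using Lemma~\ref{compcon} together with the classification of locally minimal networks on $\le 4$ points in Figures~\ref{lmn23}, \ref{lmn4} — one of a short list of configurations, each with at most two entering points on $M_r$ and at most two energetic endpoints off $M_r$. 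In each case I would identify the (at most two) points where $S$ meets $\overline{\Sigma \setminus S}$: these are entering points, and there are at most two of them, so removing the interior of $S$ disconnects $\Sigma$ into at most two components. If $S$ has $\le 1$ attachment point, it is a pendant branch; since $q_S$ is covered by the rest, deleting $S$ outright yields a strictly shorter connected competitor still covering $M$ — done. If $S$ has exactly two attachment points $B, C \in M_r$, replace $S$ by the shorter of the two arcs of $M_r$ joining $B$ and $C$; this arc lies in $N$, reconnects $\Sigma$, and I must check it is strictly shorter than $S$. For a chord this is immediate unless the chord already is that arc (the non-strictly-convex case), which is ruled out precisely by the convention in the excerpt that a segment which is both chord and arc with no energetic point in its interior is treated as a chord and hence absorbed — so a genuine $S \in V_C(G)$ of positive length that is simultaneously an arc must carry an energetic point, handled below. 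For a genuine tripod/quadripod configuration of $S$, the locally-minimal network on its endpoints is at least as short as any competitor, but the arc of $M_r$ through $B$ and $C$ is a \emph{different} connected set on the same two relevant terminals, and a short Euclidean estimate (using that the chord $[BC]$ has length $\le a_M(r) \le 2r$ while $R$ is large, so the arc $\breve{[BC]}$ is close to the chord, whereas $S$ detours into the exterior of $N_r$ to reach energetic points at distance $> 0$ outside $M_r$) shows $\H(S) > |\breve{[BC]}|$. The $V_A(G)$ case is easier: $S$ is an arc of $M_r$ with an energetic point in its interior by definition of $V_A(G)$, so the ``moreover'' is automatic there, and $q_S = \breve{[Q_BQ_C]}$ being covered by the others means the arc can be shortcut by a shorter sub-arc of $M_r$, again contradicting minimality.

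\medskip

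For the ``moreover'' clause — every $S \in V(G)$ has an energetic point — I would note it is part of the same contradiction: if $S \in V_C(G)$ has no energetic point, then by Lemma~\ref{compcon} $S$ is a locally minimal network whose only terminals are its entering points on $M_r$, i.e.\ at most two points $B, C$ on $M_r$ with no other constraints, so $S$ reduces to the chord $[BC]$ or to the segment-that-is-an-arc case; but then $q_S \subset \overline{B_r([BC])}$ and, because $[BC]$ carries no energetic point, property (a) of energetic points forces every customer in $q_S$ to be covered by $\Sigma \setminus B_\rho(x)$ for points $x$ near the middle — made rigorous, $q_S$ is covered by the rest of $\Sigma$, so we are back in the main case and get a strictly shorter competitor, a contradiction unless $S$ is a point.

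\medskip

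\textbf{Main obstacle.} The delicate step is the strict length comparison $\H(S) > |\breve{[BC]}|$ (equivalently $> \H(\text{the shortcut})$) in the configurations where $S$ is a nontrivial tripod or quadripod reaching energetic points slightly outside $M_r$: one must quantitatively exploit $R \gg r$ (the hypothesis $R > 2a_M(r)+r$ of Lemma~\ref{hex}, or $R>5r$ of Theorem~\ref{theo1}) to guarantee the arc of $M_r$ through the two entering points is genuinely cheaper than the detour, and to rule out equality in the non-strictly-convex boundary case — which is exactly why the paper's bookkeeping convention about segments that are simultaneously chords and arcs of $M_r$ was set up, and where I would lean on it.
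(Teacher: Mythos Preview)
Your approach inverts the logical order the paper uses and, in doing so, manufactures a hard step that the paper never needs. The paper's proof is three lines: it first observes that if $S$ has an energetic point $x$, then by property~(a) the corresponding $Q_x\in q_S$ satisfies $B_r(Q_x)\cap\Sigma=\emptyset$, so $Q_x$ cannot lie in $q_{S'}$ for any $S'\neq S$; this gives the first claim immediately from the ``moreover'' claim. Then the ``moreover'' claim is disposed of structurally: if $S\in V_C(G)$ had no energetic point, Lemma~\ref{compcon} would make $S$ a locally minimal network on its $\le 2$ entering points alone, i.e.\ a segment with both endpoints on $M_r$; but a segment with endpoints on the boundary of the convex set $N_r$ lies inside $N_r$, so it cannot be the closure of a connected component of $\Sigma\setminus N_r$ --- contradiction. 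For $S\in V_A(G)$ the ``moreover'' is built into the definition. No length comparison, no replacement, no shortcut is ever performed.

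You correctly reach ``$S$ reduces to the chord $[BC]$'' in the $V_C(G)$ case with no energetic point, but you miss the one-line convexity contradiction above and instead try to feed this back into your main replacement argument. That loop is broken: for a chord, the arc $\breve{[BC]}$ of $M_r$ you propose as a reconnection is \emph{longer} than $[BC]$, not shorter, so you get no contradiction with minimality. More generally, your ``main obstacle'' --- proving $\H(S)>\H(\breve{[BC]})$ for a tripod or quadripod $S$ reaching outside $N_r$ --- can in fact be handled by projecting $S$ onto the convex set $N_r$ (the projection lands on $M_r$, is connected, contains $B,C$, and strictly contracts length since $S$ has points strictly outside $N_r$), but the paper simply never needs this: once you notice the convexity obstruction, the whole replacement machinery falls away.
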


\begin{lemma}
Under conditions of Theorem~\ref{theo1} the set $V(G) = V_C(G) \sqcup V_A(G)$ is finite.
\label{accum}
\end{lemma}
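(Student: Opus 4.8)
The plan is to argue by contradiction: assume $V(G) = V_C(G) \sqcup V_A(G)$ is infinite. By Lemma~\ref{dugi} every $S \in V(G)$ carries an energetic point, and (using property (a) of energetic points together with Lemma~\ref{compcon}, which bounds $n(S) \le 2$) each $S$ contributes a covered arc $q_S$ of $M$ of \emph{positive} length which is not fully contained in the union of the others. The total length is finite, so the lengths $|q_S|$ must tend to $0$, and by compactness of $M$ we may extract a sequence $S_k$ with $\diam(q_{S_k}) \to 0$ and $q_{S_k}$ converging to a single point $P \in M$. I would then show this forces geometric degeneration of the $S_k$ themselves: since $q_{S_k} = \overline{B_r(S_k)} \cap M$ (for $S_k \in V_C(G)$) resp. $\breve{[Q_{B_k}Q_{C_k}]}$ (for $S_k \in V_A(G)$), and $M$ has minimal radius of curvature $R > r$, a small covered arc on $M$ forces $S_k$ to lie in a small neighbourhood of the single point $P_r \in M_r$ at distance $r$ from $P$ (the foot of the inward normal), with $\diam(S_k) \to 0$ as well. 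Here I would use the quantitative relation between $\diam(q_S)$ and $\diam(S)$ coming from the curvature bound on $M$ and $M_r$.

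Next I would derive a contradiction with minimality. Once infinitely many of the $S_k$ are pairwise disjoint (which follows from the absence of loops, Lemma~\ref{cycle}, and the structure in Lemma~\ref{global_lm}(ii): distinct elements of $V(G)$ have disjoint interiors, and shrinking ones near $P_r$ must eventually be disjoint), each such $S_k$ is a connected component-closure of $\Sigma$ outside $N_r$ (or a maximal arc), hence a genuinely separate "piece" of $\Sigma$ near $P_r$. But $\Sigma$ is connected, so each $S_k$ must be joined to the rest of $\Sigma$ through $M_r$; in particular each $S_k \in V_C(G)$ has an entering point on $M_r$ and is a locally minimal network on at most $4$ points (Lemma~\ref{compcon}), and if $S_k$ is tiny it is essentially a short segment or small tripod attached to $M_r$ near $P_r$. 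The key observation is that such a small $S_k$ covers only an arc $q_{S_k}$ of length $O(\diam S_k)$, while $\Sigma$ already contains, near $P_r$, a chord or arc of $M_r$ (by Lemma~\ref{hex}, $\Sigma \cap \Int(N_r)$ consists of chords, and the arcs in $V_A(G)$ lie on $M_r$); one can then remove a small $S_k$ (or replace it) and repair connectivity and coverage at strictly smaller total length, contradicting $\Sigma \in OPT_\infty^*(M,r)$. Concretely, I would show that the portion of $q_{S_k}$ not covered by $V(G)\setminus\{S_k\}$ — which is nonempty by Lemma~\ref{dugi} — can be covered "for free" by extending a neighbouring chord/arc by an amount $o(\diam S_k)$, whereas $S_k$ itself had length bounded below by a positive multiple of the length of the arc it uniquely covers (a short network attached to $M_r$ covering an arc of length $\ell$ has length $\gtrsim \ell/2$, with the constant controlled away from $0$ by the curvature bound).

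To make the last step rigorous I would localize: pick $k$ large enough that $S_k$, together with the pieces of $V(G)$ whose covered arcs meet $q_{S_k}$, all lie in $B_\delta(P_r)$ for $\delta$ as small as we like; use Lemma~\ref{global_lm}(iii) (segments in $\Sigma_r$ have length $\le a_M(r) \le 2r$) and Lemma~\ref{hex} to describe $\Sigma$ in a fixed-size neighbourhood of $P_r$ as finitely many chords plus arcs, so that the "competitor surgery" near $P_r$ only perturbs a bounded, controlled part of $\Sigma$; then estimate the length change explicitly. The main obstacle I anticipate is the bookkeeping of \emph{which} neighbouring piece absorbs the uniquely-covered sub-arc of $q_{S_k}$ and verifying that the repair keeps $\Sigma$ connected without reintroducing a loop or losing coverage elsewhere — i.e.\ turning the intuitive "a vanishingly small component cannot pay for itself" into a clean competitor with a genuinely negative length increment, uniformly in $k$. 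Everything else (extraction of a convergent subsequence, the curvature-based comparison of $\diam q_S$ and $\diam S$, disjointness of small pieces) is soft or routine given the lemmas already established.
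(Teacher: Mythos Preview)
Your opening moves---the contradiction hypothesis, the observation (via Lemma~\ref{dugi}) that each point of $M$ lies in at most two $q_S$ so that $\sum \H(q_S)\le 2\H(M)$, and the extraction of a sequence $S_k$ with $\H(q_{S_k})\to 0$---are sound and indeed match the paper. Your claim that $\diam(S_k)\to 0$ is also correct (a point of $S_k$ at distance $d$ from $M$ covers an arc of length comparable to $2\sqrt{r^2-d^2}$, so small $q_{S_k}$ forces $d\to r$ uniformly on $S_k$), though you should spell that out.

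The genuine gap is the surgery. You assert that the uniquely-covered portion of $q_{S_k}$ can be absorbed by a neighbour at cost $o(\diam S_k)$ and that removing $S_k$ therefore yields a strict gain, but you give no mechanism for this and the claimed order is unsupported. If $m(S_k)=2$, deleting $S_k$ disconnects $\Sigma$; the natural reconnection by the arc $\breve{[I_1I_2]}\subset M_r$ has length at least $|I_1I_2|$, while $S_k$ (a path from $I_1$ to $I_2$ outside the convex set $N_r$) also has length at least the arc length $|\breve{I_1I_2}|$, so there is no automatic strict saving---and this reconnection may still fail to cover the portion of $q_{S_k}$ that lay beyond $Q_{I_1}$ or $Q_{I_2}$ (coming from the energetic point of $S_k$). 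You correctly flag this bookkeeping as ``the main obstacle,'' but it is the entire content of the lemma, not a detail.

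The paper avoids this difficulty by \emph{not} excising a whole vertex $S_k$. Instead it first argues that infinitely many \emph{chords} of $M_r$ in $\Sigma$ intersect $\Int(N_r)$, discards those of length $\ge\varepsilon$ or adjacent to large components, and shows (by three short angle computations) that the remaining short chords connect components \emph{without Steiner points}. Picking one such chord $[I_1I_2]$, it extends along the adjacent segments to the nearest energetic points $W_1,W_2$, forms the region $P$ bounded by $[W_1W_2]$, $[W_iQ_{W_i}]$ and the small arc $\breve{[Q_{W_1}Q_{W_2}]}$, and shows (by a tangent-line coverage argument) that $\Sigma\cap\Int(P)\neq\emptyset$. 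Tracing the connecting path forces one endpoint $W_1=I_1\in M_r$, and then a \emph{local} replacement---swapping a short piece of $[I_1I_2]$ for the perpendicular drop to a nearby point $y\in\Sigma\cap P$---produces a strictly shorter connected competitor with the same coverage. The key difference is that the paper's surgery is a first-order local move with an explicit angle estimate, whereas your proposed ``remove and re-cover'' is a zeroth-order global replacement whose sign you have not controlled.
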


Note that a singleton of $\Sigma \cap M_r$ (a maximal arc $\xi \subset \Sigma \cap M_r$ of zero length not contained in the closure of a connected component of $\Sigma \setminus N_r$) cannot be energetic (by the previous Lemma the union of $q_S$ over $S \in V(G) \setminus \xi$ is closed as a finite union of closed sets, hence it coincides with $M$ because $q_\xi = \{ Q_\xi \}$), so a neighbourhood of $\xi$ is a segment or a tripod (the latter is impossible by Lemma~\ref{hex}).
Summing up, every point of $\Sigma \cap M_r$ is contained in a maximal arc of $M_r$ of positive length or in the closure of a connected component of $\Sigma \setminus N_r$. Also by Lemma~\ref{compcon} every connected component of $\Sigma \setminus N_r$ contains at most 5 segments, thus $\Sigma$ consists of a finite number of segments and arcs of $M_r$.

\begin{lemma}
Under conditions of Theorem~\ref{theo1} let $[BI] \subset \Sigma$ be a chord of $M_r$.
Then $I \in S_\Sigma$ and moreover there exists such an $\varepsilon>0$ that $\overline{B_\varepsilon(I)} \cap \Sigma = [I_1I_2]$, for some $I_1, I_2 \in \partial B_\varepsilon (I)$. 
\label{bpMr}
\end{lemma}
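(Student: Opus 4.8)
The plan is to split the statement into two parts: first, deducing the local picture of $\Sigma$ near $I$ from the (substantial) fact that $I\in S_\Sigma$; second, proving that $I\in S_\Sigma$, i.e. that $I$ is not energetic. For the first part, note that $G_\Sigma$ is closed: if $x_n\to x$ with all $x_n$ energetic, then for each $\rho>0$ one has $\Sigma\setminus B_\rho(x)\subset\Sigma\setminus B_{\rho/2}(x_n)$ for $n$ large, hence $F_M(\Sigma\setminus B_\rho(x))\ge F_M(\Sigma\setminus B_{\rho/2}(x_n))>r$. Thus $S_\Sigma$ is relatively open in $\Sigma$, so once $I\in S_\Sigma$ a whole neighbourhood of $I$ in $\Sigma$ consists of non-energetic points; by property~(b) it is a line segment or a regular tripod, the tripod being excluded because $I\in S_\Sigma\cap M_r$ and Lemma~\ref{hex} forbids Steiner points there; and $I$ is not an endpoint of $\Sigma$, since an endpoint of $\Sigma$ is always energetic (otherwise a short terminal piece could be removed, keeping $F_M$ unchanged and decreasing $\H$), so the segment passes through $I$. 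Hence $\overline{B_\varepsilon(I)}\cap\Sigma=[I_1I_2]$ with $I_1,I_2\in\partial B_\varepsilon(I)$ for all small $\varepsilon$.

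Now suppose, for contradiction, that $I$ is energetic; fix $Q_I\in M$ with $\dist(I,Q_I)=r$ and $B_r(Q_I)\cap\Sigma=\emptyset$. Then $[IQ_I]$ lies on the common normal of $M_r$ at $I$ and of $M$ at $Q_I$; write $\nu$ for the outward unit normal of $M_r$ at $I$ (so $Q_I=I+r\nu$) and $\tau_0$ for a unit tangent. Three facts about $\Sigma$ near $I$: (i) by Lemma~\ref{hex} the interior of the chord lies in $\Int N_r$, so the chord enters $N_r$ transversally, i.e. $u:=(B-I)/|B-I|$ satisfies $\langle u,\nu\rangle<0$; (ii) $I$ has degree at least $2$, for otherwise $\{I\}$ is a singleton component of $\Sigma\cap M_r$, and such singletons are not energetic (by the discussion preceding the lemma); (iii) any segment of $\Sigma$ issuing from $I$ and leaving $N_r$ must be tangent to $M_r$ at $I$, since a direction $w$ with $\langle w,\nu\rangle>0$ would put $I+tw$ inside $B_r(Q_I)$ for small $t>0$. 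Combining (iii) with the already established finite decomposition of $\Sigma$ into arcs of $M_r$ and segments, every edge of $\Sigma$ at $I$ other than the chord has direction $+\tau_0$ or $-\tau_0$.

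Next observe that $[BI]$ is a bridge of the tree $\Sigma$ (Lemma~\ref{cycle}): deleting its interior leaves two components $\Sigma_B\ni B$ and $\Sigma_I\ni I$, and $\Sigma_B\cup\Sigma_I=\Sigma\setminus{]BI[}$ still covers $M$, because the only points of $M$ within distance $r$ of the chord are $Q_B$ and $Q_I$ (interior points of $[BI]$ lie in $\Int N_r$, hence at distance $>r$ from $M$) and these are covered by $B$ and $I$. Consequently $\dist(\Sigma_B,\Sigma_I)=|BI|$ — otherwise reconnecting $\Sigma_B$ to $\Sigma_I$ by a shorter segment produces an admissible competitor of strictly smaller length — so $(B,I)$ is a closest pair, which forces $\langle u,\tau\rangle\le0$ for every edge direction $\tau$ of $\Sigma_I$ at $I$ (sliding $I$ along such an edge must not decrease its distance to $B$). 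If edges leave $I$ in \emph{both} tangent directions $\pm\tau_0$, this yields $\langle u,\tau_0\rangle=0$, hence $u=-\nu$: the chord runs along the inward normal of $M_r$ at $I$. But $M_r$ has minimal radius of curvature $\ge R-r$, so the disk of radius $R-r$ internally tangent to $M_r$ at $I$ lies in $N_r$, and therefore the inward normal ray from $I$ stays in $N_r$ for length at least $2(R-r)$; thus $|BI|\ge2(R-r)$, contradicting $|BI|\le a_M(r)\le2r$ (Lemma~\ref{global_lm}(iii)) because $R>5r$ (resp. $R>4.98r$ for the circle).

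It remains to verify that edges of $\Sigma$ really do leave $I$ in both tangent directions, and this is the main obstacle. The tool is a covering argument near $Q_I$: the points of $M$ lying immediately on either side of $Q_I$ must be covered by $\Sigma$; they cannot be covered by the chord near $I$ (its interior is too far from $M$, and $I$ itself covers only $Q_I$); and — using $B_r(Q_I)\cap\Sigma=\emptyset$ together with $\overline{B_r(Q_I)}\cap M_r=\{I\}$ (valid since $r<R-r$) — any point of $\Sigma$ near $I$ that covers one of these two arms must sit on $M_r$, or in the strip between $M_r$ and $M$, on that same side, i.e. in direction $+\tau_0$ or $-\tau_0$ respectively. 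The finiteness of $V(G)$ (Lemma~\ref{accum}) together with Lemma~\ref{dugi} is then used to preclude a component of $\Sigma$ far from $I$ from taking over the covering of one of these arms on its own, so both directions must carry an edge of $\Sigma$ at $I$. Carrying out this last step rigorously — and separately handling the degenerate situation in which a flat subarc of $M_r$ plays the role of the chord — is where the actual work lies; everything else, in particular the curvature estimate $|BI|\ge2(R-r)$, is immediate.
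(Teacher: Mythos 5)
Your proposal has a genuine gap at its central step, and it is exactly the step the paper's proof spends most of its effort on. After reducing to the case where $I$ is energetic, your bridge/closest-pair argument only yields $\langle u,\tau\rangle\le 0$ for every direction $\tau$ of $\Sigma_I$ at $I$, and hence a contradiction only in the sub-case where branches of $\Sigma$ leave $I$ in \emph{both} tangent directions $\pm\tau_0$ (there it is a nice alternative to the paper's treatment, which handles that sub-case by a Steiner-tree shortcut, since one of the two tangent branches makes angle at most $\pi/2$ with the chord). But the configurations that actually have to be excluded are the one-sided ones: the chord together with a single tangent segment, or a single arc of $M_r$, or a tangent segment and an arc on the same side, or the chord alone (cases (iii)--(vi) in the paper's proof). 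These satisfy $\langle u,\tau\rangle\le0$ and are perfectly compatible with Lemmas~\ref{global_lm}, \ref{hex}, \ref{compcon}, \ref{dugi}, \ref{accum}, so nothing proved before Lemma~\ref{bpMr} rules them out; you defer them to a ``covering argument near $Q_I$'' which you do not carry out, and which in fact cannot work as sketched: the arm of $M$ on the empty side of $Q_I$ may legitimately be covered by a distant element of $V(G)$ whose arc $q_{S'}$ reaches up to $Q_I$ — Lemma~\ref{dugi} only forbids $q_S$ being swallowed entirely, and Lemma~\ref{accum} only gives finiteness, so neither precludes this. (The path structure of $G$, which might help, is Corollary~\ref{co_graph1} and is proved \emph{using} the present lemma.)

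The paper avoids any claim about who covers what near $Q_I$: it excludes each one-sided configuration by an explicit local surgery inside $B_\varepsilon(I)$ that strictly decreases length while keeping $Q_I$ covered, e.g.\ replacing $[FI]\cup[IE]$ by $[EF]\cup[EH]$ with $H\in\partial B_r(Q_I)$ (using $|EH|=\sqrt{\varepsilon^2+r^2}-r=o(\varepsilon)$ and $|EF|<2\varepsilon-c\varepsilon$), or replacing a short arc by the perpendicular foot onto the tangent segment; the coverage of the rest of $M$ is untouched because the modification is local. Some such quantitative competitor construction is indispensable, and it is absent from your proposal; the endpoint case and the flat-chord degeneracy you also leave aside (the former is indeed easy via the singleton discussion after Lemma~\ref{accum}, as in the paper's case (vi)). So the first part of your write-up (deducing the segment picture from $I\in S_\Sigma$ via closedness of $G_\Sigma$, Lemma~\ref{hex}, and the non-endpoint observation) and the curvature estimate $|BI|\ge 2(R-r)$ are fine, but the heart of the lemma — that an energetic chord endpoint with branches on only one side is impossible — is not proved.
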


\begin{lemma}
    Under conditions of Theorem~\ref{theo1} every maximal arc $\breve{[BC]} \in V_A(G)$ is continued by segments lying on tangent lines to $M_r$ in the sense that
    there exists such an open $U \supset \breve{[BC]}$ that $\Sigma \cap \overline{U} = [B'B] \cup \breve{[BC]} \cup [CC']$, where $[B'B]$ and $[CC']$ are subsets of tangent lines to $M_r$ at points $B$, $C$ respectively. 
    \label{27}
\end{lemma}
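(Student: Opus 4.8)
The plan is to work in a thin neighbourhood of the maximal arc $\breve{[BC]}$: first pin down the local structure of $\Sigma$ at an end of the arc, then show that the branch continuing the arc is forced to lie on the tangent line by a ``pinching'' argument, and finally rule out anything attached to the interior of the arc. I assume throughout the structure recalled before Lemma~\ref{bpMr} (so $\Sigma$ is a finite union of segments and arcs of $M_r$), together with Lemmas~\ref{cycle},~\ref{global_lm},~\ref{hex},~\ref{compcon} and~\ref{bpMr}.

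Structure at an end. Fix an end $B$ of $\breve{[BC]}$, so near $B$ the set $\Sigma$ is the arc $\breve{[BB_1]}\subset\breve{[BC]}$ together with finitely many other incident branches. First, $B$ is energetic: otherwise $B\in S_\Sigma$, so by property (b) a neighbourhood of $B$ in $\Sigma$ is a segment or a regular tripod; a tripod is impossible because its centre would lie in $S_\Sigma\cap M_r$, contradicting Lemma~\ref{hex}, and a segment is impossible because $\breve{[BC]}$ is genuinely non-straight at $B$ (the degenerate case in which an arc of $M_r$ is simultaneously a chord does not occur here, since such an arc carries no energetic point while $\breve{[BC]}\in V_A(G)$ does). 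Next, no incident branch enters $\Int(N_r)$: by Lemma~\ref{hex} such a branch would be a chord $[BD]$ of $M_r$, and applying Lemma~\ref{bpMr} to this chord with $I:=B$ would make $\Sigma$ near $B$ a single line segment, again contradicting non-straightness of $\breve{[BC]}$ at $B$. There is also no second arc of $M_r$ at $B$, by maximality of $\breve{[BC]}$ and Lemma~\ref{global_lm}(ii). Hence every incident branch other than $\breve{[BB_1]}$ lies in $\overline{\Sigma\setminus N_r}$, i.e.\ belongs to the closure $S$ of a connected component of $\Sigma\setminus N_r$ with $B$ as an entering point, and $\Sigma$ near $B$ is $\breve{[BB_1]}$ together with finitely many segments issued from $B$ into the complement of $\Int(N_r)$.

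Tangency --- the heart of the argument. Use coordinates with $B=(0,0)$, the tangent line $T$ to $M_r$ at $B$ being the $x$-axis, $N_r$ locally below $T$, and $Q_B=(0,r)$ (recall $[BQ_B]$ is normal to $M_r$ at $B$). Since $B$ is energetic, property (a) gives $B_r(Q_B)\cap\Sigma=\emptyset$, and near $B$ the open disc $B_r(Q_B)$ contains a region bounded below by a graph $y=cx^2+o(x^2)$ with $c>0$; hence $\Sigma$, in particular $S$, stays near $B$ below such a graph. On the other hand $S$ avoids $\Int(N_r)$, which near $B$ lies below a concave graph $y=f(x)$ with $f(0)=f'(0)=0$ and, because $M_r$ has minimal radius of curvature at least $R-r$, with $|f''|\le 1/(R-r)$, so $f(x)\ge -x^2/2(R-r)$. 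Consequently $S$ near $B$ lies in a horn $\{|x|\ \text{small},\ |y|\le C x^2\}$ pinching to $B$ along $T$. A straight segment issued from $B$ and contained in such a horn must lie on $T$: in a direction making angle $\alpha$ with $T$ its point at parameter $t$ has height $|t\sin\alpha|$, and $|t\sin\alpha|\le Ct^2\cos^2\alpha$ for all small $t>0$ forces $\sin\alpha=0$. Thus the segments of $S$ at $B$ lie on $T$ and, being part of a set, reduce to at most one ray of $T$ on each side of $B$. A segment on the ray toward $q$ would, together with $\breve{[BB_1]}$, either bound a loop (excluded by Lemma~\ref{cycle}) or be a redundant dead end covering only an arc of $M$ already covered by $\breve{[BB_1]}$, and its removal would give a strictly shorter connected competitor --- contradiction. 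If there were also no segment on the opposite ray, $B$ would be an endpoint of $\Sigma$; but then truncating a short terminal sub-arc $\breve{]B_0B]}$ of $\breve{[BC]}$ and re-covering the lost arc of $M$ by a tangent segment issued from $B_0$ gives a connected competitor of strictly smaller length, since a tangent segment covers an arc of $M$ more economically than an arc of $M_r$ (the quantitative estimate underlying Theorem~\ref{theo1}) --- again a contradiction. Hence there is exactly one segment of $S$ at $B$, a non-degenerate $[B'B]$ lying on the tangent line to $M_r$ at $B$, issued on the side of $B$ away from $q$.

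Conclusion and main obstacle. The same argument at $C$ yields a tangent segment $[CC']$. For an interior point $x\in\breve{]BC[}$ the reasoning above shows $x$ carries no extra branch: a non-energetic $x$ is excluded as before (tripod centre in $S_\Sigma\cap M_r$ versus Lemma~\ref{hex}, or a segment versus non-straightness of the arc), while for energetic $x$ an extra branch would be either a chord (excluded by Lemma~\ref{bpMr}) or part of a component $S'\in V_C(G)$, in which case the horn argument at $x$ forces $S'$ near $x$ onto the tangent line at $x$, doubling the arc and contradicting minimality. Choosing an open $U\supset\breve{[BC]}$ thin enough to meet no other portion of $\Sigma$ and so that $[B'B]$ and $[CC']$ do not re-enter $U$, we obtain $\Sigma\cap\overline U=[B'B]\cup\breve{[BC]}\cup[CC']$. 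I expect the main obstacle to be the competitor arguments in the tangency step --- the exclusion of $B$ being an endpoint of $\Sigma$, of ``doubling'' segments, and of a component attached to the interior of the arc --- which rely on the sharp comparison showing that near a free end it is cheaper to cover an arc of $M$ with a tangent segment than with an arc of $M_r$; this is precisely the inequality that makes horseshoes optimal and into which the hypothesis $r<R/5$ (resp.\ $r<R/4.98$) feeds. The tangency itself, by contrast, is the robust local pinching argument and should go through with only routine curvature estimates.
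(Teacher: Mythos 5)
Your overall skeleton coincides with the paper's: rule out chords at the ends via Lemma~\ref{bpMr}, note that any branch attached to the arc and lying in $\overline{\Sigma\setminus N_r}$ is squeezed between $\partial B_r(Q_x)$ and $M_r$ and hence must lie on the tangent line (your ``horn'' estimate is exactly the paper's implicit pinching), and finish by finiteness. However, the load-bearing exclusion steps have genuine gaps. First, your claim that the end $B$ is energetic rests on ``$\breve{[BC]}$ is genuinely non-straight at $B$'', justified only by the fact that $\breve{[BC]}\in V_A(G)$ carries an energetic interior point. That does not follow: with the paper's definition of minimal radius of curvature, $M$ (hence $M_r$) may contain flat pieces (see Example~\ref{stadium}), so an arc of $V_A(G)$ can be straight near $B$ and $B$ can lie in $S_\Sigma$ with a straight segment as its neighbourhood. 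The paper keeps exactly this case (``the case of a segment is only possible \dots part of $M_r$ \dots is in fact flat''), where the conclusion is immediate; your argument, which needs $B_r(Q_B)\cap\Sigma=\emptyset$ (i.e.\ $B$ energetic) to run the horn estimate, and which also invokes non-straightness to dismiss chords at $B$ and extra branches at interior points, simply does not cover it.

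Second, the two competitor arguments you flag as the main obstacle are indeed where your proof fails, and the paper resolves them differently and more cheaply. (i) To show $B$ is not an endpoint of $\Sigma$ you truncate a terminal sub-arc and re-cover with a tangent segment, appealing to ``a tangent segment covers more economically than an arc of $M_r$''; besides being exactly the quantitative estimate you defer, this comparison gives \emph{no strict gain} when $M_r$ is flat near $B$ (there the tangent segment and the arc coincide). The paper instead replaces the terminal sub-arc $\breve{[B_1B]}$ by the segment $[B_1Q_B]\setminus B_r(Q_B)$: the triangle inequality gives a strict length decrease in all cases, and coverage of $\breve{[Q_BQ_C]}$ persists for small $\varepsilon$ — no global estimate and no case distinction on curvature is needed. (ii) To exclude a tangent branch on the arc side of $B$, or at an interior energetic point, you argue ``loop or removable dead end'' resp.\ ``doubling contradicts minimality''; this dichotomy is incomplete, since such a branch may continue to further parts of $\Sigma$ and cover portions of $M$ not covered by the arc, so it cannot simply be deleted. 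The paper's device is to keep the branch and shortcut the \emph{arc} onto it: since the branch is tangent, it makes an angle less than $\pi/6$ with one side of the arc for small $\varepsilon$, and replacing the sub-arc $\breve{]ID_1]}$ by the perpendicular foot segment $[HD_1]$ yields a connected competitor with $|HD_1|<\H(\breve{[ID_1]})/2$ and no worse energy — a contradiction with optimality that works regardless of what the branch is attached to. Without these replacement constructions (or equivalents), your proof does not close.
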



\begin{lemma}
Under conditions of Theorem~\ref{theo1} let $C \in M_r \cap \Sigma$. Then $\Sigma$ has the tangent line at $C$, in particular for every $\varepsilon > 0$ there is a $\delta > 0$ such that for every couple of points $B,D \in \Sigma \cap B_\delta (C) \setminus C$, holds $\min (|\angle BCD - \pi|, |\angle BCD|) < \varepsilon$.
\label{debil}    
\end{lemma}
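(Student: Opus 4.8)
The plan is to show that at a point $C \in M_r \cap \Sigma$ the set $\Sigma$ has a well-defined tangent direction, and that this direction is the tangent line to $M_r$ at $C$. First I would use Lemma~\ref{dugi} and Lemma~\ref{accum} together with the structural results: every point of $\Sigma \cap M_r$ lies either in a maximal arc $\breve{[BC]} \in V_A(G)$ of positive length, or in the closure $S \in V_C(G)$ of a connected component of $\Sigma \setminus N_r$, or is the single entering point situation handled by Lemma~\ref{bpMr}. This gives a finite list of local pictures around $C$ to analyze, and by Lemma~\ref{compcon} each $S \in V_C(G)$ is a locally minimal network on at most four points, so only finitely many combinatorial types occur.

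Next I would run through the cases. \textbf{Case 1:} $C$ is an interior point of a maximal arc $\breve{[BC]}$ of positive length — then a neighbourhood of $C$ in $\Sigma$ is just an arc of $M_r$, which is $C^{1,1}$ by the curvature assumption, so it has a genuine tangent line and we are done. \textbf{Case 2:} $C$ is an endpoint of such an arc; then by Lemma~\ref{27} the arc is continued on one side by a segment on the tangent line to $M_r$ at $C$, and on the other side we have the arc itself, whose one-sided tangent at $C$ is again the tangent line to $M_r$ at $C$; the two one-sided tangents agree, giving the tangent line and the angle estimate. \textbf{Case 3:} $C$ lies in some $S \in V_C(G)$ with $C$ an entering point. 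By Lemma~\ref{bpMr}, if $C$ is the endpoint of a chord $[BC]$ of $M_r$ then locally $\Sigma$ is a single segment $[I_1 I_2]$ — so trivially has a tangent (and then I must check this segment is tangent to $M_r$, which follows because the chord cannot enter $\Int(N_r)$ on the far side; actually the conclusion here is that $C$ being on a chord forces the chord to degenerate appropriately, or $C$ is not on a chord and we are in the genuine entering-point picture). In the remaining subcase $C$ is an entering point of a non-degenerate component $S$: here I use Lemma~\ref{compcon} to say $S$ near $C$ is a single segment meeting $M_r$ at $C$, and by the minimality (absence of a cheaper competitor that slides $C$ along $M_r$) this segment must be orthogonal to $M_r$ at $C$ — \emph{but} orthogonality is not what is claimed; what is claimed is that $\Sigma$ has a tangent at $C$, and since near $C$ inside $S$ there is just one segment, and near $C$ outside $S$ there is... nothing (because $C$ is not in the interior of any arc), $\Sigma \cap B_\delta(C)$ is a single segment emanating from $C$, which has a tangent line trivially, and the angle condition $\min(|\angle BCD - \pi|, |\angle BCD|) < \varepsilon$ holds because any two points $B, D$ of this single segment near $C$ are collinear with $C$.

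The subtle point — and the step I expect to be the main obstacle — is ruling out that $C$ is simultaneously the endpoint of a segment (or arc) coming from one side \emph{and} of another segment coming from a genuinely different direction, i.e.\ that $C$ is not a "corner" of $\Sigma$ on $M_r$. For a Steiner point this is excluded by Lemma~\ref{hex} (no Steiner point on $S_\Sigma \cap M_r$), and for an energetic point $C \in M_r$ one uses property (a): $B_r(Q_C) \cap \Sigma = \emptyset$ with $[CQ_C]$ normal to $M_r$, so $\Sigma$ near $C$ must lie in the half-plane bounded by the tangent line to $M_r$ at $C$ on the side away from $Q_C$, and combining this with $\Sigma \subset \overline{N_r}$ locally (near an energetic point on $M_r$, the minimizer cannot go outside) pins $\Sigma$ to approach $C$ tangentially — any transversal approach would either cross into $B_r(Q_C)$ or would be a shortcut contradicting minimality (one could replace a small transversal piece by a piece along $M_r$, strictly decreasing length while still covering the same part of $M$, using that $R > 5r$ so $M_r$ has positive reach). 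I would make this precise with a short cut-and-paste / first-variation argument: if $\Sigma$ had two one-sided tangents at $C$ enclosing an angle bounded away from $\pi$, the bisecting region between them and $M_r$ could be used to construct a strictly shorter competitor still covering $M$, contradicting $\Sigma \in OPT_\infty^*(M,r)$. Quantifying "strictly shorter" is the routine-but-delicate estimate, and the curvature hypothesis $R > 5r$ (equivalently $R(\partial N_r) \geq R - r > 4r$) is exactly what guarantees the relevant neighbourhood of $M_r$ is foliated by normals so the construction is valid.
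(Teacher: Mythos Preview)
Your case analysis is essentially the same as the paper's, and the cases you list are the right ones. The difference is only in the final ``subtle point'': you propose a cut-and-paste / first-variation argument to exclude a corner at an energetic point $C \in M_r$, whereas the paper gets this for free from pure geometry. Once you have reduced to the situation $B_\varepsilon(C) \cap \Int(N_r) = \emptyset$ and $C$ energetic, the two constraints $\Sigma \cap B_r(Q_C) = \emptyset$ and $\Sigma \cap B_\varepsilon(C) \cap \Int(N_r) = \emptyset$ already confine $\Sigma \cap B_\varepsilon(C)$ to the region between $\partial B_r(Q_C)$ and $M_r$, and these two curves are tangent at $C$ (since $[CQ_C]$ is normal to $M_r$). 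Combined with the fact (consequence of Lemma~\ref{accum}) that $\Sigma$ is a finite union of arcs of $M_r$ and straight segments, any segment through $C$ is forced onto the tangent line to $M_r$ at $C$; no competitor construction is needed. Your proposed shortcut argument would also work, but it is redundant here and the curvature bound $R>5r$ is not used beyond what is already needed for Lemmas~\ref{hex}, \ref{bpMr}, \ref{27}, \ref{accum}.

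One small correction: in your Case~3 you briefly suggest the entering segment ``must be orthogonal to $M_r$'', then retract it. Indeed it need not be orthogonal; but the point is that in the chord-free, non-arc case the neighbourhood is a \emph{single} segment (by Lemma~\ref{hex} if $C \in S_\Sigma$, or by the pinching argument above if $C$ is energetic), and a single segment through $C$ trivially satisfies the angle condition in the statement. The tangent direction claim is then that this segment lies on the tangent line to $M_r$, which in the energetic case follows from the pinching and in the non-energetic case is either automatic (the segment \emph{is} a piece of a chord, handled by Lemma~\ref{bpMr}) or the segment is an arc of $M_r$ that happens to be straight.
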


\begin{definition}
Under conditions of Theorem~\ref{theo1} consider the following abstract graph $G = (V(G), E(G))$ (recall that the set of vertices $V (G) = V_C(G) \sqcup V_A(G)$; by Lemma~\ref{accum} it is finite), 
where the set of edges $E(G)$ is defined as follows:

\begin{itemize}
\item in the case $S_1$, $S_2 \in V_C(G)$ there is an edge between them if they are connected in $\Sigma$ by a chord of $M_r$ or if $S_1 \cap S_2 \neq \emptyset$;
\item in the case $S_1 \in V_C(G)$, $\breve{[BC]} \in V_A(G)$ there is an edge between $S_1$ and $\breve{[BC]}$, if $S_1 \cap \breve{[BC]} \neq \emptyset$; 
\item and finally in the case $\breve{[B_1C_1]}$, $\breve{[B_2C_2]} \in V_A(G)$ there is no edge between them.
\end{itemize}
\end{definition}

\begin{corollary}\label{co_graph1}
Under conditions of Theorem~\ref{theo1} graph $G$ has no cycles; it has exactly two vertices of degree $1$ and all the other vertices have degree $2$. In other words $G$ is a path with at least one edge.
\end{corollary}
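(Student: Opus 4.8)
The plan is to read the combinatorial structure of $G$ off the fact that $\Sigma$ is a finite topological tree, using the local description of $\Sigma$ supplied by the preceding lemmas. First I would record the global decomposition: by the discussion following Lemma~\ref{accum}, $\Sigma$ is a finite union of segments and arcs of $M_r$, and (by Lemma~\ref{hex}) its part in $\Int(N_r)$ is a union of interiors of chords of $M_r$; hence $\Sigma$ is the union of the elements of $V(G)$ together with all chords of $M_r$ contained in $\Sigma$. Since $\Sigma$ has no loops (Lemma~\ref{cycle}) and finitely many pieces, it is a tree. Connectedness of $G$ is then immediate: a partition of $V(G)$ with no edges across would split $\Sigma$ into two disjoint nonempty closed parts, because two vertices meet, or are joined by a chord inside $\Sigma$, only when there is an edge between them. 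Absence of cycles is equally direct: in a shortest cycle of $G$ the vertices are pairwise disjoint except at the consecutive attachment points realizing its edges (otherwise one could shorten the cycle), so traversing each vertex along a path between its two attachment points and concatenating the connecting chords/points yields an embedded circle inside $\Sigma$, contradicting Lemma~\ref{cycle}. Thus $G$ is a tree.

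The crucial step --- and the one I expect to be the main obstacle --- is the degree bound $\deg_G(S)\le 2$ for every $S\in V(G)$, since it is here that all the local lemmas get combined. For $S\in V_C(G)$ I would argue that each edge at $S$ is attached at an \emph{entering} point of $S$ (a chord to another $V_C$-vertex ends on $S\cap M_r$; a nonempty intersection of $S$ with any other vertex lies on $M_r$, since closures of distinct components of $\Sigma\setminus N_r$ meet only on $M_r$, and a maximal arc of $M_r$ meets $S$ only at an end), and that each entering point $x$ carries at most one edge: if a chord ends at $x$ then by Lemma~\ref{bpMr} $\Sigma$ is a single segment near $x$, so (by convexity of $N_r$) it continues through $x$ straight to the outside, no arc passes through $x$, and nothing else is attached there; $x$ cannot be a common end of two maximal arcs (they would merge); and $x$ cannot be a meeting point of $S$ with two other components, because $\Sigma$ has degree at most $2$ at $x$ --- this uses the line-segment/tripod structure of $S_\Sigma$ together with Lemma~\ref{hex} (no Steiner point on $S_\Sigma\cap M_r$) when $x$ is non-energetic, and Lemma~\ref{debil} (all branches of $\Sigma$ at $x$ share a tangent line), embeddedness, and convexity of $N_r$ when $x$ is energetic. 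Hence $\deg_G(S)\le m(S)\le 2$ by Lemma~\ref{compcon}. For $\breve{[BC]}\in V_A(G)$ the definition of $E(G)$ permits edges only to $V_C$-vertices, and by Lemma~\ref{27} the arc is continued at each of its two ends by a single tangent segment, which lies outside $N_r$ and hence in the closure of exactly one component of $\Sigma\setminus N_r$; so $\deg_G(\breve{[BC]})\le 2$.

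A connected finite tree of maximal degree at most $2$ is a path, so it remains only to rule out $|V(G)|\le 1$. If $V(G)=\{S\}$ then $\Sigma=S$, since a chord would create a second $V_C$-vertex. If $S\in V_A(G)$, Lemma~\ref{27} gives a nondegenerate tangent segment leaving $N_r$, whose closure is a component of $\Sigma\setminus N_r$ --- a $V_C$-vertex, contradiction. If $S\in V_C(G)$, then $q_S=\overline{B_r(S)}\cap M$ would equal all of $M$ because $\Sigma=S$ covers $M$; but $q_S$ is a closed arc, i.e.\ a continuous injective image of an interval, by the observation made right after $V_C(G)$ is introduced, whereas the closed convex curve $M$ is not --- contradiction. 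Hence $|V(G)|\ge 2$, $G$ is a path with at least one edge, and therefore exactly its two endpoints have degree $1$ and every other vertex has degree $2$.
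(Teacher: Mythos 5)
Your treatment of finiteness, connectedness, acyclicity and the degree bounds is essentially the paper's own argument: chords of $M_r$ attach to $V_C(G)$-vertices only at entering points (Lemma~\ref{bpMr}), so $\deg_G(S)\le m(S)\le 2$ by Lemma~\ref{compcon}; arcs have degree at most $2$ by Lemma~\ref{27}; a cycle in $G$ would yield a loop in $\Sigma$, contradicting Lemma~\ref{cycle}; and connectedness of $\Sigma$ gives connectedness of $G$. Up to the last step there is nothing to object to.

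The gap is in your exclusion of $\#V(G)=1$. For a single vertex $S\in V_C(G)$ your contradiction is ``$q_S=M$, but $q_S$ is a closed arc and $M$ is not an injective continuous image of an interval.'' The statement ``$q_S$ is always a closed arc'' is only a passing note justified ``by connectedness of $S$'', and connectedness merely gives that $q_S$ is a closed connected subset of $M$, i.e.\ an arc \emph{or all of} $M$; moreover the paper's own proof of this corollary explicitly contemplates $q_v=M$ (identifying $v$ with the whole of $M_r$) and refutes it via Lemma~\ref{cycle}, not via ``not an arc'', so the arc terminology is not meant to exclude the full curve. Thus your argument assumes exactly what must be proved here, namely that a single component of $\Sigma\setminus N_r$ cannot satisfy $q_S=M$; the paper instead argues that such a $v$ would have $m(v)=0$ and hence, by Lemma~\ref{compcon}, be a segment, which is impossible for a component of $\Sigma\setminus N_r$ covering $M$. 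A related slip occurs in the subcase $S\in V_A(G)$: a proper arc of $M_r$ has $q_S$ a proper subarc of $M$ and so cannot cover $M$ at all, hence the only situation to exclude is $S=M_r$, the full curve; but then $S$ has no ends, and Lemma~\ref{27}, stated for an arc $\breve{[BC]}$ with two ends and producing tangent segments at those ends, does not apply. The clean way out, used in the paper, is that $S=M_r$ contains a loop, contradicting Lemma~\ref{cycle}. Both defects are repairable along the paper's lines, but as written the final step of your proof does not close.
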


\begin{proof}
First, by Lemma~\ref{accum} the graph is finite. 
By Lemma~\ref{bpMr} every chord of $M_r$ in $\Sigma$ connects exactly two vertices in $V(G)$.
Thus, the inequality $m(S) \leq 2$ (Lemma~\ref{compcon}) implies $\deg (v) \leq 2$ for $v \in V_C(G)$; for $v \in V_A(G)$ the inequality $\deg (v) \leq 2$ holds by Lemma~\ref{27}.

Note that if $(S_1, S_2) \in E(G)$ then there is a path between $S_1$ and $S_2$ in $\Sigma$  not intersecting other sets $S \in V(G)$, $S \notin \{S_1$, $S_2\}$. It means that if $G$ has a cycle $C$ then so has $\Sigma$, contradicting Lemma~\ref{cycle}.
Moreover, the path between two points in $\Sigma$ belonging to two different vertices of $V(G)$ naturally induces a path in $G$ (in fact, if a path in $\Sigma$ connects two different vertices $S_1, S_2 \in V(G)$ wihtout touching other vertices, then $(S_1, S_2) \in E(G)$; therefore for a generic path in $\Sigma$ connecting two different vertices of $G$ it is enough to split it in a finite number of paths connecting different vertices in $G$ and not passing throw other vertices).
Therefore, connectedness of $\Sigma$ gives us that $G$ is connected. We conclude that $G$ is a path.

Now we have to show that $\# V(G) > 1$. Suppose the contrary, i.e.\ $V(G) = \{v\}$. If $v \in V_C(G)$, then $m(v) = 0$, so $v$ is a segment that is impossible.
Otherwise $v$ is an arc, but $q_v = M$, so $v = M_r$ contains a loop. We got again a contradiction with Lemma~\ref{cycle}.
\end{proof}

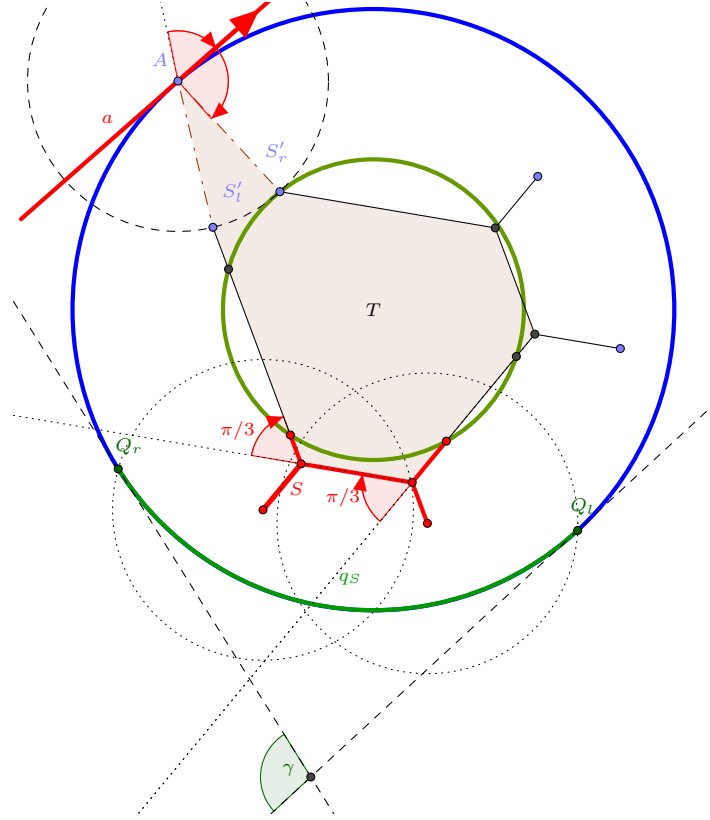
\begin{figure}
\begin{center}
	
\definecolor{zzttqq}{rgb}{0.6,0.2,0.}
\definecolor{ffffqq}{rgb}{1.,1.,0.}
\definecolor{qqzzqq}{rgb}{0.,0.6,0.}
\definecolor{qqwuqq}{rgb}{0.,0.39215686274509803,0.}
\definecolor{uuuuuu}{rgb}{0.26666666666666666,0.26666666666666666,0.26666666666666666}
\definecolor{ffqqqq}{rgb}{1.,0.,0.}
\definecolor{xdxdff}{rgb}{0.49019607843137253,0.49019607843137253,1.}
\definecolor{wwzzqq}{rgb}{0.4,0.6,0.}
\definecolor{qqqqff}{rgb}{0.,0.,1.}
\begin{tikzpicture}[line cap=round,line join=round,>=triangle 45,x=1.0cm,y=1.0cm]
\clip(-4.885214540250301,-6.699362511691325) rectangle (4.552546451354907,4.087549585829807);
\draw [shift={(-0.9614792438033364,-2.0505127981990108)},color=ffqqqq,fill=ffqqqq,fill opacity=0.1] (0,0) -- (110.43520263855103:0.6685747726686306) arc (110.43520263855103:170.435202638551:0.6685747726686306) -- cycle;
\fill[dash pattern=on 1pt off 3pt on 5pt off 4pt,color=zzttqq,fill=zzttqq,fill opacity=0.1] (-1.2431150100124204,1.570294293230513) -- (-2.5979910478844244,3.041454013315406) -- (-2.1338765110458207,1.0960497631156978) -- (-0.9614792438033364,-2.0505127981990108) -- (0.5153217823737517,-2.299361785598255) -- (2.146087477179613,-0.3256368878341969) -- (1.61939171892526,1.0879463747951914) -- cycle;
\draw [shift={(-0.8324307982627177,-6.217526912569525)},color=qqwuqq,fill=qqwuqq,fill opacity=0.1] (0,0) -- (121.99135999094486:0.6685747726686306) arc (121.99135999094486:222.75729619800924:0.6685747726686306) -- cycle;
\draw [shift={(0.5153217823737517,-2.299361785598255)},color=ffqqqq,fill=ffqqqq,fill opacity=0.1] (0,0) -- (170.435202638551:0.6685747726686306) arc (170.435202638551:230.43520263855098:0.6685747726686306) -- cycle;
\draw [line width=1.6pt,color=qqqqff] (0.,0.) circle (4.cm);
\draw [line width=1.6pt,color=wwzzqq] (0.,0.) circle (2.cm);
\draw [dash pattern=on 3pt off 3pt] (-2.5979910478844244,3.041454013315406) circle (2.cm);
\draw (-2.1338765110458207,1.0960497631156978)-- (-0.9614792438033364,-2.0505127981990108);
\draw [line width=1.6pt,color=ffqqqq] (0.5153217823737517,-2.299361785598255)-- (-0.9614792438033364,-2.0505127981990108);
\draw [line width=1.6pt,color=ffqqqq] (0.7177952270671499,-2.842774311824234)-- (0.5153217823737517,-2.299361785598255);
\draw [line width=2.pt,color=ffqqqq] (-1.4680431708373316,-2.663609969840924)-- (-0.9614792438033364,-2.0505127981990108);
\draw (0.5153217823737517,-2.299361785598255)-- (2.146087477179613,-0.32563688783419686);
\draw (2.146087477179613,-0.32563688783419686)-- (3.282526144404209,-0.5171329706289529);
\draw (2.146087477179613,-0.32563688783419686)-- (1.6193917189252605,1.0879463747951916);
\draw (1.6193917189252605,1.0879463747951916)-- (-1.2431150100124204,1.570294293230513);
\draw (1.6193917189252605,1.0879463747951916)-- (2.185436222663575,1.7730332231568222);
\draw [dotted,domain=-4.785214540250301:0.5153217823737517] plot(\x,{(--3.267462540795403--0.24884898739924433*\x)/-1.476801026177088});
\draw [dotted,domain=-4.785214540250301:0.9700516680913267] plot(\x,{(--1.3292021207872184-0.5503621397340159*\x)/-0.454729885717575});
\draw [shift={(-0.9614792438033364, -2.0505127981990108)}, <-,color=ffqqqq] (110.43520263855103:0.6685747726686306) arc (110.43520263855103:170.435202638551:0.6685747726686306);
\draw [dotted] (-1.4680431708373316,-2.663609969840924) circle (2.cm);
\draw [dotted] (0.7177952270671499,-2.842774311824234) circle (2.cm);
\draw [shift={(0.,0.)},line width=1.6pt,color=qqzzqq]  plot[domain=3.699947217622564:5.4586445782904045,variable=\t]({1.*4.*cos(\t r)+0.*4.*sin(\t r)},{0.*4.*cos(\t r)+1.*4.*sin(\t r)});
\draw [dash pattern=on 1pt off 3pt on 5pt off 4pt,color=zzttqq] (-1.2431150100124204,1.570294293230513)-- (-2.5979910478844244,3.041454013315406);
\draw [dotted, ,domain=-4:-2.5979910478844244] plot(\x,{-4.58*(\x + 2.5979910478844244) + 3.041454013315406});

\draw [shift={(-2.5979910478844244,3.041454013315406)}, <-,color=ffqqqq] (40.43520263855103:0.6685747726686306) arc (40.43520263855103:101.435202638551:0.6685747726686306);
\draw [shift={(-2.5979910478844244,3.041454013315406)}, <-,color=ffqqqq] (-48.435202638551:0.6685747726686306) arc (-48.435202638551:40.43520263855103:0.6685747726686306);
\draw [shift={(-2.5979910478844244,3.041454013315406)},color=ffqqqq,fill=ffqqqq,fill opacity=0.1] (0,0) -- (40.43520263855103:0.6685747726686306) arc (40.43520263855103:101.435202638551:0.6685747726686306) -- cycle;
\draw [shift={(-2.5979910478844244,3.041454013315406)},color=ffqqqq,fill=ffqqqq,fill opacity=0.1] (0,0) --  (-48.435202638551:0.6685747726686306) arc (-48.435202638551:40.43520263855103:0.6685747726686306) -- cycle;

\draw [dash pattern=on 1pt off 3pt on 5pt off 4pt,color=zzttqq] (-2.5979910478844244,3.041454013315406)-- (-2.1338765110458207,1.0960497631156978);
\draw [dash pattern=on 3pt off 3pt,domain=-4.785214540250301:4.552546451354907] plot(\x,{(--16.--3.392511986160677*\x)/-2.119165501737922});
\draw [dash pattern=on 3pt off 3pt,domain=-4.785214540250301:4.552546451354907] plot(\x,{(-16.--2.7155770024955532*\x)/2.9369442527084617});
\draw [dotted,domain=-4.785214540250301:2.146087477179613] plot(\x,{(--4.766823752132409-1.9737248977640582*\x)/-1.630765694805861});
\draw [shift={(0.5153217823737517, -2.299361785598255)}, <-,color=ffqqqq] (170.435202638551:0.6685747726686306) arc (170.435202638551:230.43520263855098:0.6685747726686306);
\draw [line width=1.6pt,color=ffqqqq] (0.5153217823737517,-2.299361785598255)-- (0.9700516680913267,-1.7489996458642392);
\draw [line width=1.6pt,color=ffqqqq] (-0.9614792438033364,-2.0505127981990108)-- (-1.1041569436210037,-1.6675843138664757);
\draw [line width=1.6pt, ->, color=ffqqqq,domain=-4.685214540250301:-1.5153217823737517] plot(\x,{0.88*(\x + 2.5979910478844244) + 3.041454013315406});
\draw [line width=1.6pt, color=ffqqqq,domain=-2.5153217823737517:2] plot(\x,{0.88*(\x + 2.5979910478844244) + 3.041454013315406});

\begin{scriptsize}

\draw [fill=xdxdff] (-2.5979910478844244,3.041454013315406) circle (1.5pt);
\draw[color=xdxdff] (-2.834346138932578,3.330403941939283) node {$A$};
\draw[color=ffqqqq] (-3.534346138932578,2.530403941939283) node {$a$};
\draw [fill=xdxdff] (-2.1338765110458207,1.0960497631156978) circle (1.5pt);
\draw[color=xdxdff] (-1.8657713662639475,1.5692512754446393) node {$S_l'$};
\draw [fill=ffqqqq] (-0.9614792438033364,-2.0505127981990108) circle (1.5pt);
\draw [fill=ffqqqq] (0.5153217823737517,-2.299361785598255) circle (1.5pt);
\draw [fill=ffqqqq] (0.7177952270671499,-2.842774311824234) circle (1.5pt);
\draw [fill=xdxdff] (3.282526144404209,-0.5171329706289529) circle (1.5pt);
\draw [fill=xdxdff] (2.185436222663575,1.7730332231568222) circle (1.5pt);
\draw [fill=xdxdff] (-1.2431150100124204,1.570294293230513) circle (1.5pt);
\draw[color=xdxdff] (-1.2863398966178008,2.104111093579542) node {$S_r'$};
\draw [fill=ffqqqq] (-1.4680431708373316,-2.663609969840924) circle (1.5pt);
\draw [fill=uuuuuu] (-1.9261384709982619,0.5385077441787424) circle (1.5pt);
\draw [fill=ffqqqq] (-1.1041569436210037,-1.6675843138664757) circle (1.5pt);
\draw [fill=ffqqqq] (0.9700516680913267,-1.7489996458642392) circle (1.5pt);
\draw [fill=uuuuuu] (1.900556349843245,-0.6228045930069245) circle (1.5pt);
\draw[color=ffqqqq] (-1.8089138889970844,-1.6036225878985044) node {$\pi/3$};
\draw [fill=qqwuqq] (2.7155770024955532,-2.9369442527084617) circle (1.5pt);
\draw[color=qqwuqq] (2.7811095196833305,-2.6204839666121007) node {$Q_l$};
\draw [fill=qqwuqq] (-3.392511986160677,-2.119165501737922) circle (1.5pt);
\draw[color=qqwuqq] (-3.2474925631124507,-1.8181942394097461) node {$Q_r$};
\draw[color=qqzzqq] (-0.30576356337047583,-3.5787744741038017) node {$q_S$};
\draw [fill=uuuuuu] (2.146087477179613,-0.3256368878341969) circle (1.5pt);
\draw [fill=uuuuuu] (1.61939171892526,1.0879463747951914) circle (1.5pt);
\draw [fill=uuuuuu] (-0.8324307982627177,-6.217526912569525) circle (1.5pt);
\draw[color=qqwuqq] (-1.12747695732550768,-6.120503473845768) node {$\gamma$};
\draw[color=ffqqqq] (-1.0248127698768492,-2.37533988330027) node {$S$};
\draw[color=ffqqqq] (-0.4048127698768492,-2.49533988330027) node {$\pi/3$};
\draw[color=black] (0,0) node {$T$};

\end{scriptsize}
\end{tikzpicture}
\caption{Figure to the construction of $T$.}
\label{defT}
\end{center}
\end{figure}

Thus under conditions of Theorem~\ref{theo1} there are two connected components of $\Sigma \setminus N_r$ with one entering point; 
these components correspond to the leaves of our graph. 
We call them \textit{ending components} and denote by $S_l$ and $S_r$
(calling them \emph{left} and \emph{right} respectively); the 
other components will be called \textit{middle components}.

By Lemma~\ref{dugi} every point of $M$ is covered by at most two sets from $V(G)$.
By Corollary~\ref{co_graph1} graph $G$ is a path, so if $S_1$, $S_2$ are connected by an edge in $G$, then $q_{S_1} \cap q_{S_2} \neq \emptyset$. 
Moreover, the same reasoning gives $q_{S_l} \cap q_{S_r} \neq \emptyset$, because otherwise there would be some part of $M$ not covered by $\Sigma$.

\begin{lemma}
The arcs $q_{S_l}$ and $q_{S_r}$ have disjoint interiors.
\label{disj}
\end{lemma}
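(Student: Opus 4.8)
The plan is to argue by contradiction: assuming $\Int(q_{S_l})\cap\Int(q_{S_r})\neq\emptyset$, I would build a strictly shorter closed connected set with energy at most $r$, contradicting that $\Sigma\in OPT_{\infty}^{*}(M,r)$.

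\emph{Step 1 (the covering is a cyclic chain of arcs).} By Corollary~\ref{co_graph1} the graph $G$ is a path; list its vertices in order as $S_l=S_1,S_2,\dots,S_n=S_r$. I would use the facts already at hand: each $q_{S_i}$ is a closed arc of $M$; $q_{S_i}\cap q_{S_{i+1}}\neq\emptyset$ and $q_{S_l}\cap q_{S_r}\neq\emptyset$; $\bigcup_i q_{S_i}=M$; every point of $M$ lies in at most two of the $q_{S_i}$; and $q_{S_i}\not\subset\bigcup_{j\neq i}q_{S_j}$ by Lemma~\ref{dugi}. From these I would deduce, purely topologically, that \emph{one endpoint $a$ of the arc $q_{S_l}$ lies in $\Int(q_{S_r})$}. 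The elementary ingredient is: if two arcs of a circle are such that neither contains the other, then a nonempty intersection contains an endpoint of each. Hence $q_{S_1}\cap q_{S_2}$ abuts an endpoint of $q_{S_1}$; it cannot abut both, since the other endpoint of $q_{S_1}$ also meets $q_{S_n}$ (as $q_{S_1}\cap q_{S_n}\neq\emptyset$ likewise abuts an endpoint of $q_{S_1}$), and then a neighbourhood of it would be covered by the three distinct sets $S_1,S_2,S_n$ when $n\geq3$ (for $n=2$ the conclusion is immediate from $q_{S_1}\cup q_{S_2}=M$ and non-containment). So $q_{S_1}\cap q_{S_n}$ abuts the remaining endpoint $a$ of $q_{S_1}$, giving $a\in q_{S_r}$, and $a$ cannot be an endpoint of $q_{S_r}$: one endpoint of $q_{S_r}$ is the far end of the positive-length arc $q_{S_1}\cap q_{S_n}$ (so $\neq a$), and the other meets $q_{S_{n-1}}$, where $a$ would again produce triple coverage. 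Thus $a\in\Int(q_{S_r})$.

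\emph{Step 2 (local surgery at the end governed by $a$).} Since $a$ is an endpoint of $q_{S_l}=\overline{B_r(S_l)}\cap M$ we have $\dist(a,S_l)=r$. By Lemma~\ref{compcon}, $S_l$ is a locally minimal network whose only leaves are its single entering point and its (one or two) energetic points, and by Lemmas~\ref{bpMr} and~\ref{27} each pendant segment of $S_l$ lies on a line through $M_r$ prolonging the adjacent chord or tangent. I would show that the end of $q_{S_l}$ at $a$ is governed by an energetic leaf: there is an energetic point $x$ at the tip of a pendant segment $[yx]\subset S_l$ with $Q_x=a$ (using that $B_r(a)\cap\Sigma=\emptyset$ by property~(a) while every interior point of $q_{S_l}$ lies within distance $<r$ of $S_l$, which pins $Q_x$ to the endpoint $a$). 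Then, for small $\delta>0$, let $x'\in[yx]$ with $|xx'|=\delta$ and let $\Sigma'$ be $\Sigma$ with $[yx]$ replaced by $[yx']$: it is closed and connected (only a pendant subsegment was shortened) and $\H(\Sigma')=\H(\Sigma)-\delta$. The only points of $M$ that $\Sigma'$ could fail to cover are those whose sole $r$-neighbour in $\Sigma$ lay on $]x'x]$; the straightness of $[yx]$ forces these to form an arc shrinking to $\{a\}$ as $\delta\to0$, so for $\delta$ small enough they all lie in $\Int(q_{S_r})$ and remain covered by $S_r\subset\Sigma'$. Hence $F_M(\Sigma')\leq r$, contradicting minimality; therefore $q_{S_l}$ and $q_{S_r}$ have disjoint interiors.

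\emph{Where the difficulty lies.} Step~1 is routine but requires care with degenerate cases ($n=2$, and overlaps of zero length at the ends of the arcs). The real work is in Step~2: identifying $a$ with $Q_x$ for an energetic pendant tip $x$ — equivalently, excluding the configuration in which the nearest point of $S_l$ to the extremal endpoint $a$ is an interior point of a segment, which one rules out using minimality together with the straightness Lemmas~\ref{bpMr} and~\ref{27} — and then controlling the coverage lost under the retraction. The latter is delicate because $x$ need not lie on $M_r$, so it covers a whole sub-arc of $M$; one must use the direction of the pendant segment and the emptiness of $B_r(a)$ to see that the retraction erodes $q_{S_l}$ only near the endpoint $a$, and not near the other point of $\partial B_r(x)\cap M$, which is interior to $q_{S_l}$.
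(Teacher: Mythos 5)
There is a genuine gap, and it sits exactly where you locate ``the real work''. Your Step~2 hinges on the claim that the endpoint $a$ of $q_{S_l}$ lying in $\Int(q_{S_r})$ is the canonical point $Q_x$ of an \emph{energetic} pendant tip $x$ of $S_l$, justified ``using that $B_r(a)\cap\Sigma=\emptyset$ by property (a)''. This is circular (property (a) gives $B_r(Q_x)\cap\Sigma=\emptyset$ only once you know $a=Q_x$ for an energetic $x$) and, worse, it is false precisely under the hypothesis you are trying to contradict: if $a\in\Int(q_{S_r})$ then $a$ is covered by $S_r$, so $B_r(a)\cap\Sigma\neq\emptyset$ and no energetic point of $S_l$ can have $a$ as its canonical point. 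Indeed this observation is the paper's \emph{first} step: by Remark~\ref{corr} it forces $n(S_l)=1$, so $S_l$ is a single segment $[C_lV_l]$ whose unique energetic tip $V_l$ corresponds to the \emph{other} endpoint $Q_l^{S_l}$ of $q_{S_l}$, while the end at $a$ is attained tangentially by an \emph{interior} point $W_l$ of the segment (the segment $[C_lV_l]$ is collinear with $Q_l^{S_l}$ and tangent to $B_r(a)$). In that configuration your surgery has nothing to cut: trimming the pendant tip $V_l$ destroys coverage near $Q_l^{S_l}$, which nothing else covers, and the redundantly covered point $a$ is served by an interior point of the segment, which cannot be removed without disconnecting $\Sigma$. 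Your closing remark that one can ``exclude the configuration in which the nearest point of $S_l$ to $a$ is an interior point of a segment'' by minimality and the straightness lemmas is exactly the configuration that actually occurs, and Lemmas~\ref{bpMr} and~\ref{27} (which concern chords of $M_r$ and arcs in $V_A(G)$, not pendant segments of an ending component) do not rule it out.

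Because of this, no local trimming argument suffices, and the paper's proof is necessarily different in kind: having reduced to $S_l=[C_lV_l]$ and $S_r=[C_rV_r]$ tangent to $B_r(Q_r^{S_l})$ and $B_r(Q_l^{S_r})$ respectively, it runs a quantitative angle computation (the angles at $Q_l^{S_l}$ and $Q_r^{S_r}$ are at most $\pi/4$, whence $\angle C_lOC_r\geq\pi/2$ and $|C_lO|,|C_rO|<2r$ for the intersection point $O$ of the two lines) to conclude that the rays $[V_lC_l)$ and $[V_rC_r)$ would meet inside $N_r$, contradicting optimality. Your Step~1 (an endpoint of one arc lies in the interior of the other) is essentially fine, but Step~2 as written does not close the argument and cannot be repaired by purely local surgery.
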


Denote by $A$ an arbitrary point of the intersection of $q_{S_l}$ and $q_{S_r}$ (see Fig.~\ref{defT});
by Lemma~\ref{disj} there are at most 2 such points.
Consider the set $\hat \Sigma := \Sigma \cup [AS_l'] \cup [AS_r']$, where 
$[AS_l']$ and $[AS_r']$ are segments of length $r$ connecting $A$ with $S_l$ and $S_r$ respectively. 
In view of Lemma~\ref{cycle} and the fact that $B_r(A) \cap \Sigma = \emptyset$, the set $\hat \Sigma$ bounds the unique region which we further denote by $T$ (see Fig.\ref{defT}). 

Previous Lemmas give us the following corollary.
\begin{corollary}
The boundary of $T$ is a closed curve consisting of a finite number of arcs of $M_r$ and a finite number of line segments.
\label{col27}
\end{corollary}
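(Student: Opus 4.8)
The plan is to read off the structure of $\partial T$ from that of $\hat\Sigma$, viewed as a finite embedded graph. By the discussion preceding Lemma~\ref{bpMr}, $\Sigma$ is a union of finitely many line segments and finitely many arcs of $M_r$; adjoining the two segments $[AS_l']$ and $[AS_r']$ preserves this, so $\hat\Sigma$ is again such a finite union and may be regarded as a finite embedded graph each of whose edges is a line segment or a (sub)arc of $M_r$. Since $\Sigma$ has finite length it is path-connected, and by Lemma~\ref{cycle} it has no loops, so $\Sigma$ is a topological tree. As $A\notin\Sigma$ (because $B_r(A)\cap\Sigma=\emptyset$) while $S_l'\in S_l\subset\Sigma$ and $S_r'\in S_r\subset\Sigma$, the graph $\hat\Sigma$ is obtained from the tree $\Sigma$ by adjoining a new vertex $A$ and the two edges $[AS_l']$, $[AS_r']$ (subdividing $\Sigma$ at $S_l'$, $S_r'$ if necessary), so it contains exactly one cycle, namely $\gamma=[AS_l']\cup P\cup[S_r'A]$, where $P\subset\Sigma$ is the unique arc joining $S_l'$ to $S_r'$. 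Each constituent of $P$ is a segment of $\Sigma$ or an arc of $M_r$, and there are finitely many, so already $\gamma$ is a closed curve of the required form.

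Now $T$ is, by its definition in the text, the unique bounded connected component of $\R^2\setminus\hat\Sigma$, i.e.\ the unique bounded face of the finite connected plane graph $\hat\Sigma$; hence $\partial T\subset\hat\Sigma$ and $\partial T$ is traversed by a closed walk along the edges of $\hat\Sigma$. Since every such edge is a line segment or an arc of $M_r$ and there are finitely many of them, $\partial T$ is a closed curve built from finitely many line segments and finitely many arcs of $M_r$, which is the assertion. Concretely, writing $\hat\Sigma=\gamma\cup\tau_1\cup\dots\cup\tau_k$ with $\tau_1,\dots,\tau_k$ the closures of the connected components of $\hat\Sigma\setminus\gamma$ — each a finite subtree of $\Sigma$ glued to $\gamma$ at a single point (built from the segments of the components $S\in V_C(G)$ that do not lie on $P$) — one has, $\gamma$ bounding a Jordan domain $D$, that each $\tau_j$ lies wholly in $\overline D$ or wholly outside $D$, that $T=D\setminus\bigcup_{\tau_j\subset\overline D}\tau_j$, and that $\partial T=\gamma\cup\bigcup_{\tau_j\subset\overline D}\tau_j$, this walk following $\gamma$ and making an out-and-back excursion along each interior $\tau_j$.

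Two side points need checking, and the second is where the only real effort lies. First, that $\gamma$ is actually a \emph{simple} closed curve: the segments $[AS_l']$ and $[AS_r']$ meet $\Sigma$ only at $S_l'$, $S_r'$ (immediate from $|AS_l'|=|AS_r'|=r$ and $B_r(A)\cap\Sigma=\emptyset$) and meet each other only at $A$ (because $S_l\neq S_r$, being the two distinct leaves of the path $G$, cf.\ Corollary~\ref{co_graph1}), while $P$ is a simple arc by the absence of loops. Second — not needed for the corollary as stated, but the natural strengthening — one would like each dangling subtree $\tau_j$ to lie outside $\overline D$, so that $\partial T=\gamma$ is a genuine Jordan curve: this is the step where the geometry enters, using that each energetic endpoint $x$ of such a $\tau_j$ carries an empty ball $\overline{B_r(Q_x)}$ on the $M$-side and that each component $S\in V_C(G)$ lies to one side of each of its bounding chords of $M_r$, so its dangling segments point away from $N_r$ and leave $D$. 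I expect the routine but slightly fiddly bookkeeping about these dangling subtrees to be the main obstacle; everything else is a direct appeal to planarity together with the finiteness and tree structure already established.
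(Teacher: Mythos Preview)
Your proposal is correct and matches the paper's intended argument: the paper gives no explicit proof of this corollary, merely stating that ``Previous Lemmas give us the following corollary,'' and you have correctly identified the inputs it has in mind --- the finiteness of segments and arcs in $\Sigma$ established after Lemma~\ref{accum}, the tree structure of $\Sigma$ from Lemma~\ref{cycle}, and the fact that $B_r(A)\cap\Sigma=\emptyset$ so that adjoining $[AS_l']$ and $[AS_r']$ creates exactly one cycle. Your closing discussion of whether the dangling subtrees lie inside or outside the Jordan domain $D$ goes beyond what the corollary asserts, but it is a sensible observation: the paper implicitly uses the simple-closed-curve structure of $\partial T$ in the proof of Theorem~\ref{theo1} when it writes $\turn(\partial T)=2\pi$, and the geometric argument you sketch (energetic endpoints carry empty balls on the $M$-side) is indeed how one would justify that no such subtree protrudes into $T$.
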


Consider the behavior of the tangent line to the boundary of $T$. 
Corollary~\ref{col27} and Lemma~\ref{debil} imply that all points where tangent direction is discontinuous (i.e.\ points where the tangent line to $\partial T$ does not exist) except $A$ belong to connected components of $\Sigma \setminus N_r$.

\begin{definition} 
Let $\gamma$ be a $C^1$-smooth injective planar curve. We say that the \textit{turning} of $\gamma$ is the following object:
$$\turn (\gamma) := \int_{0}^{1} d\arg(\gamma'(t)),$$ where $\gamma: [0, 1] \to \mathbb R^2$ is some injective parameterization of $\gamma$ with $\gamma'(t) \neq 0$ and $\arg$ 
is a continuous branch of the multifunction $Arg$. 

Let $\gamma : [0,1] \rightarrow \mathbb{R}^2$ be a piecewise $C^1$-smooth injective planar curve, with a finite number of discontinuity points $\{t_i\}_{i=1}^{N}$ of $\gamma'$, $t_i < t_j$ if $i<j$ (it means that $\gamma$ is $C^1$-smooth on every $[t_it_{i+1}]$). 
We define 
$$\turn (\gamma) := \sum_{i=1}^N \turn(\gamma ([t_i, t_{i+1}])) + \sum_{i=1}^N \angle ([\gamma(t_i - 0) \gamma(t_i)), [\gamma(t_i) \gamma(t_i + 0))).$$

Let $\gamma : [0,1] \rightarrow \mathbb{R}^2$ be a simple closed (i.e.\ $\gamma(0) = \gamma(1)$) piecewise $C^1$-smooth planar curve, with a finite number of discontinuity points of $\gamma'$ $\{t_i\}_{i=1}^{N}$, $t_i < t_j$ if $i<j$. 
We define
$$\turn (\gamma) := \sum_{i=1}^N \turn(\gamma ([t_i, t_{i+1}])) + \sum_{i=1}^N \angle ([\gamma(t_i - 0) \gamma(t_i)), [\gamma(t_i) \gamma(t_i + 0))) + \angle ([\gamma(1-0) \gamma(1)), [\gamma(0) \gamma(0 + 0))).$$
\end{definition}

In our setting, the turning of an open curve $\gamma$ will almost always coincide with the directed angle between the tangent lines to the ends of $\gamma$. 
Note that for a self-avoiding closed piecewise $C^1$-smooth planar curve $\gamma$ we always have $\turn (\gamma) = 2\pi$ (for a parameterization with respect to the clockwise orientation).

Now we define the same quantity for the closure of a connected component of $\Sigma \setminus N_r$.

\begin{definition}
Under conditions of Theorem~\ref{theo1} let $S$ be the closure of a connected component of $\Sigma \setminus N_r$. Then $\turn (S)$ stands for the turning number of the $S \cap \partial T$ parameterized in the clockwise order. In particular, if $S=S_l$ then $\turn (S)$ stands for the turning number of the curve $S\cap \partial T$ parameterized so that it starts at the entering point and ends at point $S_l'$,
and if $S=S_r$ then $\turn (S)$ stands for the turning number of $S\cap\partial T$ parameterized so that it starts at point $S_r'$ and ends at the entering point.
\end{definition}

Now we are ready to state the central Lemma. Figure~\ref{defT} should simplify the reading of its statement.

    \begin {lemma}   \label{central}
    Under conditions of Theorem~\ref{theo1} let $\Sigma \in OPT_{\infty}^{*}(M, r)$ be a minimizer, 
    $S \in V(G)$ be the closure of a connected component of $\Sigma \setminus N_r$ or an arc of $M_r$. 
    Then the following assertions hold.
    \begin {itemize}
    \item If $S$ is a middle component or an arc of $M_r$ then $\turn (q_S)\leq \turn (S)$. 
    The equality holds if and only if $S$ is an arc of $M_r$.
    \item If $S$ is an ending component then for the left and the right components we have
    $$\turn (q_{S_l}) \leq \turn (S_l) + \angle([C_lS_l'), [S_l'A)) + \angle ([S_l'A), a),$$
    $$\turn (q_{S_r}) \leq \angle (a, [AS_r')) + \angle([AS_r'), [S_r'C_r)) + \turn (S_r),$$
    where $a$ stands for the tangent ray to $M$ at the point $A$ directed from the left to the right (see Fig.~\ref{defT}, angles $\angle ([S_l'A), a)$, $\angle (a, [AS_r'))$ are marked red) and $C_i$ is the branching point of $S_i$ if $S_i$ is a tripod and the entering point of $S_i$ in other cases, where $i \in \{l, r\}$ (the definition is correct by Lemma~\ref{compcon}). 
    The equality holds if and only if $S$ is a segment of the tangent line to $M_r$.
    \end {itemize}
    \end {lemma}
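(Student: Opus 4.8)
The plan is to relate $\turn(q_S)$ to $\turn(S)$ via the family of normal segments $[xQ_x]$, $x \in S \cap M_r$, which rigidly carry points of $M_r$ to points of $M$ and which, at the energetic endpoints of $S$, become the two ``legs'' $[xQ_x]$ realizing the covering condition (property (a) of energetic points). The key geometric fact I would isolate first is a \emph{monotonicity principle}: if one walks along $\partial T$ from one energetic point of $S$ to the next (in clockwise order), then the direction of the outward normal ray pointing from $\Sigma$ toward $M$ — equivalently the direction of $[xQ_x]$ where this makes sense, and the direction of the leg $[x Q_x]$ at energetic points — turns by \emph{at least} as much as the tangent direction to $M$ does along the covered arc $q_S$. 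Concretely, for a point $C \in M_r \cap \Sigma$ the segment $[CQ_C]$ is normal to $M_r$ at $C$ and to $M$ at $Q_C$ (stated in the Notation section), so along a maximal arc of $M_r$ inside $S$ the turning of $q_S$ equals the turning of that sub-arc of $M_r$ exactly, and the factor by which $M_r$'s turning relates to $M$'s is controlled because both are convex and $M_r = \partial N_r$. On the straight pieces of $S$ (chords, tangent continuations, tripod legs) the covered arc $q_S$ does not grow at all while $\turn$ of $S$ can only be nonnegative by convexity of the picture, so these pieces can only help the inequality; and at a Steiner point of $S$ the $2\pi/3$ exterior angle is a genuine positive contribution to $\turn(S)$ with no counterpart in $\turn(q_S)$.

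The second ingredient is the \emph{endpoint bookkeeping}. For a middle component $S$ the relevant portion of $\partial T$ runs between its two entering points, and the legs $[xQ_x]$ at the (at most two, by Lemma~\ref{compcon}) energetic points of $S$ supply exactly the missing rotation needed to pass from ``direction of $\partial T$'' to ``direction of $\partial M$'' at each end; summing the turning of $\partial T$ across $S$ with these two boundary angle corrections yields $\turn(S) \geq \turn(q_S)$, and equality forces every straight piece and every Steiner point to contribute nothing, i.e.\ $S$ is a single arc of $M_r$ (which is the case $S$ is itself an arc). For the ending components one does the same computation but now one end of the walk is the artificial point $A$ on $M$ together with the artificial segment $[AS_l']$ (resp.\ $[AS_r']$) of length $r$ that we appended to build $T$; hence instead of a single leg-angle at that end one picks up the three directed angles $\angle([C_l S_l'),[S_l'A))$, $\angle([S_l'A),a)$ (and symmetrically on the right), the first measuring how the last edge of $S_l$ meets the artificial segment, the second how the artificial segment meets the tangent $a$ to $M$ at $A$. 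Here $C_i$ is the branching point if $S_i$ is a tripod and the entering point otherwise, exactly as in the statement; the case analysis over the seven combinatorial types of locally minimal networks on at most four points (Figures~\ref{lmn23}, \ref{lmn4}) together with $n(S_i)=1$, $m(S_i)=1$ from Lemma~\ref{compcon} shows $C_i$ is well defined and that the walk along $S_i \cap \partial T$ from the entering point (or from $S_i'$) passes through $C_i$ before reaching the energetic point.

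I would organize the write-up as: (1) reduce to computing $\turn$ of the closed curve $\partial T$ piece by piece, using Corollary~\ref{col27} and Lemma~\ref{debil} to guarantee $\partial T$ is piecewise $C^1$ with corners only at $A$ and inside the components $S \in V_C(G)$; (2) prove the local monotonicity lemma on a single maximal arc of $M_r$ (pure convex geometry, comparing $\turn$ of an arc of $M_r$ with $\turn$ of the corresponding arc of $M$ cut out by normals), then observe the straight pieces contribute $q$-turning zero; (3) account for the energetic-point legs using property (a) and Lemma~\ref{27} (which says the arc of $M_r$ is continued by tangent segments) to see the leg direction interpolates correctly between the incoming edge direction and the normal to $M$; (4) assemble the middle-component inequality and read off the equality case; (5) repeat for $S_l$, $S_r$ inserting the $A$-corrections and doing the finite combinatorial check for the position of $C_i$. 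The main obstacle I anticipate is step (3): carefully signing the directed angles so that the ``legs at energetic points'' genuinely supply a \emph{nonnegative} defect and not an overcount — in particular handling the case where an energetic point of $S$ lies on $M_r$ itself (so its leg is the normal and there is no extra edge) versus lies strictly outside $N_r$ (two incident edges of $S$, with the leg emanating between them), and checking that no cancellation occurs when $S$ has two energetic points on the same side. Getting the orientation conventions (clockwise, directed angles $\angle(\cdot,\cdot)$) consistent throughout this bookkeeping is where the real care is needed; the rest is convex geometry and a short enumeration.
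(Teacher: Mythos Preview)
Your proposal has a genuine gap, and it stems from a structural misunderstanding of what $S\in V_C(G)$ looks like. When $S$ is the closure of a connected component of $\Sigma\setminus N_r$, it contains \emph{no} arcs of $M_r$ whatsoever: by definition $S\cap M_r$ consists only of the entering points (at most two isolated points, by Lemma~\ref{compcon}). So $S$ is a locally minimal network made entirely of line segments, and your decomposition into ``arc pieces where turning matches'' and ``straight pieces where $q_S$ does not grow'' collapses --- there are no arc pieces, and the \emph{entire} covered arc $q_S$ is produced by the straight segments. Relatedly, the ``outward normal field $[xQ_x]$'' you want to track is only defined at energetic points and at points of $M_r$; along a generic point of a straight edge of $S$ there is no canonical $Q_x$, so the monotonicity principle as stated is not well-posed.

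More decisively, your argument never invokes the quantitative hypothesis $R>5r$, yet the inequality $\turn(q_S)<\turn(S)$ is \emph{false} without it. In the paper's case~(1b) (combinatorial type~(b) of Fig.~\ref{lmn4}) one has $\turn(S)=\pi/3$, and an explicit computation of the extremal covered arc gives
\[
\turn(q_S)\ \le\ \arccos\!\Big(1-\tfrac{1}{\delta}\Big)+\tfrac{\pi}{6}-\arcsin\!\Big(\tfrac{1}{2}\big(1-\tfrac{1}{\delta}\big)\Big),\qquad \delta=R/r,
\]
which is $<\pi/3$ only for $\delta\ge 2.9$. So any correct proof must consume the ratio hypothesis somewhere, and a purely sign-based monotonicity bookkeeping cannot close the argument. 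The paper proceeds instead by an exhaustive case analysis over the finitely many combinatorial types of $S$ (Figs.~\ref{lmn23},~\ref{lmn4}), in each case building an auxiliary polygon with vertices among $O$, $Q_l$, $Q_r$, the Steiner and energetic points of $S$, and deriving a contradiction from angle sums; the key extra ingredient you are missing is Lemma~\ref{tech} (at an energetic point $W$ with two incident edges, the line $(Q_WW)$ bisects the angle), which is what makes the polygon angles computable in cases~(1c),~(1d). Your step~(3) is gesturing at this, but the actual content is a finite list of trigonometric checks, not a uniform sign argument.
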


\begin{remark}
If in Lemma~\ref{central} we assume that $\Sigma$ has no Steiner points in $N_r$ then it is enough to request the inequality $r < R/2.9$ (see proof of Lemma~\ref{central}, Case~1a).
\end{remark}

Now the proof of Theorem~\ref{theo1} is just few lines.

\begin{proof}[Proof of Theorem~\ref{theo1}]
By Lemma~\ref{global_lm}(iii) $2a_M (r) + r < 5r$ for general $M$, and $2a_M (r) + r < 4.98r$ when $M$ is the circumference.
Note that 
\[ 2\pi = \turn (\partial T) = \sum_{S \in V(G)} \turn (S) + \angle([C_lS_l'), [S_l'A)) + \angle ([S_l'A), a) + \angle([AS_r'), [S_r'C_r)) + \angle (a, [AS_r'))\] 
by Lemma~\ref{27} and Lemma~\ref{debil}, and also $\turn (M)=2\pi$.
Hence by Lemma~\ref{central} 
\begin{align*}
    2\pi &= \sum_{S \in V(G)} \turn (S) + \angle([C_lS_l'), [S_l'A)) + \angle ([S_l'A), a) + \angle([AS_r'), [S_r'C_r)) + \angle (a, [AS_r')) \\
    &  \geq \sum_{S \in V(G)} \turn (q_S) \geq \turn (M) = 2\pi.
\end{align*}
Thus all the inequalities in Lemma~\ref{central} are equalities. Summing up, every global minimizer $\Sigma \in OPT_{\infty}^{*}(M, r)$ consists of arcs of $M_r$ and 
segments of tangent lines to $M_r$, i.e.\ components of the combinatorial type (a) on Fig.~\ref{lmn23}, tangent to $M_r$.
Every vertex, corresponding to a component of the combinatorial type (a) on Fig.~\ref{lmn23} has degree 1 in $G$. Thus $\Sigma$ has the unique arc of $M_r$, and because of the absence of loops it cannot coincide with $M_r$. 
By Lemma~\ref{27} every maximal arc $\breve{[BC]} \in V_A(G)$ is connected in the graph $G$ with two vertices, corresponding to connected components of $\Sigma \setminus N_r$.
Hence any minimizer is a horseshoe. 
\end{proof}

	\section {Proofs} \label{pr}

Recall that $\Sigma$ is an arbitrary minimizer for some convex closed curve $M$ and $N = \text{conv}\,(M)$.    
Clearly $\Sigma\subset N$ ($N$ is a convex set, so one can project on $N$ the part of $\Sigma$ belonging to $\R^2 \setminus N$ on $N$ and length of $\Sigma$ will strictly decrease). 

The following well-known fact will be used during the proof.

\begin{lemma}
Let $M$ be a convex closed curve with minimal radius of curvature $R$ and $B_R(O)$ be a ball of radius $R$ centered at point $O \in N$. 
If $\partial B_R(O)$ touches $M$ (tangentially to $M$), then $B_R(O) \subset N$.
\label{circ_curv}
\end{lemma}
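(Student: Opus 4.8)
\emph{Plan.} Let $y$ be a point where $\partial B_R(O)$ touches $M$ tangentially. The first step is to pin down the center $O$. A circle cannot be internally tangent to a convex curve at a corner; in fact the hypothesis $R=R(\partial N)>0$ already rules out corners of $M$ altogether (near a corner of interior angle $<\pi$ no positive-radius disk can sit inside $N$ touching $M$ only at that corner, so the inner supremum in the definition of $R(\partial N)$ would vanish there). Hence $M$ is $C^1$ at $y$ and has a unique supporting line $\ell$ of $N$ there. Since $\partial B_R(O)$ is tangent to $\ell$ at $y$, the center $O$ lies on the line through $y$ perpendicular to $\ell$ at distance $R$, and since $O\in N$ it lies on the side of $\ell$ containing $N$; thus $O$ is exactly the point at distance $R$ from $y$ along the inward normal to $M$ at $y$.

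Next I unwind the definition of $R(\partial N)$. From $R(\partial N)=R$ we obtain, for the point $y$, that $\sup\{\rho:\ \overline{B_\rho(O')}\cap M=\{y\}\ \text{for some}\ O'\in N\}\ge R$. Fix any admissible $\rho$ with witness $O'$. I claim (i) $\overline{B_\rho(O')}\subset N$, and (ii) $O'=O_\rho$, the point at distance $\rho$ from $y$ along the inward normal. For (i): the set $\overline{B_\rho(O')}\setminus\{y\}$ is connected and disjoint from $M=\partial N$; it contains $O'$, which must lie in $\Int N$ (if $O'\in M$ then $O'\in\overline{B_\rho(O')}\cap M=\{y\}$, but a small disk about a boundary point of a planar convex body with nonempty interior meets $\partial N$ in more than one point); hence $\overline{B_\rho(O')}\setminus\{y\}\subset\Int N$, so $\overline{B_\rho(O')}\subset N$. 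For (ii): $\ell$ supports $N$ and therefore supports the disk $\overline{B_\rho(O')}\subset N$, and it passes through the boundary point $y$ of that disk; so $\ell$ is the tangent line to $\partial B_\rho(O')$ at $y$, whence $O'y\perp\ell$, and since $O'\in\Int N$ this forces $O'=O_\rho$. Finally, using (ii) one sees that the set of admissible $\rho$ is downward closed, hence contains all of $(0,R)$, so such a disk $\overline{B_\rho(O_\rho)}$ exists for every $\rho<R$.

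To conclude: the disks $\overline{B_\rho(O_\rho)}$ with $\rho\uparrow R$ are nested (all tangent to $\ell$ at $y$ from the $N$-side), each is contained in $N$ by the previous step, and $O_\rho\to O$, so their union is the open disk $B_R(O)$; since $N$ is closed this gives $\overline{B_R(O)}\subset N$, as desired. The only genuinely delicate point is the bookkeeping of the supremum in the definition of $R(\partial N)$: it need not be attained at $y$, so one cannot simply assert the existence of an inscribed radius-$R$ disk tangent at $y$ and must instead realize it as a limit of slightly smaller inscribed disks, which is precisely what the nesting/exhaustion argument does. (If the supremum happens to be attained at $y$, then by (ii) the witness disk is $B_R(O)$ itself and the proof is immediate.)
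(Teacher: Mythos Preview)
The paper does not prove this lemma: it is introduced with the phrase ``The following well-known fact will be used during the proof'' and then simply stated. So there is no approach to compare against.

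Your argument is correct and is exactly the right way to extract the statement from the paper's own definition of the minimal radius of curvature
\[
R(\partial N)=\inf_{x\in\partial N}\sup\{\rho:\overline{B_\rho(O')}\cap\partial N=\{x\}\text{ for some }O'\in N\}.
\]
The two substantive steps are (i) that any admissible disk $\overline{B_\rho(O')}$ at $y$ must lie in $N$ and be internally tangent to $M$ at $y$, forcing $O'=O_\rho$ on the inward normal, and (ii) that since the supremum at $y$ is at least $R$ but need not be attained, one realizes $B_R(O)$ as the increasing union of the disks $\overline{B_\rho(O_\rho)}$ for $\rho<R$, each contained in $N$. Your remark that $R>0$ forces $M$ to be $C^1$ (no corners) is also correct and is what makes the tangent line at $y$ well defined so that $O$ can be identified with $O_R$.

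One cosmetic point: the union $\bigcup_{\rho<R}\overline{B_\rho(O_\rho)}$ is $B_R(O)\cup\{y\}$, not the full closed disk; but your last sentence already handles this, since $B_R(O)\subset N$ and $N$ closed give $\overline{B_R(O)}\subset N$.
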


Further on we assume by default $M$ and $\Sigma$ are as in Theorem~\ref{theo1}. Sometimes we will request weaker conditions.

The following assertion is valid.
   
\begin{lemma}
Let $M$ be a convex closed curve with minimal radius of curvature $R > r$ and $\Sigma$ be an arbitrary minimizer for $M$.
Then the set $E_\Sigma$ of non-discrete energetic points of $\Sigma$ is a subset of $M_r$.
\label{en_sub}
\end{lemma}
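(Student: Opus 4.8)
The plan: the bound $\dist(x,M)\le r$ comes for free from property~(a), so I would only need to exclude $\dist(x,M)<r$. Suppose $x\in E_\Sigma$ with $\dist(x,M)=:d<r$ and aim at a contradiction with minimality. Since $\Sigma\subset N$ and $x\in G_\Sigma$, property~(a) gives $Q_x\in M$ with $|xQ_x|=r$, so $x\in N$; note that $\dist(x,M)=r$ would automatically put $x\in N_r$ and hence $x\in\partial N_r=M_r$, which is the claim. Let $P$ be the nearest point of $M$ to $x$; feeding the internal ball $\overline{B_R(O_P)}\subset N$ tangent to $M$ at $P$ (Lemma~\ref{circ_curv}) one checks $\overline{B_d(x)}\cap M=\{P\}$, so $P$ is unique, $[xP]\perp M$ at $P$, and $x\in\Int N$.

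Now I exploit that $x$ is non-isolated: pick distinct energetic $x_n\to x$, so $\dist(x_n,M)<r$ for $n$ large. By~(a) there are $Q_n:=Q_{x_n}\in M$ with $|x_nQ_n|=r$ and $B_r(Q_n)\cap\Sigma=\emptyset$, whence $|x_m-Q_n|\ge r$ for all $m,n$ (as $x_m\in\Sigma$). From $r\le|x-Q_n|\le|x-x_n|+|x_n-Q_n|\to r$ we get $|x-Q_n|\to r$, and after passing to a subsequence $Q_n\to A\in M$ with $|xA|=r$; passing to the limit in $B_r(Q_n)\cap\Sigma=\emptyset$ gives $B_r(A)\cap\Sigma=\emptyset$, so $A$ is a legitimate choice of $Q_x$. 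Because $d<r$, the point $A$ is \emph{not} the nearest point of $M$ to $x$, so $[xA]$ is not normal to $M$ at $A$ and $\partial B_r(x)$ crosses $M$ transversally at $A$ (the non‑generic situations — $A$ a critical point of $\dist(\cdot,x)|_M$, or $d=0$, or $M$ not strictly convex — are handled by the same idea). In particular a whole one‑sided sub‑arc of $M$ issuing from $A$ lies inside $B_r(x)$, i.e.\ is covered by the single point $x$ with a uniform positive slack; dually, expanding each $|x_m-Q_n|^2\ge r^2$ to first order shows that near $x$ the set $\Sigma$ lies on one side of the line through $x$ perpendicular to $x-A$ and is tangent there to $\overline{B_r(A)}$ — equivalently $\Sigma$ near $x$ is tangent at the $x_n$ to the circles $\partial B_r(Q_n)$, behaving like an envelope of radius‑$r$ circles centred on $M$ near $A$, whose only branch inside $N$ is $M_r$ itself.

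The contradiction then comes from a localized competitor. Fix small $\epsilon>0$ and let $x':=x+\epsilon\,(A-x)/r$, the point of the segment $[xA]\subset N$ at distance $\epsilon$ from $x$. Expanding $|x'-Q_n|^2=|x-Q_n|^2+2\epsilon\langle (A-x)/r,\;x-Q_n\rangle+\epsilon^2$ and using $Q_n\to A$, $|x-Q_n|\to r$, one finds $|x'-Q_n|<r$ for all large $n$: the \emph{single} point $x'$ already covers all but finitely many $Q_n$, each with positive slack. Hence $\Sigma$ is wasteful near $x$: choose a radius $\rho$ (valid for a.e.\ small $\rho$) for which $\Sigma\cap\partial B_\rho(x)$ is finite, observe that $\Sigma\cap\overline{B_\rho(x)}$ is then a small tree through $x$ (no loops, Lemma~\ref{cycle}) with finite "frame" on $\partial B_\rho(x)$, and replace it by a connected set that reconnects the frame, reaches $x'$ in place of $x$, and adds only an amount vanishing with $\rho$ to re‑cover the finitely many leftover $Q_n$; this keeps $F_M\le r$ while strictly decreasing $\H(\Sigma)$, contradicting minimality and forcing $\dist(x,M)=r$.

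The main obstacle is precisely this surgery: since $x$ is energetic there is no a~priori description of $\Sigma$ near $x$, so the inequality "the replacement is strictly shorter" must be extracted from no‑loops, finiteness of $\H(\Sigma)$ and Ahlfors regularity — used to bound the frame and to bound $\H(\Sigma\cap\overline{B_\rho(x)})$ from below by more than the replacement cost, crucially using that $d<r$, i.e.\ that the slack gained by moving from $x$ to $x'$ is of order $\epsilon$ rather than of higher order. The remaining steps (the limit construction of $A$, the transversality at $A$, and the first‑order expansions) are routine.
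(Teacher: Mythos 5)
Your preparatory steps are sound: the reduction to $\dist(x,M)<r$, the extraction of the limit witness $A$ with $|xA|=r$ and $B_r(A)\cap\Sigma=\emptyset$, and the first-order computation showing that the auxiliary point $x'=x+\epsilon(A-x)/r$ covers all but finitely many of the $Q_n$ with positive slack are all correct. But the proof stands or falls with the final surgery, and that step is not proved -- you say so yourself. Two concrete problems. First, no source of a \emph{strict} length saving is identified: the only general lower bound for the removed piece is $\H(\Sigma\cap\overline{B_\rho(x)})\geq c\rho$ (connectedness/Ahlfors regularity also give an upper bound $C\rho$), which is exactly the order of the unavoidable costs of your replacement -- reconnecting the possibly several components of $\Sigma\setminus B_\rho(x)$ that the deletion of the ball creates (by Lemma~\ref{cycle} the set is loop-free, so the deletion typically disconnects it) and reaching $x'$, which lies at distance about $\rho-\epsilon$ from the frame. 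Nothing in "no loops + finite length + Ahlfors regularity" forces the original piece to be longer than such a reconnection, so the inequality "$\H$ strictly decreases" does not follow from the tools you invoke; it is precisely the missing mathematical content. Second, the energy constraint is not secured either: the points of $M$ whose coverage may be lost when you delete $\Sigma\cap\overline{B_\rho(x)}$ are all $y\in M$ with $r-\rho<|y-x|\leq r+\rho$, i.e.\ a neighbourhood of the whole level set $\{y\in M:\ |y-x|=r\}$, which can contain points far from $A$ and not among the $Q_n$; your competitor only re-covers "the finitely many leftover $Q_n$", so even $F_M\leq r$ is not guaranteed. The intermediate "envelope/tangency" paragraph is heuristic and is never used in a precise way, so it cannot fill this hole.

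It is worth noting that the paper avoids any competitor construction and any length comparison at this point. It uses the non-isolatedness directly: taking energetic points $x_k\to x$ inside $B_{\varepsilon/2}(x)$, where $\dist(x,M)<r-\varepsilon$, and pairwise disjoint balls $B_{\varepsilon_k}(x_k)$, the arcs $\gamma_k\subset M$ covered by these balls all contain the foot point $z$ of $x$ on $M$; among three arcs through a common point one is contained in the union of the other two, and for the corresponding index the deletion of $B_{\varepsilon_i}(x_i)$ does not raise $F_M$, contradicting the fact that $x_i$ is \emph{energetic}. In other words, the contradiction is with the definition of an energetic point, not with minimality, which is exactly what lets the paper sidestep the quantitative estimate your route requires. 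As it stands, your proposal is an outline whose decisive inequality is asserted rather than proved, so it does not yet establish the lemma.
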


\begin{proof}
Suppose the contrary. Then there are such a point 
$x \in E_\Sigma \setminus M_r$ that $\dist(x, M) < r - \varepsilon$ for some positive $\varepsilon$ and a sequence $\{x_k\}$ of energetic points 
from $B_{\varepsilon/2}(x)$ converging to $x$.
Let us choose such a sequence of positive numbers $\{\varepsilon_k\}$ that $B_{\varepsilon_i}(x_i) \cap B_{\varepsilon_j}(x_j)=\emptyset$ for $i \neq j$.

    Because of convexity of $N$ and the fact that minimal radius of curvature of $M$ exceeds $r$, one has that each 
    $\gamma_k := \overline {B_{r+\varepsilon_k}(x_k)} \cap M$
    is connected, thus we can say that each $\overline{B_{\varepsilon_k}(x_k)}$ covers the arc $\gamma_k$. 
 	All $\gamma_j$ have a common point: in fact, for $z \in M$ such that $\dist (x,z) = \dist (x, M)$ one has
 \[
 \dist (x_j, z) \leq \dist (x,z) + \dist (x_j, x) <  r - \varepsilon + \varepsilon/2 = r - \varepsilon/2,
 \]
thus $z \in \gamma_j$ for all $j$.
Therefore $\gamma_i \subset (\gamma_j \cup \gamma_l)$ for some distinct $i,j,l$. 
So one of the points $x_i, x_j, x_l$ is not energetic because 
$F_M(\Sigma)=F_M(\Sigma \setminus B_{\varepsilon_i} (x_i))$ which is the desired contradiction.
\end{proof}

\begin{proof}[Proof of Lemma~\ref{global_lm}]

{\sc Proof of~(i)}: No change in the set $\Int (\Sigma \cap N_r)$ influences the value of $F_M(\Sigma)$, 
so  if we take the closure $S$ of any connected component of $\Sigma \cap \Int (N_r)$ and substitute it by a Steiner tree connecting $S\cap M_r$ (which must be nonempty if $\Sigma \cap \Int(N_r) \neq \emptyset$ because of connectedness of $\Sigma$ and the requirement $F_M(\Sigma)\leq r$ which gives $\Sigma\setminus \Int (N_r) \neq \emptyset$),
then the length of the resulting set should remain the same by optimality of $\Sigma$, and thus $S$ is itself a Steiner tree connecting $S\cap M_r$ as claimed. 

{\sc Proof of~(ii)}: Recall that $\Sigma=E_\Sigma \sqcup X_\Sigma \sqcup S_\Sigma$, where $X_\Sigma$ is a discrete set of points, $S_\Sigma$ consists of Steiner trees
(hence of line segments) and $E_\Sigma \subset M_r$ by Lemma~\ref{en_sub}.
	
{\sc Proof of~(iii)}:  
Remove an arbitrary open line segment $\Delta$ from the set $\Sigma \cap \Int (N_r)$. The value of $F_M$ 
does not change, i.e.\ $F_M(\Sigma \setminus \Delta) = F_M(\Sigma)$, and by Lemma~\ref{cycle} $\Sigma \setminus \Delta$ splits into two connected
components $\Sigma_1$ and $\Sigma_2$, so that $\Sigma \setminus \Delta = \Sigma_1 \sqcup \Sigma_2$ ($\Sigma$ is closed, so $\Sigma_1$, $\Sigma_2$ are closed too). 
Obviously $M \subset \overline{B_r(\Sigma_1)} \cup \overline{B_r(\Sigma_2)}$.
Then by connectedness of $M$ there is such a point $A \in M$ that 
$A \in \overline{B_r(\Sigma_1)} \cap \overline{B_r(\Sigma_2)}$, 
but then there are points $B \in \overline{\Sigma_1}$ 
and $C \in \overline{\Sigma_2}$ such that $|AB|\leq r$, 
$|AC|\leq r$. Hence the distance between $\Sigma_1$ and $\Sigma_2$ does not exceed $|BC|\leq 2r$ but the length of the deleted 
segment $\Delta$ does not exceed the distance between the $\Sigma_1$ and $\Sigma_2$ in view of optimality of $\Sigma$
(otherwise one could connect $\Sigma_1$ with $\Sigma_2$ with a shorter segment).
We let then $a_M(r)$ be the supremum of $|BC|$ over all the possible choices of $\Delta$, so that we have proven $a_M(r)\leq 2r$.

In the case $M = \partial B_R(O)$ the length of the segment $[BC]$ reaches its maximal value when $[BC]$ is a chord and $|AB| = |AC| = r$. 
Then we can calculate the maximal value of length of $[BC]$ in this case:
			\[
				\sin \frac{\angle AOC}{2} = \frac{|AC|}{2|OC|} = \frac{r}{2R},
			\]
			    so that
			\[
				|BC|=2 |OC| \sin \angle AOC = 4 |OC| \sin \frac{\angle AOC}{2} \cos \frac{\angle AOC}{2}  =  2r\sqrt{1- \frac{r^2}{4R^2}}.
			\]
\end{proof}

\begin{proof}[Proof of Lemma~\ref{hex}]
Assume the contrary i.e.\ that $\Sigma$ has a Steiner point $X \in \Int (N_r) \cup (S_\Sigma \cap M_r)$.
In view of Lemma~\ref{circ_curv} there is a point $O \in N$ such that $X \in B_R(O)$ and $B_R(O) \subset \Int(N)$ 
(hence $B_{R-r}(O) \subset \Int(N_r)$, and in particular, $O \in \Int (N_r)$). 
Recall that as defined in Lemma~\ref{global_lm} $\Sigma_r$ is the union of the closures of all connected components of $\Sigma \cap \Int (N_r)$.
Now denote by $X_0$ one of the Steiner points of $\Sigma_r \cup (S_\Sigma \cap M_r)$ nearest to $O$, and let $t \defeq |OX_0|$.
We claim that $X_0\in \Int(N_r)$. 
In fact, otherwise $X_0\in M_r$ and hence 
\[t = \dist(O,M_r) = \dist(O,M) - r \geq R - r > 3.98r,\] 
but $X_0$ is a Steiner point, hence, in view of the smoothness and convexity of $M_r$ there are two line segments $[X_0 Z_i]\subset \Sigma$, $i=1,2$ at angle $2\pi/3$ with respect to each other, intersecting $\Int(N_r)$. 
Suppose without loss of generality that $\angle OX_0 Z_1\leq \pi/3$. Then 
$Z_1\in B_t(O)\subset \Int(N_r)$, since otherwise there is an $Y\in [X_0Z_1]\cap \partial B_t(O)\subset\Sigma\cap \partial B_t(O)$ 
such that the line segment $[X_0Y]\subset \Sigma$ is a chord of $\partial B_t(O)$, which provides the estimate 
\[ 
|X_0Y|= 2t\cos \angle OX_0 Z_1\geq t > 3.98r 
\] 
contrary to Lemma~2.7(iii), this contradiction proving the claim.

	
Let $\Sigma'$ stand for the closure of the connected component of $\Sigma \cap \Int(N_r)$ containing $X_0$. 
By the structure of a Steiner tree sivce $X_0$ belongs to $\Int(N_r)$ then there are three maximal line segments of $\Sigma'$ starting from $X_0$.
Consider such a pair of them $[X_0X_{-1}], [X_0X_{1}]$ that the point $O$ belongs to the angle $\angle X_{-1}X_0X_{1}$ (not excluding the case it belongs to one of the sides of this angle). 
Recall that $\angle X_{-1}X_0X_{1} = 2\pi/3$.
Also note that points $X_{-1}$, $X_1$ lie outside of $B_t(O)$. 
Hence either $[X_0X_1]$ or $[X_0X_{-1}]$ intersects $B_t(O)$.
We assume without loss of generality that it is $[X_0X_1]$. 
Denote the intersection of the segment $[X_0X_1]$ and the circumference $\partial B_t (O)$ by $C$.

We claim that $t \leq a_M(r)$. Supposing the contrary, since $|X_0C| \leq a_M(r)$ and $|OX_0|= |OC| = t > a_M(r) \geq |X_0C|$, we have $\angle{OX_0C} > \pi/3$, hence the segment $[X_0X_{-1}]$ also intersects $B_t(O)$. 
Denote the intersection of the segment $[X_0X_{-1}]$ with $\partial B_t(O)$ by $D$ and note that also $\angle{OX_0D} > \pi/3$, and hence $\angle CX_0D > 2\pi/3$ which contradicts the local optimality of $\Sigma$, showing the claim. 

Note that $X_1, X_{-1}$ belong to $\Int (N_r)$ because $R - r > 2a_M(r) \geq t + a_M(r)$, and hence $X_1, X_{-1}$ are Steiner points.
Also by Lemma~\ref{global_lm} the lengths $[X_0X_{-1}]$ and $[X_0X_{1}]$ do not exceed $a_M (r)$. 
Consider a regular hexagon $P$ with sidelength $a_M(r)$ such that $X_0$ is a 
vertex of $P$ and the segments $[X_0X_1]$, $[X_0X_{-1}]$ belong to two sides of $P$.
The following assertions hold.

\begin{itemize}
\item $\diam P=2a_M (r)$.
\item The line segment $[OX_0]$ splits the angle $\angle X_{-1}X_0X_1 = 2 \pi/3$ 
in two angles, at least one of them is acute. 
Denote the latter angle by $\angle OX_0B$, where $B$ is the corresponding vertex of $P$ 
(so that $|X_0B| = a_M(r)$). Then the angle $\angle OBX_0$ is also acute 
because $|OX_0|=t \leq a_M(r) = |X_0B|$. Therefore the perpendicular from $O$ 
to the line $(X_0B)$ intersects the latter inside $[X_0B]$, so that 
$O$ is inside the square built on $[X_0B]$. But this square is a subset of $P$ hence $O\in P$.
\item The above assertions imply that $P \subset \overline{B_{2a_M(r)}(O)}$, and hence $P \subset \Int (N_r)$.  
\end{itemize}

Now let us pick such vertices $X_{-2}$ and $X_2$ that 
$[X_1X_2], [X_{-1}X_{-2}] \subset \Sigma_r$ and $O$ belongs to both angles $\angle X_0X_1X_2$ and $\angle X_0X_{-1}X_{-2}$.
Clearly $X_2, X_{-2} \in P \subset \Int (N_r)$ so they again are Steiner points. 
Let us define the points $X_3, X_{-3}$ in the same way: 
$[X_2X_3], [X_{-2}X_{-3}] \in \Sigma_r$ and $O$ belongs to the angles $\angle X_1X_2X_3$ and $\angle X_{-1}X_{-2}X_{-3}$.
Points $X_3, X_{-3}$ also belong to $P$, hence to $\Int (N_r)$, hence they also are Steiner points. The six constructed 
line segments belong to $\Int (N_r)$, so there is no endpoint there. 
Continuing inductively this construction, we arrive at two paths in $P \subset \Int (N_r)$: 
one path (starting from $X_0, X_1,X_2, X_3 \dots$) turns left every time and 
the other one  (starting from $X_0, X_{-1}, X_{-2}, X_{-3} \dots$) 
turns right every time.
Thus $\Sigma\cap P\subset \Sigma \cap \Int(N_r)$ contains a cycle or an endpoint of $\Sigma$ in $\Int (N_r)$, 
but both cases are impossible for a Steiner tree by Lemma~\ref{cycle} and Lemma~\ref{global_lm}.
\end{proof}

\begin{proof}[Proof of Lemma~\ref{compcon}]

Let $S \in V_C(G)$ be the closure of a connected component of $\Sigma \setminus N_r$.
First we prove that $n(S) \leq 2$. 
By property (a) of the set of energetic points for every energetic point $x \in S$ of $\Sigma$ 
there is such a point $Q_x \in M$ that $\dist (x, Q_x) = r$ and $B_r(Q_x) \cap \Sigma = \emptyset$. 
Then $Q_x$ can be only an end of the arc $q_S$, otherwise $S = S \setminus B_r(Q_x)$ is not connected.
If an end of $q_S$ corresponds to two different energetic points $W_1$, $W_2$ of $S$ then $q_{W_1} \subset q_{W_2}$ or $q_{W_2} \subset q_{W_1}$ which is impossible, and hence $n(S)\leq 2$ as claimed.

Now let us prove $m(S)\leq 2$. 
Assume the contrary i.e.\ the existence of at least three different entering points in $S$. Let us denote them  $I_1$, $I_2$ and $I_3$ such that $Q_{I_2} \in \breve{[Q_{I_1}Q_{I_3}]} \subset q_S$. Note that $I_2$ cannot be energetic, because $Q_{I_2}$ is not an end of $q_S$. So $I_2$ has such a neighbourhood $U$ that $U \cap \Sigma$ is a segment or a regular tripod; by Lemma~\ref{hex} it is a segment. 

We claim that $\Sigma$ contains a chord $[I_2J]$ of $M_r$.
It is true if $\Sigma$ is not tangent to $M_r$ at $I_2$.
Now, let $\Sigma$ be tangent to $M_r$ at $I_2$, so $I_2$ belongs to two closures of different connected components of $\Sigma \setminus N_r$; one of them is $S$; denote the second one by $S'$.
Let $P_1$ be the region bounded by the arc $\breve{[I_1I_2]}$ of $M_r$ (choosing in such a way that $P_1$ does not contain $N_r$) and the unique path between $I_1$ and $I_2$ in $S$.
Define $P_3$ analogously (with $I_3$ in place of $I_1$). Obviously, $S' \subset P_1$ or $S' \subset P_3$. Hence $q(S') \subset q(S)$ and replacing $S'$ in $\Sigma$ by a Steiner tree for $S' \cap M_r$ we get a connected competitor to $\Sigma$ still covering $M$. Also, any Steiner tree for $S' \cap M_r$ belongs to $N_r$ by the convexity of $M_r$, so this replacement decreases the length, which is impossible.
Hence, we get the claim, i.e.\ there is a chord $[I_2J] \subset \Sigma$ of $M_r$.

Then $|I_2J| \leq |I_1J|$ (otherwise we can replace $[I_2J]$ by $[I_1J]$ in $\Sigma$ producing the competitor of strictly lower length), and analogously $|I_2J| \leq |I_3J|$.
Note that $J \notin S$ because by Lemma~\ref{cycle} $\Sigma$ has no loops.
One can see that points $I_1, I_2, I_3, J$ belong to $M_r$ in the natural (clockwise) order
otherwise the arc $q_{S_J}$ is a subset of $q_S$, where $S_J$ is the closure of the connected component of $\Sigma \setminus N_r$ containing $J$, which is impossible.

Hence $|JI_2|$ is at least the diameter $d$ of the maximal ball inscribed in $N_r$ and touching $M_r$ at point $I_2$, i.e.\ the double \textit{inradius} of $M_r$. 
Since $d \geq 2(R-r)$, we have $|JI_2| \geq 2(R-r) > 2r$ contradicting Lemma~\ref{global_lm}(iii), showing the claim $m(S) \leq 2$.

Finally, note that $S$ should be locally minimal in a neighbourhood of any point $x \in S$ except energetic and entering points of $S$. 
We have proved that $n(S) \leq 2$, a non energetic point $x \in S$ has a neighbourhood $U_x$ such that $\Sigma \cap U_x$ is either a segment or a regular tripod.
If $x \in S$ is a non energetic endpoint of $S$ then $\Sigma \cap U_x \neq S \cap U_x$, so $x$ is an entering point.
So by definition of a locally minimal network, $S$ is a locally minimal network for its entering and energetic points.
\end{proof}

\begin{remark}
\label{corr}
During the proof of Lemma~\ref{compcon} (claim $n(S) \leq 2$) we show that if $x \in G_\Sigma \cap S$ then $Q_x$ can be only an end of the arc $q_S$. 
So in the case $n(S) = 2$ there is the unique one-to-one correspondence between energetic points of $S$ and endpoints of $q_S$.
\end{remark}

\begin{proof}[Proof of Lemma~\ref{dugi}]
The fact that $S$ has an energetic point immediately implies that $q_S$ does not belong to the union of $q_{S'}$ over $S' \in V(G) \setminus \{S\}$.
Suppose the contrary, i.e.\ that $S$ has no energetic point.

If $S$ is the closure of a  connected component of $\Sigma \setminus N_r$, then by Lemma~\ref{compcon} $S$ is a locally minimal network for its entering points, but $m(S) \leq 2$, hence $S$ is a segment with endpoints on $M_r$, which is impossible for a connected component of $\Sigma \setminus N_r$.

If $S$ is a non degenerate arc $\breve{[BC]}$, then $\breve{[BC]} \subset S_\Sigma$, which is impossible by the definition of $V_A(G)$.
\end{proof}

\begin{proof}[Proof of Lemma~\ref{accum}]
Suppose the contrary. 
Consider an arbitrary $\varepsilon > 0$ (which later will be chosen sufficiently small). 
First, note that Lemma~\ref{dugi} implies that every point of $M$ belongs to at most two different arcs $q_S$, where $S \in V(G)$ (otherwise, there are three arcs of $M$ containing a point $x \in M$, so one of them is contained in the union if others, which is impossible by Lemma~\ref{dugi}).   
Thus the sum of $\H(q_S)$ over $V(G)$ is at most $2\H(M)$, and therefore there is only a finite number of connected components and arcs with $\H(q_S) \geq \varepsilon$.
Denote by $V_\varepsilon(G)$ the infinite set of such $S \in V(G)$ that $\H(q_S) < \varepsilon$.

Obviously, if $V(G)$ is an infinite set, then $V_C(G)$ is an infinite set.
Let us show that there are infinitely many chords of $M_r$ in $\Sigma$ intersecting $\Int (N_r)$ (if $N$, and hence $N_r$, is strictly convex then in fact every chord of $M_r$ intersects $\Int (N_r)$).
Suppose the contrary. Then 
$\Sigma \setminus \Int (N_r)$ has a finite number of connected components; but $V_C(G)$ is infinite, hence there are components containing infinitely many elements of $V_C(G)$; 
let $K$ be one of these components containing at least five different elements of $V_C(G)$. Obviously, $q_K := \overline{B_r(K)} \cap \Sigma$ is connected. By Lemma~\ref{dugi} $K \setminus M_r$ contains $5$ energetic points, such that they belong to different elements of $V_C(G)$.
Call them $W_1$, $W_2$, $W_3$, $W_4$, $W_5$ such that $Q_{W_1}$, $Q_{W_2}$, $Q_{W_3}$, $Q_{W_4}$, $Q_{W_5} \in q_K$ belong to $M_r$ in the natural (clockwise) order. Then $B_r(Q_{W_i}) \cap \Sigma = \emptyset$, $i = 1, \ldots, 5$ and therefore $K$ should contain the points $I_2$, $I_3$, $I_4 \in M_r$ such that 
$$\dist (Q_{W_2}, I_2) = \dist (Q_{W_3}, I_3) = \dist (Q_{W_4}, I_4) = r$$
(because $K \setminus I_j$ must be disconnected, $j = 2, 3, 4$).
Consider the path between $I_2$ and $I_4$ in $K$. It should coincide with $\breve{[I_2I_4]} \subset M_r$, otherwise we reduce the length of $\Sigma$, projecting the path on $M_r$. So $W_3$ should belong to $M_r$ which is impossible by the choice of $W_i$, $i = 1, \ldots, 5$ and gives the desired contradiction. 
Thus the set $Ch$ of chords of $M_r$ in $\Sigma$ intersecting $\Int (N_r)$ is infinite.

There is at most a finite number of chords of length at least $\varepsilon$ because $\H(\Sigma)$ is finite.
Let us exclude from the infinite set $Ch$ a finite set of chords of length at least $\varepsilon$ and a finite set of chords adjacent to a component not in $V_\varepsilon(G)$;
denote the resulting set by $Ch'$: chords in $Ch'$ are adjacent only to the elements of $V_{\varepsilon}(G)$ and have length strictly less than $\varepsilon$.
Let us show that any of the chords in $Ch'$ connects components without Steiner points. 
Suppose the contrary. The following three cases have to be considered:

\begin{itemize}
\item[(i)] A chord in $Ch'$ is adjacent to a connected component $S \in V_\varepsilon(G)$ with $m(S) = 2$ containing a Steiner point.
Then the angle between the entering segments of the component is at most $2\pi/3$ (in fact, it must be between $\pi/3$ and $2\pi/3$).
Recall that $\H(q_S) < \varepsilon$, hence by the triangle inequality $S$ is a subset of an $\varepsilon$-neighbourhood of $M_r$
(otherwise $\dist (x,y) \leq r-\varepsilon$ for some $x \in S$, $y \in M$, so $B_\varepsilon (y) \cap M \subset q_S$ which contradicts $\H(q_S) < \varepsilon$).
So, when $\varepsilon$ is sufficiently small, recalling smoothness of $M_r$ one has that one of the entering segments has angle with $M_r$ at least $\pi/12$. 
It implies that the entering point $I$ of this segment is not energetic, so by Lemma~\ref{hex} its neighbourhood is a segment and it is an end of a chord $[IJ] \subset \Sigma$ of $M_r$.
So by the constraint on the radius of curvature of $M$ chord $[IJ]$ has length more than $\varepsilon$, which gives a contradiction with the assumption that our chord is in $Ch'$.

\item[(ii)] A chord in $Ch'$ is adjacent to a connected component $S \in V_\varepsilon(G)$ with $m(S) = 1$ containing a Steiner point.
Then it has the combinatorial type (b) on Fig.~\ref{lmn23}. Let us consider the triangle $\Delta QCI$, where $Q$ is an end of $q_S$, $C$ is the branching point of $S$, $I$ is the entering point of $S$. Since $\angle QCI = 2\pi/3$, we have $\angle QIC \leq \pi/3$, so the angle between the entering segment $[CI]$ and $M_r$ is at least $\pi/6$. Then again the chord $[IJ]$ has length more than $\varepsilon$, that contradicts the choice of the chord.

\item[(iii)] Finally, a chord in $Ch'$ is adjacent to an arc $S \in V_\varepsilon(G)$ containing a Steiner point $x$.
Then $x \in M_r$, and $x$ is an end of a chord of $M_r$ in $\Sigma$ which forms angle $\pi/3$ with $M_r$. 
Again by the condition on the radius of curvature of $M_r$ and with the choice of $\varepsilon$ sufficiently small, this chord has length more than $\varepsilon$ which is impossible.
\end{itemize}

Let us consider any chord $[I_1I_2] \in Ch'$, such that it connects some components from $V_\varepsilon (G)$  (which do not have Steiner points as proven). 
Note that the set $]I_2I_1) \cap \Sigma$ (resp. $]I_1I_2) \cap \Sigma$) contains an energetic point (it may coincide with $I_1$ ($I_2$); if $I_1$ ($I_2$) is not energetic, an energetic point on $]I_2I_1) \cap \Sigma$ (resp. $]I_1I_2) \cap \Sigma$) exists by Lemma~\ref{hex} and the absence of Steiner points in the considered connected components and arcs); denote the nearest to $I_1$ (resp. $I_2$) energetic point of 
$]I_2I_1) \cap \Sigma$ (resp. $]I_1I_2) \cap \Sigma$) by $W_1$ (resp. $W_2$).

Consider the region $P$ bounded by the segments $[W_1Q_{W_1}]$, $[W_2Q_{W_2}]$, $[W_1W_2]$ and the lesser arc $\breve{[Q_{W_1}Q_{W_2}]}$ of $M$.
Let us show that the intersection of $\Int (P)$ with $\Sigma$ is nonempty. 
There are two tangent lines to $M_r$ parallel to $[W_1W_2]$; let $l$ be the nearest line to $[W_1W_2]$.
Note that $[I_1I_2] \in Ch' \subset Ch$, so $[I_1I_2] \cap \Int(N_r) \neq \emptyset$ and $l \cap [W_1W_2] = \emptyset$.
Consider a point $w \in l \cap M_r$ and note that $Q_w$ is not covered by $\Sigma$, because $\dist (Q_W, \Sigma)  = \dist(Q_w, [W_1,W_2]) > \dist (Q_w,l) = r$.
We got a contradiction, so $\Int(P) \cap \Sigma \neq \emptyset$.


Let us pick a point $x \in \Int(P) \cap \Sigma$ and consider the path in $\Sigma$ connecting $x$ with the segment $[W_1W_2]$.
The existence of this path gives that for some $i \in \{1,2\}$ (say, without loss of generality, $i = 1$) one has 
$W_i = I_i$ (in fact, $]W_1W_2[ \subset S_\Sigma$, which means that this path connects $x$ with $W_1$ without touching $]W_1W_2]$, 
but a neighbourhood in $\Sigma$ of an energetic point of $\Sigma \setminus N_r$ is either a single line segment or two line segments with angle at least $2\pi/3$, see Fig.~\ref{lmn23} and~\ref{lmn4}, and thus $W_1 \in M_r$) and $B_\delta(I_1) \cap \Int (P) \cap \Sigma \neq \emptyset$ for sufficiently small $\delta > 0$.
Let $k$ be the tangent line to $M_r$ at $I_1 = W_1$. Since $|I_1I_2| \leq \varepsilon$, the angle between $k$ and $[I_1I_2]$ is $O(\varepsilon)$.
Consider an arbitrary point $y \in \partial B_\delta (I_1) \cap \Int (P) \cap \Sigma$. Since $B_r(Q_{I_1}) \cap \Sigma = \emptyset$ and $|yI_1| = \delta$
the angle between $k$ and $[yI_1]$ is $O(\delta)$. Let $z$ be a projection of $y$ on $[I_1I_2]$.
Then $\angle yI_1z = O(\varepsilon + \delta)$ is the smallest angle (for sufficiently small $\varepsilon$, $\delta$) in right-angled triangle $\Delta yI_1z$.
Hence one can replace $]I_1z[$ by $[zy]$ in $\Sigma$.
The new set is still connected, covers $M$ and has strictly lower length than $\Sigma$. 
We got in this way a contradiction with the optimality of $\Sigma$, concluding the proof.

\end{proof}

\begin{proof}[Proof of Lemma~\ref{bpMr}]
Note that in $\Sigma$ there are at most two chords of $M_r$ ending at $I$. It is true because of the properties of a locally minimal network: the angle between two segments ending at the same point is greater or equal to $2\pi/3$.

Let us show that $I \in S_\Sigma$. Assume the contrary: let $I \in G_\Sigma$. Then $B_r(Q_I) \cap \Sigma =\emptyset$. 
There are two possibilities:
\begin{enumerate}
    \item [(1)] $I \in S$, where $S \in V_C(G)$;
    \item [(2)] $I \in S$, where $S \in V_A(G)$ (as mentioned after Lemma~\ref{accum} $S$ is non degenerate i.e.\ does not reduce to a single point $I$).
\end{enumerate} 

Recall that $\Sigma$ consists of a finite number of segments and a finite number of arcs of $M_r$.
In the case (1) the smoothness of $M_r$, Lemma~\ref{compcon} and the fact $B_r(Q_I) \cap \Sigma =\emptyset$ imply that the intersection of a small neighbourhood of $I$ with $S \setminus N_r$ is a subset of the tangent line to $M_r$ at $I$. 

Thus the set $\Sigma \cap B_{\varepsilon}(I) \setminus \Int(N_r)$ is contained in the union of the tangent line $\tau$ to $M_r$ at $I$ and the arc $M_r\cap \partial B_\varepsilon(I)$. 
Both $\tau \cap B_\varepsilon (I)$ and $M_r \cap B_\varepsilon (I)$ are split by $I$ into $2$ segments $[IE'_1]$, $[IE'_2]$ and $2$ arcs $\breve{[IE_1]}$, $\breve{[IE_2]}$ of $M_r$, respectively, where $E_1, E_2, E_1', E_2' \in \partial B_\varepsilon(I)$. We may assume $E_1$ in the same halfplane with $E_1'$ bounded by the normal to $M_r$ passing throw $I$. 
At least one arc and one segment (say, $\breve{[IE_1]}$ and $[IE'_1]$) have angle at most $\pi/2$ with the chord $[IB]$.
The cases (i) and (ii) below deal with the situation with nonempty set $\Sigma \cap (\breve{[IE_1]} \cup [IE'_1])$.
In the remaining cases $\Sigma \cap B_{\varepsilon}(I) \setminus \Int(N_r)$ is a subset of $\breve{[IE'_2]} \cup [IE_2]$ and therefore in (iii)-(vi) we deal with all the possible cases of $B_\varepsilon (I) \cap \breve{[IE'_2]}$ and $B_\varepsilon(I) \cap [IE_2]$ empty/nonempty:

\begin{enumerate}
    \item[(i)] There is such a segment $[IE] \subset \Sigma$, that $(IE)$ is the tangent line to $M_r$, $|IE| = \varepsilon$ and $\angle BIE \leq \pi/2$; 
    \item[(ii)] There is such an $\varepsilon>0$ and an arc $\breve{[IE]} \subset \Sigma \cap M_r$ that $|IE|=\varepsilon$ and $\angle BIE \leq \pi/2$; 
    \item[(iii)] There is such a small $\varepsilon>0$ that $\overline{B_\varepsilon(I)} \cap \Sigma$  is equal to $[FI]\cup [IE]$ where $F,E \in \partial B_\varepsilon(I)$, $[FI]\subset [BI]$ and  $[IE]$ is a subset of the tangent line to $M_r$ at point $I$;
    \item[(iv)] There is such a small $\varepsilon>0$ that $\overline{B_\varepsilon(I)} \cap \Sigma$  is equal to $[FI]\cup \breve{[IE]}$, where $F,E \in \partial B_\varepsilon(I)$, $[FI]\subset [IB]$ and $\breve{[IE]} \subset M_r$;
    \item[(v)] There is such a small $\varepsilon>0$ that $\overline{B_\varepsilon(I)} \cap \Sigma$  contains $[FI]\cup [IC]\cup \breve{[ID]}$ where  $[IC]$ is a subset of the tangent line to $M_r$ at point $I$, $[FI] \subset [BI]$, $\breve{[ID]} \subset M_r$ and $\angle CID<\pi/6$;
    \item[(vi)] There is such an $\varepsilon>0$ that $\overline{B_\varepsilon(I)}$ is a subset of chord $[IB]$.
\end{enumerate}

We will show that all these cases are impossible. Let $\xi$ stand for the segment $[IE]$ in the cases (i) and (iii), and for $\breve{[IE]}$ in the cases (ii) and (iv).

{\sc Cases} (i), (ii): Let $F \defeq [BI] \cap B_{\varepsilon}(I)$ and $l^\varepsilon$ be the lesser arc of $\partial B_\varepsilon (I )$ limited by intersections with $\partial B_r(Q_I )$ and $M_r$. It is easy to see that $\H(l^\varepsilon)=O(\varepsilon^2)$ and $|FI|+ \H(\xi) - \H(\St(F, I , E)) = c\varepsilon + o(\varepsilon)$ with $c > 0$, where $\St(F, I , E)$ is a Steiner tree connecting points $F, I, E$. Then the length of $\Sigma' \defeq \Sigma \setminus ([F I ] \cup \xi) \cup l^\varepsilon \cup \St(F, I , E)$ is less than $\H(\Sigma)$ for sufficiently small $\varepsilon$. Moreover $\Sigma'$ is still connected and $F_M(\Sigma') \leq F_M(\Sigma)$. This gives us a contradiction with optimality of $\Sigma$.

  \begin{figure}
  \begin{center}
    \definecolor{ffqqqq}{rgb}{1.0,0.0,0.0}
    \definecolor{xdxdff}{rgb}{0.0,0.0,1.0}
    \definecolor{qqzzqq}{rgb}{0.0,0.6,0.0}
    \definecolor{qqqqff}{rgb}{0.0,0.0,1.0}
    \definecolor{uuuuuu}{rgb}{0.26666666666666666,0.26666666666666666,0.26666666666666666}
    \definecolor{qqzzqq}{rgb}{0.0,0.6,0.0}
    
    \hspace*{\fill}
    
    \begin{tikzpicture}[node distance=1mm][line cap=round,line join=round,>=triangle 45,x=0.5cm,y=0.5cm]
    \clip(-4.72, 1.7699999999999992) rectangle (9.3, 7.09999999999996);
    \draw [shift={(0.92,2.08)}]  plot[domain=1.382253704689843:3.4931410901876654,variable=\t]({1.0*5.0*cos(\t r)+-0.0*5.0*sin(\t r)},{0.0*5.0*cos(\t r)+1.0*5.0*sin(\t r)});

    \draw [shift={(0.92,2.08)},line width=2.pt,color=qqzzqq]  plot[domain = 2.002253704689843:2.50931410901876654,variable=\t]({1.0*5.0*cos(\t r)+-0.0*5.0*sin(\t r)},{0.0*5.0*cos(\t r)+1.0*5.0*sin(\t r)});
    
    \draw [shift={(0.92,2.08)},color=uuuuuu]  plot[domain=1.352253704689843:3.4931410901876654,variable=\t]({1.0*4.0*cos(\t r)+-0.0*4.0*sin(\t r)},{0.0*4.0*cos(\t r)+1.0*4.0*sin(\t r)});
    
    \draw [shift={(0.92,2.08)},line width=1.2pt,color=ffqqqq]  plot[domain=2.0122253704689843:2.4931410901876654,variable=\t]({1.0*4.0*cos(\t r)+-0.0*4.0*sin(\t r)},{0.0*4.0*cos(\t r)+1.0*4.0*sin(\t r)});
    \draw [line width=1.2pt,color=ffqqqq] (-0.7889278373897851,5.696568214010939)-- (1.84,4.56);
    \draw [dotted,domain=-4.0:0.92] plot(\x,{(-8.853752895565991--2.415811981703012*\x)/-3.1880797463457786});
    \draw [dotted,domain=-4.72:0.92] plot(\x,{(-6.8818126586608175--3.6165682140109388*\x)/-1.7089278373897852});

    \begin{scriptsize}
    \draw [fill=uuuuuu] (0.92,2.08) circle (1.5pt);
    \draw[color=uuuuuu] (1.060000000000002,2.3599999999999985) node {$O$};
    \draw [fill=ffqqqq] (-0.7889278373897851,5.696568214010939) circle (1.5pt);
    \draw[color=ffqqqq] (-0.6399999999999985,5.979999999999997) node {$I$};
    \draw [fill=ffqqqq] (-2.2680797463457787,4.495811981703012) circle (1.5pt);
    \draw[color=ffqqqq] (-2.219999999999999,4.779999999999998) node {$C$};
    \draw [fill=ffqqqq] (1.84,4.56) circle (1.5pt);
    \draw[color=ffqqqq] (1.9800000000000024,4.839999999999998) node {$B$};
    \draw [fill=qqzzqq] (-1.2161597967372333,6.600710267513673) circle (1.5pt);
    \draw[color=qqzzqq] (-1.0799999999999987,6.879999999999997) node {$Q_I$};
    \draw [fill=qqzzqq] (-3.0591597967372333,5.111710267513673) circle (1.5pt);
    \draw[color=qqzzqq] (-3.0799999999999987,5.459999999999997) node {$Q_C$};

    \end{scriptsize}

    \draw (7.92,2.08) circle (5.0cm);
    \draw (7.92,2.08) circle (4.0cm);
    \draw [shift={(7.92,2.08)},line width=1.2000000000000002pt,color=ffqqqq]  plot[domain=2.0122253704689843:2.4931410901876654,variable=\t]({1.0*4.0*cos(\t r)+-0.0*4.0*sin(\t r)},{0.0*4.0*cos(\t r)+1.0*4.0*sin(\t r)});
    \draw [shift={(7.92,2.08)},line width=2.pt,color=qqzzqq]  plot[domain=2.0122253704689843:2.4931410901876654,variable=\t]({1.0*5.0*cos(\t r)+-0.0*5.0*sin(\t r)},{0.0*5.0*cos(\t r)+1.0*5.0*sin(\t r)});
    
    \draw [line width=1.20pt,color=ffqqqq] (6.2289278373897867,5.696568214010938)-- (8.84,4.56);
    
    \draw [dotted,domain=-4.0:0.92] plot(\x + 7.0,{(-8.853752895565991--2.415811981703012*\x)/-3.1880797463457786});
    \draw [dotted,domain=-4.72:0.92] plot(\x + 7.0,{(-6.8818126586608175--3.6165682140109388*\x)/-1.7089278373897852});
    
    \draw [line width=2.0pt,color=qqqqff] (5.503176570769273,5.277615106560769)-- (7.00621278219520972,5.362499184906717);
    \draw [dash pattern=on 3pt off 3pt] (5.7961597967372333,6.600710267513673) circle (1.0000000000000002cm);
    \draw [line width=2.0pt,color=qqqqff] (5.503176570769273,5.277615106560769)-- (5.593837188194806,5.620472696385129);
    \draw [line width=0.4pt] (5.7961597967372333,6.600710267513673)-- (5.5939837188194806,5.620472696385129);
    \draw [shift={(7.92,2.08)},line width=2.0pt,color=qqqqff]  plot[domain=2.215290483921135:2.4931410901876654,variable=\t]({1.0*4.0*cos(\t r)+-0.0*4.0*sin(\t r)},{0.0*4.0*cos(\t r)+1.0*4.0*sin(\t r)});
    \draw [line width=2.0pt,color=qqqqff] (8.84,4.56)-- (6.98621278219520972,5.362499184906717);
    
    \begin{scriptsize}
    
    \draw[color=ffqqqq] (-4.0,6.9) node {$(a)$};
    \draw[color=ffqqqq] (3.0,6.9) node {$(b)$};

    \draw [fill=uuuuuu] (7.92,2.08) circle (1.5pt);
    \draw[color=uuuuuu] (8.060000000000002,2.3599999999999985) node {$O$};
    \draw [fill=ffqqqq] (6.2289278373897867,5.696568214010938) circle (1.5pt);
    \draw[color=ffqqqq] (6.2799999999999985,5.979999999999997) node {$I$};
    \draw [fill=ffqqqq] (4.7480797463457787,4.495811981703012) circle (1.5pt);
    \draw[color=ffqqqq] (4.649999999999999,4.819999999999998) node {$C$};
    \draw [fill=ffqqqq] (8.84,4.56) circle (1.5pt);
    \draw[color=ffqqqq] (8.9800000000000024,4.839999999999998) node {$B$};
    
    \draw [fill=xdxdff] (5.523176570769273,5.277615106560769) circle (1.5pt);
    \draw[color=xdxdff] (5.3099999999999988,5.399999999999998) node {$E$};
    \draw [fill=xdxdff] (6.98621278219520972,5.362499184906717) circle (1.5pt);
    \draw[color=xdxdff] (7.12000000000000174,5.639999999999997) node {$F$};
    \draw [fill=qqzzqq] (5.8061597967372333,6.600710267513673) circle (1.5pt);
    \draw[color=qqzzqq] (5.9399999999999987,6.879999999999997) node {$Q_I$};
    \draw [fill=xdxdff] (5.5939837188194806,5.620472696385129) circle (1.5pt);
    \draw[color=xdxdff] (5.7599999999999987,5.899999999999998) node {$H$};
    \draw [fill=qqzzqq] (3.9591597967372333,5.111710267513673) circle (1.5pt);
    \draw[color=qqzzqq] (3.87099999999999987,5.459999999999997) node {$Q_C$};

    \end{scriptsize}
    \end{tikzpicture}\hspace*{\fill}
    \caption{The case (iv) from Lemma~\ref{bpMr}: (a) the (impossible) part of the minimizer;
(b) the better competitor.}

    \label{inner2}
    \end{center}
    \end{figure}
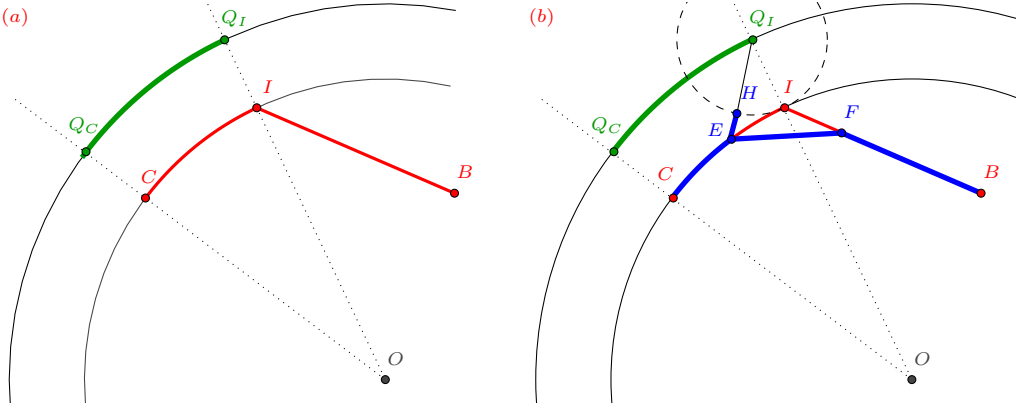

  

{\sc Cases} (iii), (iv): Note that $|FI|=|IE|=\varepsilon$ (see Fig.~\ref{inner2}(a)), so $\H(\xi)=\varepsilon+o(\varepsilon)$ when $\varepsilon \to 0^+$, 
because $M_r$ is smooth. Let $H$ be the point of intersection of $[EQ_I]$ and $\partial B_r(Q_I)$  (see Fig.~\ref{inner2}(b)). 
Note that $(IQ_I)$ is perpendicular to the tangent line to $M_r$ at the point $I$. 
Thus 
    \begin{align*}
    |EH|&=|EQ_I |-|Q_I H|=\sqrt{|EI |^2+r^2}-r=\sqrt{\varepsilon^2+r^2}-r\\
    &=r\sqrt{1+o(\varepsilon)}-r=o(\varepsilon).
    \end{align*}
Now, since the angle between $\xi$ and 
the segment $[FI]$ is less than $\pi$, we get
    \[
    |EF|=\sqrt{2\varepsilon^2-2\varepsilon^2\cos\angle EI F}= \sqrt 2 \varepsilon \sqrt {1-\cos\angle EI F}< 2 \varepsilon - c \varepsilon,\ \  \mbox{for some}  \ \ c > 0
    \]
and therefore
    \[
    |EH|+|EF| < \H(\xi)+|IF| = 2\varepsilon+o(\varepsilon)
    \]
for sufficiently small $\varepsilon>0$. So we have a contradiction with the optimality of $\Sigma$, because we show that $(\Sigma \setminus B_\varepsilon (I)) \cup [EH] \cup [EF]$ is the better competitor.

{\sc Case} (v): Let $H \in [IC)$ be such a point that $(DH) \perp (IC)$. Then the set 
\[
\Sigma'=\Sigma \setminus \breve{]ID]} \cup [HD] 
\]
is still connected, has energy $F_M$ not greater than $\Sigma$ and strictly smaller length, since $|HD|<|ID|/2 \leq \H(\breve{[ID]})/2$. It means $\Sigma'$ is the better competitor than $\Sigma$, again a contradiction.

{\sc Case} (vi): In this case $S \in V_A(G)$ and $S = \{I\}$, which is impossible.


So all cases are impossible and we have a contradiction which implies $I \in S_\Sigma$. Because of Lemma~\ref{hex} $I$ can not be a Steiner point. Then there exists an $\varepsilon > 0$ such that $S_\Sigma \cap B_{\varepsilon}(I)$ is a segment. 
\end{proof}

\begin{proof}[Proof of Lemma~\ref{27}] 
Let $\breve{BC}$ be as in the statement being proven.

Suppose that there is a segment $[IJ] \subset \Sigma$ such that $I = \breve{]BC[}\cap [IJ]$. We claim that  $B_\varepsilon(I) \cap \Sigma \subset \breve{[BC]}$. In fact, by Lemma~\ref{bpMr} $[IJ]$ cannot be a part of a chord of $M_r$, so $]IJ] \subset \Sigma \setminus \Int (N_r)$. Note that in this case $I$ is energetic (because $B_\varepsilon(I)$ is not a segment or a tripod for every $\varepsilon > 0$). 
Hence $B_r(Q_I) \cup \Sigma = \emptyset$, so $[IJ]$ is a part of the tangent line to $M_r$ at $I$.
Let us choose an $\varepsilon > 0$ and set $\{D_{1},D_2\} := \breve{[BC]} \cap \partial B_\varepsilon(I)$, $E := [IJ] \cap \partial B_\varepsilon(I)$.
If $\varepsilon > 0$ is sufficiently small one of the angles $\angle D_1IE$, $\angle D_2IE$ is less than $\pi/6$ (say $\angle D_1IE$).
Let $H \in [IJ)$ be such a point that $(D_1H) \perp (IJ)$. Then the set 
\[
\Sigma' := \Sigma \setminus \breve{]ID_1]} \cup [HD_1] 
\]
is still connected, has energy $F_M$ not greater than $F_M(\Sigma)$ and strictly smaller length, since $|HD_1|<|ID_1|/2 \leq \H(\breve{[ID_1]})/2$. It means that $\Sigma'$ is better competitor than $\Sigma$. We got a contradiction, showing thus $B_\varepsilon(I) \cap \Sigma \subset \breve{[BC]}$ for $I \in \breve{]BC[}$.

Let us prove now that $B_\varepsilon (B) \setminus \breve{[BC]}$ is a subset of the tangent line to $M_r$ at $B$ (the analogous statement for the point $C$ is completely symmetric). 
By Lemma~\ref{bpMr} there is no chord of $M_r$ in $\Sigma$ with endpoint $B$. So the set $B_\varepsilon (B) \setminus \breve{[BC]}$ is a subset of $\Sigma \setminus N_r$.

\begin{figure}
\begin{center}

\definecolor{qqttcc}{rgb}{0.,0.2,0.8}
\definecolor{uuuuuu}{rgb}{0.26666666666666666,0.26666666666666666,0.26666666666666666}
\definecolor{wwzzqq}{rgb}{0.0,0.6,0.0}
\definecolor{xdxdff}{rgb}{0.,0.2,0.8}
\definecolor{ffqqqq}{rgb}{1.,0.,0.}
\begin{tikzpicture}[line cap=round,line join=round,>=triangle 45,x=1.0cm,y=1.0cm]
\clip(-14.18,1.24) rectangle (-0.1,6.98);
\draw(-1.26,1.94) circle (4.cm);
\draw(-1.26,1.94) circle (3.cm);
\draw [shift={(-1.26,1.94)},line width=2pt,color=ffqqqq]  plot[domain=1.9756881130799804:2.647651284670212,variable=\t]({1.*3.*cos(\t r)+0.*3.*sin(\t r)},{0.*3.*cos(\t r)+1.*3.*sin(\t r)});
\draw [dotted,domain=-14.179999999999994:-1.26] plot(\x,{(--1.1817578957375012--2.7574350900541735*\x)/-1.1817578957375037});
\draw [dash pattern=on 3pt off 3pt] (-2.835677194316671,5.616580120072231) circle (1.cm);
\draw (-2.835677194316671,5.616580120072231)-- (-3.2106825881547616,4.21921860300188);
\draw [line width=2.pt,color=qqttcc] (-3.2106825881547616,4.21921860300188)-- (-3.0948725082227573,4.650755193261414);
\draw [shift={(-1.26,1.94)},line width=2.pt,color=qqttcc]  plot[domain=2.278680208561553:2.647651284670212,variable=\t]({1.*3.*cos(\t r)+0.*3.*sin(\t r)},{0.*3.*cos(\t r)+1.*3.*sin(\t r)});
\draw [shift={(-1.26,1.94)},line width=2pt,color=wwzzqq]  plot[domain=1.9756881130799802:2.647651284670212,variable=\t]({1.*4.*cos(\t r)+0.*4.*sin(\t r)},{0.*4.*cos(\t r)+1.*4.*sin(\t r)});
\draw [dotted,domain=-14.179999999999994:-9.0] plot(\x,{(--9.277405779246862--1.4986199169028378*\x)/-2.5988725141226396});
\draw [dotted,domain=-14.179999999999994:-9.0] plot(\x,{(--24.602993658787383--2.8830012801984877*\x)/-0.8296406561722272});
\draw [shift={(-9.,1.62)},line width=2pt,color=ffqqqq]  plot[domain=1.8509953187487473:2.6185249908185098,variable=\t]({1.*3.*cos(\t r)+0.*3.*sin(\t r)},{0.*3.*cos(\t r)+1.*3.*sin(\t r)});
\draw [shift={(-9.,1.62)},line width=2pt,color=wwzzqq]  plot[domain=1.8509953187487476:2.6185249908185098,variable=\t]({1.*4.*cos(\t r)+0.*4.*sin(\t r)},{0.*4.*cos(\t r)+1.*4.*sin(\t r)});
\draw [shift={(-9.,1.62)}] plot[domain=1.5443124541971658:3.2529718962988716,variable=\t]({1.*3.*cos(\t r)+0.*3.*sin(\t r)},{0.*3.*cos(\t r)+1.*3.*sin(\t r)});
\draw [shift={(-9.,1.62)}] plot[domain=1.544312454197166:3.2529718962988716,variable=\t]({1.*4.*cos(\t r)+0.*4.*sin(\t r)},{0.*4.*cos(\t r)+1.*4.*sin(\t r)});
\draw [dotted] (-1.26,1.94)-- (-8.005688073394497,5.572293577981651);
\draw [shift={(-9.,1.62)},line width=2pt,color=wwzzqq]  plot[domain=1.8509953187487476:2.6185249908185098,variable=\t]({1.*4.*cos(\t r)+0.*4.*sin(\t r)},{0.*4.*cos(\t r)+1.*4.*sin(\t r)});
\begin{scriptsize}
\draw [fill=black] (-1.26,1.94) circle (1.5pt);
\draw[color=black] (-1.08,2.28) node {$O$};
\draw [fill=ffqqqq] (-2.4417578957375037,4.6974350900541735) circle (1.5pt);
\draw[color=ffqqqq] (-2.3,4.98) node {$B$};
\draw [fill=xdxdff] (-3.901413299766526,3.3622994691050523) circle (1.5pt);
\draw[color=xdxdff] (-3.95,3.64) node {$C$};
\draw [fill=xdxdff] (-3.2106825881547616,4.21921860300188) circle (1.5pt);
\draw[color=xdxdff] (-3.,4.16) node {$B_1$};
\draw [fill=wwzzqq] (-2.835677194316671,5.616580120072231) circle (1.5pt);
\draw[color=wwzzqq] (-2.66,5.96) node {$Q_B$};
\draw [fill=xdxdff] (-3.0948725082227573,4.650755193261414) circle (1.5pt);
\draw[color=xdxdff] (-2.91,4.94) node {$I$};
\draw [fill=wwzzqq] (-4.781884399688702,3.8363992921400696) circle (1.5pt);
\draw[color=wwzzqq] (-4.8,4.28) node {$Q_C$};
\draw [fill=black] (-9.,1.62) circle (1.5pt);
\draw[color=black] (-8.86,1.9) node {$O$};
\draw [fill=ffqqqq] (-11.59887251412264,3.118619916902838) circle (1.5pt);
\draw[color=ffqqqq] (-11.74,3.5) node {$C$};
\draw [fill=ffqqqq] (-9.829640656172227,4.503001280198488) circle (1.5pt);
\draw[color=ffqqqq] (-9.68,4.78) node {$B$};
\draw [fill=wwzzqq] (-12.465163352163524,3.6181598892037865) circle (1.5pt);
\draw[color=wwzzqq] (-12.54,3.92) node {$Q_C$};
\draw [fill=wwzzqq] (-10.10618754156297,5.4640017069313185) circle (1.5pt);
\draw[color=wwzzqq] (-9.96,5.74) node {$Q_B$};
\end{scriptsize}
\end{tikzpicture}

\caption{Picture to Lemma~\ref{27}. An end of an arc of $M_r \cap \Sigma$ cannot be an endpoint of $\Sigma$.}
\label{L210}

\end{center}
\end{figure}
We claim first that $B$ is not an endpoint of $\Sigma$ i.e.\ $B_\varepsilon (B) \setminus \breve{[BC]} \neq \emptyset$.
Assume the contrary and recall that $Q_B, Q_C \in M$ are such points that $\dist (B, Q_B) = \dist (C, Q_C) = r$. 
Then one can set $B_1 := \partial B_\varepsilon (B) \cap \breve{[BC]}$ and replace $\breve{[B_1 B]}$ by the segment $[B_1 I] \defeq [B_1 Q_B] \setminus B_r(Q_B)$, producing the competitor of strictly lower length because $\breve{[BC]} \setminus \breve{[B_1 B]} \cup [B_1 I] = \breve{[B_1C]} \cup [B_1 I]$ still covers the arc $\breve{[Q_B Q_C]}$ of $M$ (when $\varepsilon$ is sufficiently small), see Fig.~\ref{L210}.

Therefore we have proven that for sufficiently small $\varepsilon > 0$ the set $B_\varepsilon (B) \setminus \breve{[BC]}$ is a nonempty subset of $\Sigma \setminus N_r$.
If $B$ is energetic then $B_r(Q_B) \cap \Sigma = \emptyset$, hence $B_\varepsilon (B) \setminus \breve{[BC]}$ is a subset of the tangent line to $M_r$ at point $B$ showing the claim.
So $B \in S_\Sigma$, hence $B_\varepsilon (B)$ is a segment or a tripod for sufficiently small $\varepsilon > 0$. But the case of a tripod is impossible by Lemma~\ref{hex},
while the case of a segment is only possible recalling smoothness of $M_r$ (and part of $M_r$ in a neighbourhood of $B$ is in fact flat).

Summing up, the only segments intersecting $\breve{[BC]}$ are segments tangent to $M_r$ at points $B$ and $C$.
As a consequence of Lemma~\ref{accum}  $\Sigma$ consists of a finitely many segments and maximal arcs of $M_r$, so when $\varepsilon$ is small, $B_\varepsilon (\breve{[BC]})$
contains only $2$ segments which is proven to be tangent to $M_r$ at points $B$ and $C$, respectively. 
The statement is proven. 
\end{proof}

\begin{proof}[Proof of Lemma~\ref{debil}]
Consider a point $C \in M_r \cap \Sigma$. By Lemma~\ref{27} if $C$ belongs to some non degenerate arc of $\Sigma \cap M_r$ with an energetic point in its interior (i.e.\ an element of $V_A(G)$) the statement is true.
Note that if there is a chord $[IC] \subset \Sigma$ of $M_r$ then Lemma~\ref{bpMr} implies the claim. 
Thus $B_\varepsilon (C) \cap \Int(N_r) = \emptyset$.
If $C \in S_\Sigma$ then by Lemma~\ref{hex} its neighbourhood cannot be a tripod, so it is a segment and the statement of Lemma is obvious.
It remains to consider the case when $B_\varepsilon (C) \cap \Int(N_r) = \emptyset$ and $C$ is energetic, which implies $B_r(Q_C) \cap \Sigma = \emptyset$ so 
the set $B_\varepsilon (C)\cap \Sigma$ is just a segment (because $\Sigma$ consists of a finite number of arcs of $M_r$ and segments by Lemma~\ref{accum}) 
which must be a subset of the tangent line to $M_r$ at $C$, the claim follows. 
\end{proof}

\begin{figure}
\begin{center}
\definecolor{qqzzqq}{rgb}{0.,0.6,0.}
\definecolor{qqwuqq}{rgb}{0.,0.39215686274509803,0.}
\definecolor{ffqqqq}{rgb}{1.,0.,0.}
\definecolor{uuuuuu}{rgb}{0.26666666666666666,0.26666666666666666,0.26666666666666666}
\begin{tikzpicture}[line cap=round,line join=round,>=triangle 45,x=2.0cm,y=2.0cm]
\clip(-4.806773630713251,2.23198626522033975) rectangle (2.2105066097074855,5.227743261451338);
\draw[color=qqwuqq,fill=qqwuqq,fill opacity=0.1] (-0.7828834073650064,3.4051645353014964) -- (-1.0077261207531172,3.1946321540262237) -- (-0.7921937394778444,2.959789440638113) -- (-0.5573510260897336,3.1753218219133856) -- cycle; 
\draw[color=qqwuqq,fill=qqwuqq,fill opacity=0.1] (-1.514457129560125,2.594896844456767) -- (-1.5239369096190678,2.9274660008583168) -- (-1.8405060660206177,2.899986220799374) -- (-1.832026285961675,2.5674170643978242) -- cycle; 
\draw(0.,0.) circle (6.cm);
\draw(0.,0.) circle (10.cm);
\draw [line width=1.6pt,color=ffqqqq] (-1.5039208365385677,2.5958085671761575)-- (-2.38,2.52);
\draw [line width=1.6pt,color=ffqqqq] (-0.8983931722680689,2.8623224325750085)-- (-0.16,3.54);
\draw [dash pattern=on 4pt off 4pt] (-1.8160410203327413,4.658540008679631)-- (-1.832026285961675,2.5674170643978242);
\draw [dash pattern=on 2pt off 2pt] (-2.38,2.52)-- (-4.377316181929332,2.4164236059556314);
\draw [dash pattern=on 2pt off 2pt] (-0.16,3.54)-- (1.3910844552857882,4.802591387809527);
\draw [dash pattern=on 4pt off 4pt] (-2.00622010237341,4.579855991276672)-- (-0.5573510260897336,3.1753218219133856);
\draw [shift={(0.,0.)},line width=1.6pt,color=qqzzqq]  plot[domain=1.2888591209561593:2.637189829252206,variable=\t]({1.*5.*cos(\t r)+0.*5.*sin(\t r)},{0.*5.*cos(\t r)+1.*5.*sin(\t r)});
\draw [dotted,domain=-1.5039208365385677:2.2105066097074855] plot(\x,{(--0.9484688340602507--0.030107927696170655*\x)/0.3479412564060971});
\draw [dotted,domain=-4.806773630713251:-0.8983931722680689] plot(\x,{(-0.949680833872411-0.23640593770268037*\x)/-0.2575864078644017});
\begin{scriptsize}
\draw [fill=uuuuuu] (0.,0.) circle (1.5pt);
\draw[color=uuuuuu] (0.10682516718306762,0.21647353943781225) node {$A$};
\draw[color=qqwuqq] (-2.862671451900328,4.278231616071924) node {$q_{S_l}$};
\draw [fill=ffqqqq] (-1.5039208365385677,2.5958085671761575) circle (1.5pt);
\draw [fill=ffqqqq] (-2.38,2.52) circle (1.5pt);
\draw[color=ffqqqq] (-2.367329603665918,2.7208912704671144) node {$V_l$};
\draw[color=ffqqqq] (-1.9217247952511924,2.7362331187015247) node {$W_l$};
\draw [fill=ffqqqq] (-0.8983931722680689,2.8623224325750085) circle (1.5pt);
\draw[color=ffqqqq] (-0.7947525938988257,2.7964960788818405) node {$C_r$};
\draw [fill=ffqqqq] (-0.16,3.54) circle (1.5pt);
\draw[color=ffqqqq] (-0.05846408901527949,3.7626793996932606) node {$V_r$};
\draw [fill=qqwuqq] (-4.377316181929332,2.4164236059556314) circle (1.5pt);
\draw[color=qqwuqq] (-4.365826974064115,2.6357071983408935) node {$Q_l^{S_l}$};
\draw [fill=qqwuqq] (1.3910844552857882,4.802591387809527) circle (1.5pt);
\draw[color=qqwuqq] (1.4892444008419707,5.024888265207912) node {$Q_r^{S_r}$};
\draw [fill=qqwuqq] (-1.8160410203327413,4.658540008679631) circle (1.5pt);
\draw[color=qqwuqq] (-1.7113566509987506,4.874625305027596) node {$Q_r^{S_l}$};
\draw [fill=ffqqqq] (-1.832026285961675,2.5674170643978242) circle (1.5pt);
\draw[color=ffqqqq] (-1.6563829470167822,2.7559701585212093) node {$C_l$};
\draw[color=black] (-2.026908867377413,3.7476531036752294) node {$r$};
\draw [fill=qqwuqq] (-2.00622010237341,4.579855991276672) circle (1.5pt);
\draw[color=qqwuqq] (-1.9566984992331608,4.799493824937438) node {$Q_l^{S_r}$};
\draw [fill=uuuuuu] (-1.1559795801324706,2.625916494872328) circle (1.5pt);
\draw[color=uuuuuu] (-1.0501996262053621,2.6460753425933354) node {$O$};
\draw[color=black] (-3.34922291696419,2.6460753425933354) node {$r$};
\draw[color=black] (0.7980347840125191,4.123310504126018) node {$r$};
\draw [fill=ffqqqq] (-0.5573510260897336,3.1753218219133856) circle (1.5pt);
\draw[color=ffqqqq] (-0.48914778548409995,3.3870219992424717) node {$W_r$};
\draw[color=black] (-1.4108307306381194,3.8378108797834187) node {$r$};
\draw[color=qqzzqq] (-1.0864881310889082,4.944572712991534) node {$q_{S_r}$};
\end{scriptsize}
\end{tikzpicture}
\caption{Picture to Lemma~\ref{disj}}
\label{ha}
\end{center}

\end{figure}
\begin{proof}[Proof of Lemma~\ref{disj}]
Recall that $m(S_l) = m(S_r) = 1$. 
Denote the ends of $q_{S_l}$ and $q_{S_r}$ in the following way: $q_{S_l} = \breve{[Q_l^{S_l}Q_r^{S_l}]}$, $q_{S_r} = \breve{[Q_l^{S_r}Q_r^{S_r}]}$.
Suppose the contrary, i.e.\ that $Q_r^{S_l} \in \breve{]Q_l^{S_r}Q_r^{S_r}[}$, $Q_l^{S_r} \in \breve{]Q_l^{S_l}Q_r^{S_l}[}$.
Suppose that $n(S_l) = 2$ or $n(S_r) = 2$ (let $n(S_l) = 2$, the case $n(S_r) = 2$ is completely analogous). Then by Remark~\ref{corr} there is an energetic point of $S_l$ corresponding to the point $Q_r^{S_l}$. 
But $B_r (Q_r^{S_l}) \cap \Sigma \neq \emptyset$, because $Q_r^{S_l} \in \breve{]Q_l^{S_r}Q_r^{S_r}[} = q_{S_r}$.
So we have a contradiction with the assumption $n(S_l) = 2$, and hence $S_l$ coincides with the segment $[C_lV_l]$. 
Clearly, $V_l$, $C_l$ and $Q_l^{S_l}$ lie on the same line (otherwise one can replace $[V_lV']$ by the part of the segment $[V'Q_l^{S_l}]$, where $V' := \partial B_\varepsilon (V_l) \cap [V_lC_l]$ producing a competitor of strictly lower length). Hence $[C_lV_l]$ is tangent to $B_r(Q^{S_l}_r)$ (see Fig.~\ref{ha}).

Let $W_l$ be such a point of $[C_lV_l]$ that $\dist (W_l, Q_r^{S_l}) = r$, $W_r$ be such a point of $[C_rV_r]$ that $\dist (W_r, Q_l^{S_r}) = r$.
Note that the points $C_l$, $V_l$, $Q_l^{S_l}$ lie on the same line, so $\dist (W_lQ_l^{S_l}) \geq r = \dist (W_l, Q_r^{S_l})$, so $\angle Q_r^{S_l}Q_l^{S_l}W_l \leq \angle Q_l^{S_l}Q_r^{S_l}W_l$.
The segment $[C_lV_l]$ is tangent to  $B_r(Q^{S_l}_r)$, hence $(Q_r^{S_l}W_l) \perp (V_lC_l)$. Calculating angles in triangle $\Delta Q_r^{S_l}Q_l^{S_l}W_l$ we have $\angle Q_r^{S_l}Q_l^{S_l}W_l \leq \pi/4$. Obviously, $\angle Q_r^{S_r}Q_l^{S_l}W_l \leq \angle Q_r^{S_l}Q_l^{S_l}W_l$, so $\angle Q_r^{S_r}Q_l^{S_l}W_l \leq \pi/4$.
By symmetry we have inequality $\angle Q_l^{S_l}Q_r^{S_r}W_r \leq \pi/4$. Denote by $O$ the intersection point of $(V_lC_l)$ and $(V_rC_r)$.
From the triangle $\Delta Q_r^{S_r}Q_l^{S_l}O$ we have $\angle Q_r^{S_r}OQ_l^{S_l} \geq \pi/2$.

Note that $2r > |W_lW_r| \geq |C_lC_r|$ and $\angle Q_r^{S_r}OQ_l^{S_l} = \angle C_lOC_r \geq \pi/2$.
It means that $|C_lO| <2r$ and $|C_rO| < 2r$.
Hence the intersection point of the rays $[V_lC_l)$ and $[V_rC_r)$ belongs to $N_r$, that contradicts the optimality of $\Sigma$.
\end{proof}

\subsection{Proof of the central Lemma}

First, let us prove the following technical lemma.

\begin{lemma}
Let $S$ be the closure of a connected component of $\Sigma \setminus N_r$ such that $n(S) = 2$. Let $W \in G_\Sigma \cap S$ be an energetic point of $S$, such that $\overline{B_\varepsilon (W)} \cap S = [J_1W] \cup [WJ_2]$. Then $(QW)$ is the bisector of $\angle J_1WJ_2$, where $Q$ is the end of arc $q_S$ corresponding to $W$ (in the sense of Remark~\ref{corr}).
\label{tech}
\end{lemma}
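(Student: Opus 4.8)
The plan is a first-variation argument at the energetic point $W$, realizing the classical reflection law for a shortest path that has to touch the circle $\partial B_r(Q)$. First I would record the geometry. By property~(a) of energetic points applied to $W$, together with Remark~\ref{corr} (which applies since $n(S)=2$), the point $Q$ is unambiguously the end of $q_S$ matched to $W$, and $\dist(W,Q)=r$ while $B_r(Q)\cap\Sigma=\emptyset$; in particular $W\in\partial B_r(Q)$, and also $W\notin\Int(N_r)$ because $\dist(W,M)\le\dist(W,Q)=r$. Writing $u_i\defeq(J_i-W)/|J_iW|$ for the unit directions of the two segments, a short computation in coordinates centered at $Q$ (using that $[WJ_i]$ starts at the boundary point $W$ of the open ball $B_r(Q)$ and avoids it) gives the half-plane condition $\langle u_i,W-Q\rangle\ge 0$ for $i=1,2$. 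If $\angle J_1WJ_2=\pi$, then $u_1=-u_2$ forces both inner products to vanish, which means $[J_1J_2]$ is tangent to $\partial B_r(Q)$ at $W$ and $(QW)\perp[J_1J_2]$ — exactly the bisector property for the straight angle; the same degenerate conclusion also covers the case $W\in M_r$, since then the two $S$-segments at $W$ must lie on the common tangent line to $M_r$ and $\partial B_r(Q)$ at $W$ (otherwise they would enter $B_r(Q)$ or $\Int(N_r)$), to which $(QW)$ is orthogonal. Hence from now on I may assume $\angle J_1WJ_2<\pi$, so $u_1+u_2\neq 0$, and $W\notin\Int(N_r)\cup M_r$.

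Next comes the variation. Parametrize a short unit-speed arc $t\mapsto W(t)\in\partial B_r(Q)$ with $W(0)=W$; then $W'(0)$ is a unit tangent to the circle at $W$, so $W'(0)\perp(W-Q)$. Replacing $[J_1W]\cup[WJ_2]$ by $[J_1W(t)]\cup[W(t)J_2]$ changes the length of the set by $|J_1W(t)|+|W(t)J_2|-|J_1W|-|WJ_2|$, whose derivative at $t=0$ equals $-\langle W'(0),\,u_1+u_2\rangle$. Since $W'(0)\perp(W-Q)$ and we are in $\R^2$, this derivative vanishes precisely when $u_1+u_2$ is parallel to $W-Q$; as $u_1+u_2$ points along the internal bisector ray of $\angle J_1WJ_2$, this is the same as saying the line $(QW)$ bisects $\angle J_1WJ_2$. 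So if $(QW)$ were not this bisector, the derivative would be a nonzero constant, and choosing the direction of $W(t)$ appropriately (in the borderline cases where some segment is tangent to $\partial B_r(Q)$ at $W$, this forced choice still makes it negative, as one checks) the perturbed set $\Sigma'$ is strictly shorter than $\Sigma$; it is moreover connected and loop-free, since we merely relocate the junction of two segments whose far endpoints $J_1,J_2$ stay fixed and nothing else of $\Sigma$ meets a small neighbourhood of $W$ (here we use $W\notin\Int(N_r)\cup M_r$).

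The remaining — and in my view the main — step is to verify that $\Sigma'$ still covers $M$, so that it is an admissible competitor and we reach a contradiction with the optimality of $\Sigma$. Outside $\overline{B_\varepsilon(W)}$ nothing changes, so only the part of $M$ covered exclusively by $\Sigma\cap\overline{B_\varepsilon(W)}=[J_1W]\cup[WJ_2]$ can be at risk. Choosing $\varepsilon$ smaller than the distance between the two energetic points of $S$, one sees that this at-risk set is pinched around $Q$: if both $\langle u_i,W-Q\rangle>0$, then $[J_1W]\cup[WJ_2]$ covers no point of $M$ near $Q$ other than $Q$ itself (every point of the two segments lies at distance $>r$ from each $P\in M$ near $Q$ with $P\ne Q$, so such $P$ is in fact covered by $\Sigma\setminus\overline{B_\varepsilon(W)}\subset\Sigma'$); and if a segment, say $[WJ_1]$, is tangent to $\partial B_r(Q)$ at $W$, then the small arc of $M$ around $Q$ it covers stays covered by $[W(t)J_1]$, because $\partial B_r(Q)$ bends toward $Q$ and so the moved segment does not recede from $M$ near $Q$. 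In all cases $Q$ itself remains covered since $W(t)\in\partial B_r(Q)$. Carrying out this last bookkeeping carefully (identifying the at-risk arc via Remark~\ref{corr} and controlling the quadratic ``curving toward $Q$'' correction) is the delicate part of the argument; the first-variation computation itself, once the half-plane condition is available, is routine.
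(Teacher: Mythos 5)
Your proposal is essentially the paper's own argument: a local replacement of the corner at $W$ constrained to $\partial B_r(Q)$, giving a first-order length gain exactly when $(QW)$ fails to bisect $\angle J_1WJ_2$ and hence contradicting minimality; the paper realizes the same idea by moving $W$ to the equal-angle point $Y$ on the tangent line to $\partial B_r(Q)$ at $W$ and restoring contact with the circle by a segment $[YV]$ of length $O(\varepsilon^2)$, whereas you slide the corner along the circle itself, which is only a cosmetic difference. Your discussion of why the competitor still covers $M$ near $Q$ (and your treatment of the degenerate straight-angle and $W\in M_r$ cases) is actually more explicit than the paper's, which simply asserts that the modified set still covers $q_S$.
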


\begin{proof}[Proof of Lemma~\ref{tech}]
Suppose the contrary i.e.\ that $\angle J_1WQ \neq \angle QWJ_2$. Let $l$ be the tangent line to $B_r(Q)$ at the point $W$. Then $\H (\Sigma \cap B_\varepsilon (W)) - \H ([J_1Y] \cup [YJ_2]) = O(\varepsilon)$, where $Y$ is such a point in $l$ that $\angle J_1YQ = \angle QYJ_2$. On the other hand, $\dist (Y, \partial B_r(Q)) = O(\varepsilon^2)$; let $V \in \partial B_r(Q)$ be such a point that $\dist (Y, \partial B_r(Q)) = \dist (Y, V)$. Then the set  $(\Sigma \setminus B_\varepsilon(W)) \cup [J_1Y] \cup [J_2Y] \cup [YV]$ is connected, covers $q(S)$ and has strictly lower length than $\Sigma$, giving a desired contradiction.
\end{proof}

Finally, we are ready to prove the central Lemma.

\begin{proof}[Proof of Lemma~\ref{central}]
Obviously, if $S$ is an arc, then the compared values are equal.

It suffices thus to consider the case when $S$ is the closure of a connected component of $\Sigma \setminus N_r$.
Denote by $Q_l$ and $Q_r$ the ends of $q_S$. Let $O$ be an intersection point of the normals to $M$ at points $Q_l$ and $Q_r$. 
It exists unless $\turn (q_S) = 0$ in which case the claim is obvious.
Note that $\turn (q_S) = \angle Q_lOQ_r$ and denote for brevity thus value by $\gamma$. Also one has $|Q_lO|\geq R$, $|Q_rO| \geq R$.
Note that Lemmas~\ref{hex} and~\ref{compcon} as well as Corollary~\ref{co_graph1} hold true when $R > 2a_M(r)+r$ which is
guaranteed when $R > 5r$ (or $R > 4.98 r$ in the case when $M$ is a circumference of radius $R$), i.e.\ under the conditions of the statement being proven.

By Lemma~\ref{compcon} $S$ is a locally minimal network for at most $n(S) + m(S) \leq 4$ points. 
All the possible combinatorial types of such networks are listed in Figures~\ref{lmn23} and~\ref{lmn4}.
Note that if $S$ is a middle component then $m(S) = 2$, otherwise $m(S) = 1$. Let us analyze all the possible types one by one, first when $S$ is a middle component, then for $S$ an ending component.

\begin{enumerate}
\item Let $S$ be a middle component.
By Lemma~\ref{compcon} it is a locally minimal network, moreover it has two entering points (if one, then it is an ending component) and one or two energetic points.

\begin{enumerate}
    
\item \textit{The case $n = 2$, $m = 2$, the combinatorial type (a) on Fig.~\ref{lmn4}  (see Fig.~\ref{pic_m_lmn4(a)}).  \label{m_lmn4(a)}}
\begin{figure}[h]
	\begin{center}
	\definecolor{qqzzqq}{rgb}{0.,0.6,0.}
		\definecolor{qqwuqq}{rgb}{0.,0.39215686274509803,0.}
		\definecolor{uuuuuu}{rgb}{0.26666666666666666,0.26666666666666666,0.26666666666666666}
		\definecolor{ffqqqq}{rgb}{1.,0.,0.}
		\definecolor{xdxdff}{rgb}{0.49019607843137253,0.49019607843137253,1.}
		\definecolor{qqqqff}{rgb}{0.,0.,1.}
		\definecolor{qqwuqq}{rgb}{0.,0.39215686274509803,0.}
\definecolor{qqqqff}{rgb}{0.,0.,1.}
\definecolor{uuuuuu}{rgb}{0.26666666666666666,0.26666666666666666,0.26666666666666666}
    		
\begin{tikzpicture}[line cap=round,line join=round,>=triangle 45,x=1.0cm,y=1.0cm]
\clip(-0.0765659999999999,-0.195068) rectangle (6.203433999999997,3.344932);
\draw [shift={(2.136254000000003,1.0019980000000008)},color=qqwuqq,fill=qqwuqq,fill opacity=0.1] (0,0) -- (48.927970089032904:0.3) arc (48.927970089032904:288.92797008903295:0.3) -- cycle;
\draw [shift={(2.2719753320461393,0.6062189960079492)},color=qqwuqq,fill=qqwuqq,fill opacity=0.1] (0,0) -- (108.92797008903294:0.3) arc (108.92797008903294:348.92797008903284:0.3) -- cycle;
\draw [shift={(0.,0.)},color=qqwuqq,fill=qqwuqq,fill opacity=0.1] (0,0) -- (0.8306750120280327:0.6) arc (0.8306750120280327:37.949878481816086:0.6) -- cycle;
\draw(0.,0.) circle (5.cm);
\draw(0.,0.) circle (2.cm);
\draw [line width=1.6pt, color=ffqqqq] (1.6754605210256992,1.0921685046201857)-- (2.136254000000003,1.0019980000000008);
\draw [line width=1.6pt, color=ffqqqq] (2.136254000000003,1.0019980000000008)-- (2.4447406871602975,1.3559716063798666);
\draw [line width=1.6pt, color=ffqqqq] (2.136254000000003,1.0019980000000008)-- (2.2719753320461393,0.6062189960079492);
\draw [line width=1.6pt, color=ffqqqq] (2.2719753320461393,0.6062189960079492)-- (1.981320542569633,0.2727066328338473);
\draw [line width=1.6pt, color=ffqqqq] (3.0784777466063318,0.4483983387795395)-- (2.2719753320461393,0.6062189960079492);

\draw [color=uuuuuu] (3.9427451188420477,3.074859497253038)-- (0.,0.);
\draw (0.,0.)-- (4.999474528181308,0.0724875304192808);
\draw [dash pattern=on 3pt off 3pt] (2.4447406871602975,1.3559716063798666)-- (3.9427451188420477,3.074859497253038);
\draw [dash pattern=on 3pt off 3pt] (3.0784777466063318,0.4483983387795395)-- (4.999474528181308,0.0724875304192808);

\draw [shift={(0.,0.)}, line width=1.6pt,color=qqzzqq]  plot[domain=0.0:0.6451180954624042,variable=\t]({1.*5.*cos(\t r)+0.*5.*sin(\t r)},{0.*5.*cos(\t r)+1.*5.*sin(\t r)});

\begin{scriptsize}
\draw [fill=uuuuuu] (0.,0.) circle (1.5pt);
\draw[color=uuuuuu] (0.14343399999999967,0.284932) node {$O$};
\draw[color=qqzzqq] (5.24343399999999967,0.284932) node {$Q_r$};
\draw[color=qqzzqq] (4.24343399999999967,3.184932) node {$Q_l$};
\draw[color=qqzzqq] (4.5 , 1.6) node {$q_S$};

\draw [fill=uuuuuu] (-0.07248753041928158,4.999474528181308) circle (1.5pt);
\draw [fill=ffqqqq] (2.4447406871602975,1.3559716063798666) circle (1.5pt);
\draw [fill=ffqqqq] (2.136254000000003,1.0019980000000008) circle (1.5pt);
\draw [fill=ffqqqq] (2.2719753320461393,0.6062189960079492) circle (1.5pt);
\draw [fill=ffqqqq] (1.981320542569633,0.2727066328338473) circle (1.5pt);
\draw [fill=ffqqqq] (1.6754605210256992,1.0921685046201857) circle (1.5pt);
\draw [fill=ffqqqq] (3.0784777466063318,0.4483983387795395) circle (1.5pt);
\draw [fill=qqzzqq] (3.9427451188420477,3.074859497253038) circle (1.5pt);
\draw [fill=qqzzqq] (4.999474528181308,0.0724875304192808) circle (1.5pt);

\draw[color=ffqqqq] (2.534339999999987, 0.704932) node {$V_r$};
\draw[color=ffqqqq] (2.424339999999987, 1.01932) node {$V_l$};

\draw[color=ffqqqq] (3.134339999999987, 0.624932) node {$S$};

\draw[color=qqwuqq] (2.0934339999999987,1.374932) node {$4\pi/3$};
\draw[color=qqwuqq] (2.6634339999999985,0.224932) node {$4\pi/3$};
\draw[color=qqwuqq] (0.7634339999999992,0.184932) node {$\gamma$};
\end{scriptsize}
\end{tikzpicture}
    \caption{Picture to the case~\ref{m_lmn4(a)}: middle component, $n=2, m=2$.}
    \label{pic_m_lmn4(a)}
    
	\end{center}
\end{figure}
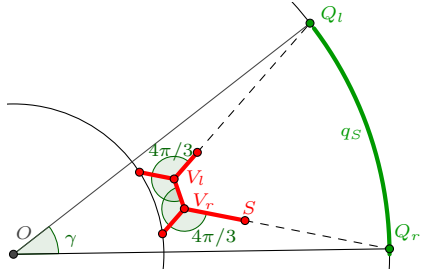   
Denote the Steiner points of $S$ by $V_l$ and $V_r$.
In this case $\turn (S)=\pi/3+\pi/3=2\pi/3$. Assuming 
the contrary (it means that $\gamma \geq 2\pi/3$) and connecting $O$ with $Q_l$ and $Q_r$, we get a (non convex) pentagon $Q_lV_lV_rQ_rO$ with two angles equal to $4 \pi/3$ and one angle at least $2\pi/3$, which is impossible.

\item \textit{The case $n = 2,  m = 2$, the combinatorial type  (b) on Fig.~\ref{lmn4}  (see Fig.~\ref{pic_m_lmn4(b)_1}).} \label{m_lmn4(b)}
\begin{figure}[h]

\begin{center}


\begin{minipage}[h]{0.49\textwidth}

\definecolor{qqwuqq}{rgb}{0.,0.39215686274509803,0.}
\definecolor{qqzzqq}{rgb}{0.,0.6,0.}
\definecolor{ffqqqq}{rgb}{1.,0.,0.}
\definecolor{ttqqqq}{rgb}{0.2,0.,0.}
\begin{tikzpicture}[x=1.0cm,y=1.0cm]
\clip(-5.18,-0.28) rectangle (0.6,5.96);
\draw [shift={(0.,0.)},color=qqwuqq,fill=qqwuqq,fill opacity=0.1] (0,0) -- (100.32253191296135:0.6) arc (100.32253191296135:140.50580788203823:0.6) -- cycle;
\draw [shift={(-1.9,3.18)},color=qqwuqq,fill=qqwuqq,fill opacity=0.1] (0,0) -- (60.:0.6) arc (60.:180.:0.6) -- cycle;
\draw [color=ttqqqq] (0.,0.) circle (5.cm);
\draw(0.,0.) circle (3.cm);
\draw (-1.9,3.18)-- (-2.42,3.18);
\draw (0.,0.)-- (-3.858445282753146,3.18);
\draw (-0.8959456002089823,4.919073234001113)-- (0.,0.);
\draw [shift={(0.,0.)},line width=2.pt,color=qqzzqq]  plot[domain=1.7509584958182614:2.452288965716167,variable=\t]({1.*5.*cos(\t r)+0.*5.*sin(\t r)},{0.*5.*cos(\t r)+1.*5.*sin(\t r)});
\draw [line width=2.pt,color=ffqqqq] (-1.9,3.18)-- (-1.64,3.630333209967908);
\draw [line width=2.pt,color=ffqqqq] (-1.9,3.18)-- (-1.64,2.729666790032092);
\draw [line width=2.pt,color=ffqqqq] (-1.64,2.729666790032092)-- (-1.8545007695124587,2.3581405589738917);
\draw [line width=2.pt,color=ffqqqq] (-1.64,2.729666790032092)-- (-1.2445558305660274,2.729666790032092);
\draw [line width=2.pt,color=ffqqqq] (-1.9,3.18)-- (-2.42,3.18);
\draw (-2.42,3.18)-- (-3.858445282753146,3.18);
\draw (-1.64,3.630333209967908)-- (-0.8959456002089823,4.919073234001113);
\begin{scriptsize}
\draw [fill=ttqqqq] (0.,0.) circle (1.5pt);
\draw[color=ttqqqq] (0.14,0.36) node {$O$};
\draw [fill=ffqqqq] (-2.42,3.18) circle (1.5pt);
\draw [fill=ffqqqq] (-1.9,3.18) circle (1.5pt);
\draw [fill=ffqqqq] (-1.64,2.729666790032092) circle (1.5pt);
\draw [fill=ffqqqq] (-1.64,3.630333209967908) circle (1.5pt);
\draw [fill=ffqqqq] (-1.8545007695124587,2.3581405589738917) circle (1.5pt);
\draw[color=ffqqqq] (-2.00,2.99) node {$B$};
\draw [fill=ffqqqq] (-1.2445558305660274,2.729666790032092) circle (1.5pt);
\draw[color=ffqqqq] (-1.46,2.96) node {$S$};
\draw [fill=qqzzqq] (-3.858445282753146,3.18) circle (1.5pt);
\draw[color=qqzzqq] (-3.98,3.66) node {$Q_l$};
\draw [fill=qqzzqq] (-0.8959456002089823,4.919073234001113) circle (1.5pt);
\draw[color=qqzzqq] (-0.76,5.2) node {$Q_r$};
\draw[color=qqzzqq] (-2.1,4.94) node {$q_S$};
\draw[color=qqwuqq] (-0.54,0.62) node {$\gamma$};
\draw[color=qqwuqq] (-2.08,3.52) node {$2\pi/3$};
\end{scriptsize}
\end{tikzpicture}
\caption{A general picture to the case~\ref{m_lmn4(b)}: middle component, $n = 2,  m = 2$.}
\label{pic_m_lmn4(b)_1}
\end{minipage}\begin{minipage}[h]{0.49\textwidth}
\definecolor{ffqqqq}{rgb}{1.,0.,0.}
\definecolor{wwzzqq}{rgb}{0.4,0.6,0.}
\definecolor{qqzzqq}{rgb}{0.,0.6,0.}
\definecolor{qqwuqq}{rgb}{0.,0.39215686274509803,0.}
\definecolor{xdxdff}{rgb}{0.49019607843137253,0.49019607843137253,1.}
\definecolor{uuuuuu}{rgb}{0.26666666666666666,0.26666666666666666,0.26666666666666666}
\begin{tikzpicture}[line cap=round,line join=round,>=triangle 45,x=1.0cm,y=1.0cm]
\clip(-2.8142709143324165, -0.4006007107986708) rectangle (3.2880494733233574,5.367340407067486);
\draw [shift={(-0.8694609743185158,2.8712432175169518)},color=qqwuqq,fill=qqwuqq,fill opacity=0.1] (0,0) -- (107.02423270507386:0.51568904684415) arc (107.02423270507386:137.02423270507384:0.51568904684415) -- cycle;
\draw[color=qqwuqq,fill=qqwuqq,fill opacity=0.1] (-0.5205949019919707,2.9773566478496094) -- (-0.6267083323246282,3.3262227201761543) -- (-0.9755744046511733,3.2201092898434966) -- (-0.8694609743185158,2.8712432175169518) -- cycle; 
\draw [shift={(0.,0.)},color=qqwuqq,fill=qqwuqq,fill opacity=0.1] (0,0) -- (53.787877677846886:0.51568904684415) arc (53.787877677846886:119.47025934561455:0.51568904684415) -- cycle;
\draw(0.,0.) circle (5.cm);
\draw(0.,0.) circle (3.cm);
\draw (0.,0.)-- (-1.45501492851883,4.783610723897519);
\draw (0.,0.)-- (2.953881932857728,4.034176685116393);
\draw (-2.4598585810853084,4.353055910628873)-- (-1.45501492851883,4.783610723897519);
\draw (2.953881932857728,4.034176685116393)-- (-1.45501492851883,4.783610723897519);
\draw (-2.4598585810853084,4.353055910628873)-- (-0.8694609743185158,2.8712432175169518);
\draw (-0.8694609743185158,2.8712432175169518)-- (2.953881932857728,4.034176685116393);
\draw [line width=1.6pt,color=ffqqqq] (1.0404376432987206,3.452170713708861)-- (-0.8694609743185158,2.8712432175169518);    
\draw [line width=1.6pt,color=ffqqqq] (-0.8694609743185158,2.8712432175169518)-- (-1.14065744192462878,3.1296779506665316);  
\draw (-2.4598585810853084,4.353055910628873)-- (0.,0.);
\draw [shift={(0.,0.)},line width=1.6pt,color=qqzzqq]  plot[domain=0.93877556313839:2.0851493837925004,variable=\t]({1.*5.*cos(\t r)+0.*5.*sin(\t r)},{0.*5.*cos(\t r)+1.*5.*sin(\t r)});

\begin{scriptsize}
\draw [fill=uuuuuu] (0., 0.) circle (1.5pt);
\draw [fill=ffqqqq] (1.0404376432987206,3.452170713708861) circle (1.5pt);
\draw [fill=ffqqqq] (-0.8694609743185158,2.8712432175169518) circle (1.5pt);
\draw[color=uuuuuu] (0.12515665267923792, -0.13541578030911325) node {$O$};
\draw [fill=ffqqqq] (-1.14065744192462878,3.1296779506665316) circle (1.5pt);
\draw[color=ffqqqq] (-0.7015147269558169,2.724843347320764) node {$B$};
\draw [fill=qqzzqq] (-1.45501492851883,4.783610723897519) circle (1.5pt);
\draw [fill=qqzzqq] (2.953881932857728,4.034176685116393) circle (1.5pt);
\draw[color=qqzzqq] (3.0817738545856974,4.274979980588283) node {$Q_r$};
\draw [fill=uuuuuu] (-4.6998985135451825,1.7061517993355064) circle (1.5pt);
\draw[color=qqwuqq] (-1.3824970597467216,3.6098775056376436) node {$\pi/6$};
\draw [fill=qqzzqq] (-2.4598585810853084,4.353055910628873) circle (1.5pt);
\draw[color=qqzzqq] (-2.3329611372778767,4.651583043589577) node {$Q_l$};
\draw[color=ffqqqq] (0.05639811310001794,3.3702049499012785) node {$S$};
\draw [fill=uuuuuu] (-2.0323944419179574,6.6945861246931955) circle (1.5pt);
\draw[color=uuuuuu] (-2.7111331049635865,5.8048574862259255) node {$D'$};
\draw[color=qqwuqq] (-0.63268787074283706,3.5122921098454447) node {$\pi/2$};
\draw[color=qqzzqq] (0.43457008078572784,5.237599534697361) node {$q_S$};
\draw[color=qqzzqq] (0.0267938739974128,0.3557432245727481) node {$\gamma$};
\end{scriptsize}
\end{tikzpicture}
\caption{A marginal picture to the case~\ref{m_lmn4(b)}: middle component, $n = 2,  m = 2$.}
      \label{pic_m_lmn4(b)_2}
    \end{minipage}

\end{center}


\end{figure}      
Note that in this case there exists a Steiner point adjacent to both entering points, and also there exists a Steiner point (we call it $B$) adjacent to both energetic points.
Clearly $\turn (S) = \pi/3$. Let us prove that $\turn(q_S) < \pi/3$.
We evaluate the arc of $M$ bounded by continuations of segments starting from $B$. 
Clearly this arc is maximal when $B$ belongs to $M_r$ (it is the marginal case). 
Hence it is enough to look at the angle in $N \setminus N_r$ of size $2\pi/3$ with vertex $B$ on $M_r$. 
It is well-known that the arc is maximal when $S$ is tangent to $M_r$ and when $M$ is a circumference. 
In this case the normal to $M_r$ at $B$ splits the angle $\angle Q_lBQ_r = 2\pi/3$ in 
two angles: one of size $\pi/2$ and another of size $\pi/6$ (see Fig.~\ref{pic_m_lmn4(b)_2}), so that the size of the arc is 
    \[
    \arccos\left(1-\frac{1}{\delta}\right) + \frac{\pi}{6} - \arcsin\left(\frac{1}{2}\left(1-\frac{1}{\delta}\right)\right),
    \]
where $\delta \defeq R/r$, hence it is strictly less than $\pi/3$ for $\delta \geq 2.9$.

\textit{\item The case $n = 2$, $m = 2$, the combinatorial type (c) on Fig.~\ref{lmn4}.  \label{m_lmn4(c)}}
\begin{figure}
\begin{center}

\definecolor{ttqqqq}{rgb}{0.2,0.,0.}
\definecolor{ttzzqq}{rgb}{0.2,0.6,0.}
\definecolor{qqwuqq}{rgb}{0.,0.39215686274509803,0.}
\definecolor{uuuuuu}{rgb}{0.26666666666666666,0.26666666666666666,0.26666666666666666}
\definecolor{ffqqqq}{rgb}{1.,0.,0.}
\definecolor{xdxdff}{rgb}{0.49019607843137253,0.49019607843137253,1.}
\definecolor{qqqqff}{rgb}{0.,0.,1.}
\begin{tikzpicture}[line cap=round,line join=round,>=triangle 45,x=3.0cm,y=3.0cm]
\clip(7.684817300548393,6.3690559354760365) rectangle (11.61161342494308,9.836467256856569);
\draw [shift={(0.8342481486993167,1.6425662766580746)},color=qqwuqq,fill=qqwuqq,fill opacity=0.1] (0,0) -- (109.1027822100106:0.18878827521128302) arc (109.1027822100106:146.04369252437712:0.18878827521128302) -- cycle;
\draw [shift={(-0.2550202615275577,3.398667261271955)},color=qqwuqq,fill=qqwuqq,fill opacity=0.1] (0,0) -- (-95.68993774918141:0.18878827521128302) arc (-95.68993774918141:-29.28109573597084:0.18878827521128302) -- cycle;
\draw [shift={(-0.2550202615275577,3.398667261271955)},color=qqqqff,fill=qqqqff,fill opacity=0.1] (0,0) -- (-149.28109573597084:0.18878827521128302) arc (-149.28109573597084:-95.68993774918141:0.18878827521128302) -- cycle;
\draw [shift={(-0.669513117927742,3.152374404570396)},color=qqqqff,fill=qqqqff,fill opacity=0.1] (0,0) -- (-72.29809973885571:0.12585885014085535) arc (-72.29809973885571:-9.56241946371832:0.12585885014085535) -- cycle;
\draw [shift={(-0.669513117927742,3.152374404570396)},color=qqwuqq,fill=qqwuqq,fill opacity=0.1] (0,0) -- (-9.562419463718317:0.12585885014085535) arc (-9.562419463718317:30.718904264029195:0.12585885014085535) -- cycle;
\draw [shift={(11.060956949431324,6.464360883202412)},color=qqwuqq,fill=qqwuqq,fill opacity=0.1] (0,0) -- (109.32343564751898:0.18878827521128302) arc (109.32343564751898:147.03128937897878:0.18878827521128302) -- cycle;
\draw [shift={(9.623043592717565,7.992635102293055)},color=qqwuqq,fill=qqwuqq,fill opacity=0.1] (0,0) -- (-97.37744328017351:0.18878827521128302) arc (-97.37744328017351:-5.519660831925377:0.18878827521128302) -- cycle;
\draw [shift={(9.623043592717565,7.992635102293055)},color=qqwuqq,fill=qqwuqq,fill opacity=0.1] (0,0) -- (-5.519660831925366:0.18878827521128302) arc (-5.519660831925366:40.774484146288046:0.18878827521128302) -- cycle;
\draw [shift={(9.980597144254066,8.300989285138138)},color=qqwuqq,fill=qqwuqq,fill opacity=0.1] (0,0) -- (-139.22551585371195:0.18878827521128302) arc (-139.22551585371195:-95.03788695330876:0.18878827521128302) -- cycle;
\draw [shift={(9.95062355741404,7.960979285495686)},color=qqwuqq,fill=qqwuqq,fill opacity=0.1] (0,0) -- (84.96211304669126:0.12585885014085535) arc (84.96211304669126:174.48033916807464:0.12585885014085535) -- cycle;
\draw(0.8342481486993167,1.6425662766580746) circle (6.cm);
\draw(0.8342481486993167,1.6425662766580746) circle (9.cm);
\draw [dash pattern=on 2pt off 2pt,domain=-0.669513117927742:11.61161342494308] plot(\x,{(--0.0476156992637804-0.1414384752429001*\x)/0.045143944073481324});
\draw [line width=2pt,color=ffqqqq] (-0.669513117927742,3.152374404570396)-- (-0.2550202615275577,3.398667261271955);
\draw [domain=7.684817300548393:0.8342481486993167] plot(\x,{(-5.485281875386651--1.6756816043594478*\x)/-2.488391279684798});
\draw [dash pattern=on 2pt off 2pt,domain=-0.2550202615275577:11.61161342494308] plot(\x,{(--1.3691457618740381-0.23581491497895568*\x)/0.4205422988742826});
\draw [domain=7.684817300548393:0.8342481486993167] plot(\x,{(-2.6517221587893767--1.8898660435188797*\x)/-0.6545275682156528});
\draw [dotted] (-0.1475432036241624,4.477365341936394) circle (3.cm);
\draw [dash pattern=on 2pt off 2pt,domain=-1.6541431309854813:11.61161342494308] plot(\x,{(--2.9928679827374025-0.16587347644712658*\x)/0.9846300130577392});
\draw [dash pattern=on 2pt off 2pt,domain=7.684817300548393:-0.1475432036241624] plot(\x,{(-0.6403686246744196-1.0786980806644388*\x)/-0.10747705790339529});
\draw [dotted] (-1.6541431309854813,3.3182478810175224) circle (3.cm);
\draw [shift={(0.8342481486993167,1.6425662766580746)},line width=2pt,color=ttzzqq]  plot[domain=1.9042027726509805:2.5489432863206094,variable=\t]({1.*3.*cos(\t r)+0.*3.*sin(\t r)},{0.*3.*cos(\t r)+1.*3.*sin(\t r)});
\draw [line width=2pt,color=ffqqqq] (-0.6243691738542607,3.0109359293274958)-- (-0.669513117927742,3.152374404570396);
\draw [line width=2pt,color=ffqqqq] (-0.2550202615275577,3.398667261271955)-- (-0.24668820076800613,3.4822923448445473);
\draw [line width=2pt,color=ffqqqq] (-0.2550202615275577,3.398667261271955)-- (-0.18842735425026425,3.3613259502793817);
\draw(11.060956949431324,6.464360883202412) circle (6.cm);
\draw(11.060956949431324,6.464360883202412) circle (9.cm);
\draw [line width=2pt,color=ffqqqq] (9.623043592717565,7.992635102293055)-- (9.980597144254066,8.300989285138138);
\draw [dotted] (10.068255727791723,9.295357930985269) circle (3.cm);
\draw [dotted] (8.544053334295505,8.096903743931112) circle (3.cm);
\draw [shift={(11.060956949431324,6.464360883202412)},line width=2pt,color=ttzzqq]  plot[domain=1.9080539016413451:2.566180103115748,variable=\t]({1.*3.*cos(\t r)+0.*3.*sin(\t r)},{0.*3.*cos(\t r)+1.*3.*sin(\t r)});
\draw [line width=2pt,color=ffqqqq] (9.602339626877747,7.832730535871833)-- (9.623043592717565,7.992635102293055);
\draw [line width=2pt,color=ffqqqq] (9.980597144254066,8.300989285138138)-- (10.146585211676499,8.243103226881018);
\draw [color=ttqqqq] (8.544053334295505,8.096903743931112)-- (11.060956949431324,6.464360883202412);
\draw (11.060956949431324,6.464360883202412)-- (10.068255727791723,9.295357930985269);
\draw [line width=2.pt,color=ffqqqq] (9.623043592717565,7.992635102293055)-- (9.539416584949084,8.000716430586989);
\draw [dash pattern=on 2pt off 2pt,domain=7.684817300548393:11.61161342494308] plot(\x,{(--1.3732893692180141-0.15990456642122197*\x)/-0.02070396583981804});
\draw (10.477727340379703,8.12762195855155)-- (10.146585211676499,8.243103226881018);
\draw (10.068255727791723,9.295357930985269)-- (9.95062355741404,7.960979285495686);
\draw (9.95062355741404,7.960979285495686)-- (8.544053334295505,8.096903743931112);
\begin{scriptsize}

\draw [fill=uuuuuu] (11.060956949431324,6.464360883202412) circle (1.5pt);
\draw[color=uuuuuu] (11.114470966886701,6.595601865729574) node {$O$};
\draw [fill=ffqqqq] (9.623043592717565,7.992635102293055) circle (1.5pt);
\draw[color=ffqqqq] (9.697094190266864,8.137029239898694) node {$V_l$};
\draw [fill=ffqqqq] (9.602339626877747,7.832730535871833) circle (1.5pt);
\draw[color=ffqqqq] (9.711922420238695,7.756089172039518) node {$Y_l$};
\draw [fill=ffqqqq] (9.980597144254066,8.300989285138138) circle (1.5pt);
\draw[color=ffqqqq] (9.880597144254066,8.400989285138138) node {$W_r$};

\draw [fill=ttzzqq] (8.544053334295505,8.096903743931112) circle (1.5pt);
\draw[color=ttzzqq] (8.521778653985082,8.250645745081806) node {$Q_l$};
\draw [fill=ffqqqq] (10.146585211676499,8.243103226881018) circle (1.5pt);
\draw[color=ffqqqq] (10.2019943033655,8.16254454998321) node {$Y_r$};

\draw [fill=ttzzqq] (10.068255727791723,9.295357930985269) circle (1.5pt);
\draw[color=ttzzqq] (10.120186050773944,9.440011878912879) node {$Q_r$};
\draw [fill=uuuuuu] (9.95062355741404,7.960979285495686) circle (1.5pt);
\draw[color=uuuuuu] (10.00062014314013,8.011513929814182) node {$L$};
\draw [fill=ffqqqq] (9.539416584949084,8.000716430586989) circle (1.5pt);
\draw[color=ffqqqq] (9.42796237499924,8.124786894940954) node {$W_l$};
\end{scriptsize}
\end{tikzpicture}

\caption{Picture to the case~\ref{m_lmn4(c)}: middle component, $n = 2$, $m = 2$.}
\label{pic_m_lmn4(c)_1}

\end{center}
\end{figure}  
\begin{figure}
\begin{center}

\definecolor{ffqqqq}{rgb}{1.,0.,0.}
\definecolor{uuuuuu}{rgb}{0.26666666666666666,0.26666666666666666,0.26666666666666666}
\definecolor{qqwuqq}{rgb}{0.,0.39215686274509803,0.}
\definecolor{ttqqqq}{rgb}{0.2,0.,0.}
\definecolor{ttzzqq}{rgb}{0.2,0.6,0.}
\definecolor{qqqqff}{rgb}{0.,0.,1.}
\begin{tikzpicture}[line cap=round,line join=round,>=triangle 45,x=3.0cm,y=3.0cm]
\clip(7.442853729287281,7.795712893980238) rectangle (9.9430849417728,9.804267805945461);
\draw [shift={(10.434814367731908,6.8604919042775485)},color=qqwuqq,fill=qqwuqq,fill opacity=0.1] (0,0) -- (108.80796898250493:0.3833526010404598) arc (108.80796898250493:147.76435727299193:0.3833526010404598) -- cycle;
\draw [shift={(9.0754913785471,8.780395769358007)},color=qqwuqq,fill=qqwuqq,fill opacity=0.1] (0,0) -- (-164.8194938929899:0.1233526010404598) arc (-164.8194938929899:-113.08715652800417:0.1233526010404598) -- cycle;
\draw [shift={(9.0754913785471,8.780395769358007)},color=qqwuqq,fill=qqwuqq,fill opacity=0.1] (0,0) -- (-113.08715652800417:0.1233526010404598) arc (-113.08715652800417:-53.23122241346264:0.1233526010404598) -- cycle;
\draw(10.434814367731908,6.8604919042775485) circle (6.cm);
\draw(10.434814367731908,6.8604919042775485) circle (9.cm);
\draw [dotted] (9.46762229730479,9.700305190200218) circle (3.cm);
\draw [dotted] (7.897229840206437,8.460699630062964) circle (3.cm);
\draw [shift={(10.434814367731908,6.8604919042775485)},line width=1.2pt,color=ttzzqq]  plot[domain=1.8990573111525755:2.5789745515069384,variable=\t]({1.*3.*cos(\t r)+0.*3.*sin(\t r)},{0.*3.*cos(\t r)+1.*3.*sin(\t r)});
\draw [color=ttqqqq] (7.897229840206437,8.460699630062964)-- (10.434814367731908,6.8604919042775485);
\draw (10.434814367731908,6.8604919042775485)-- (9.46762229730479,9.700305190200218);
\draw [dash pattern=on 2pt off 2pt] (9.0754913785471,8.780395769358007)-- (7.897229840206437,8.460699630062964);
\draw [dash pattern=on 2pt off 2pt] (8.718460820019688,7.942829133810776)-- (9.46762229730479,9.700305190200218);
\draw [line width=1.2pt,color=ffqqqq] (8.862335484133046,8.722560463447799)-- (9.0754913785471,8.780395769358007);
\draw [line width=1.2pt,color=ffqqqq] (9.135886835917882,8.381276986758156)-- (9.222664938413201,8.583441144238197);
\draw [line width=1.2pt,color=ffqqqq] (9.470952753540628,8.612910458358897)-- (9.222664938413201,8.583441144238197);
\draw [line width=1.2pt,color=ffqqqq] (9.222664938413201,8.583441144238197)-- (9.0754913785471,8.780395769358007);
\begin{scriptsize}
\draw [fill=qqqqff] (10.434814367731908,6.8604919042775485) circle (2.5pt);
\draw[color=qqqqff] (10.525266059553152,7.087974936130511) node {$O$};

\draw [fill=ttzzqq] (7.897229840206437,8.460699630062964) circle (1.5pt);
\draw[color=ttzzqq] (7.750121551749703,8.608606920257646) node {$Q_l$};

\draw [fill=ttzzqq] (9.46762229730479,9.700305190200218) circle (1.5pt);
\draw[color=ttzzqq] (9.586884556952003,9.588676284003281) node {$Q_r$};

\draw [fill=ffqqqq] (9.0754913785471,8.780395769358007) circle (1.5pt);
\draw[color=ffqqqq] (9.207424056084952,8.817516361367464) node {$W_r$};

\draw [fill=ffqqqq] (8.862335484133046,8.722560463447799) circle (1.5pt);
\draw[color=ffqqqq] (8.668503054350852,8.779181101263418) node {$W_l$};

\draw [fill=uuuuuu] (8.718460820019688,7.942829133810776) circle (1.5pt);
\draw[color=uuuuuu] (8.87129303500981,8.046356438731646) node {$K$};

\draw [fill=ffqqqq] (9.222664938413201,8.583441144238197) circle (1.5pt);
\draw[color=ffqqqq] (9.151316156258362,8.281947960673828) node {$Y_l$};

\draw [fill=ffqqqq] (9.135886835917882,8.381276986758156) circle (1.5pt);

\draw [fill=ffqqqq] (9.470952753540628,8.612910458358897) circle (1.5pt);
\draw[color=ffqqqq] (9.454106136917321,8.507504800743191) node {$Y_r$};

\draw[color=qqwuqq] (8.929605173865305,8.680822719910828) node {$\alpha$};
\draw[color=qqwuqq] (9.130776657125411,8.62969747914783) node {$\alpha$};
\end{scriptsize}
\end{tikzpicture}

\caption{Picture to the case~\ref{m_lmn4(c)}: middle component, $n = 2$, $m = 2$.}
\label{pic_m_lmn4(c)_2}

\end{center}
\end{figure}  

There are two possibilities for $S$ in this case, see Fig.~\ref{pic_m_lmn4(c)_1} and Fig.~\ref{pic_m_lmn4(c)_2}. 

{\sc The case on Fig.~\ref{pic_m_lmn4(c)_2}} can be reduced to the previous case~\ref{m_lmn4(b)}. Obviously, $\turn (S) = \pi/3$. Let us fix the entering points $Y_l$, $Y_r$ and the left energetic point $W_l$ and move the right energetic point $W_r$ to the right (in the direction of the ray $[W_lW_r)$). Then at some time the combinatorial type changes to (b) on Fig.~\ref{lmn4}, during this process
 $\turn (S) = \pi/3$, and $\turn (q_S)$ grows, but $\turn (q_S) \leq \pi/3$. By case~\ref{m_lmn4(b)}. 
 
{\sc The case on Fig.~\ref{pic_m_lmn4(c)_1}}:
denote the energetic points of $S$ by $W_l$ and $W_r$, and the entering points by $Y_l$, $Y_r$ respectively, and the branching point by $V_l$ (without loss of generality it is connected with $W_l$ and $Y_l$). Let $2\beta := \angle V_lW_rY_r$, and note that $\angle Y_lV_lW_r = 2\pi/3$. 
Then $\turn (S) = (\pi - 2\pi/3) + (\pi - 2\beta) = 4\pi/3 - 2\beta$. Assume the contrary (i.e.\ in this case $\gamma \geq 4\pi/3 - 2\beta$) and 
call $L$ the point of intersection of $(Q_lW_l)$ and $(Q_rW_r)$. By Lemma~\ref{tech} $\angle LW_rV_l = \angle Y_rW_rV_l/2 = \beta$.
Then $$\pi - \pi/3 - \beta = \angle Q_lLQ_r > \angle Q_lOQ_r = \gamma,$$ 
(the first equality coming from $\Delta V_lW_rL$)
which implies $$\gamma \geq  4\pi/3 - 2\beta > 2\pi/3 - \beta = \angle Q_lLQ_r > \gamma,$$ a contradiction.

\textit{\item The case $n = 2$, $m = 2$, the combinatorial type (d) on Fig.~\ref{lmn4} (see Fig.~\ref{pic_m_lmn4(d)}).  \label{m_lmn4(d)}}
\begin{figure}
\begin{center}

\definecolor{ttzzqq}{rgb}{0.2,0.6,0.}
\definecolor{qqwuqq}{rgb}{0.,0.39215686274509803,0.}
\definecolor{uuuuuu}{rgb}{0.26666666666666666,0.26666666666666666,0.26666666666666666}
\definecolor{ffqqqq}{rgb}{1.,0.,0.}
\definecolor{xdxdff}{rgb}{0.49019607843137253,0.49019607843137253,1.}
\definecolor{qqqqff}{rgb}{0.,0.,1.}
\begin{tikzpicture}[line cap=round,line join=round,>=triangle 45,x=2.5cm,y=2.5cm]
\clip(7.954817300548392,6.369055935476034) rectangle (11.411613424943078,9.836467256856569);
\draw [shift={(0.8342481486993167,1.6425662766580746)},color=qqwuqq,fill=qqwuqq,fill opacity=0.1] (0,0) -- (109.1027822100106:0.18878827521128297) arc (109.1027822100106:146.04369252437712:0.18878827521128297) -- cycle;
\draw [shift={(-0.2550202615275577,3.398667261271955)},color=qqwuqq,fill=qqwuqq,fill opacity=0.1] (0,0) -- (-95.68993774918141:0.18878827521128297) arc (-95.68993774918141:-29.28109573597084:0.18878827521128297) -- cycle;
\draw [shift={(-0.2550202615275577,3.398667261271955)},color=qqqqff,fill=qqqqff,fill opacity=0.1] (0,0) -- (-149.28109573597084:0.18878827521128297) arc (-149.28109573597084:-95.68993774918141:0.18878827521128297) -- cycle;
\draw [shift={(-0.669513117927742,3.152374404570396)},color=qqqqff,fill=qqqqff,fill opacity=0.1] (0,0) -- (-72.29809973885571:0.12585885014085532) arc (-72.29809973885571:-9.56241946371832:0.12585885014085532) -- cycle;
\draw [shift={(-0.669513117927742,3.152374404570396)},color=qqwuqq,fill=qqwuqq,fill opacity=0.1] (0,0) -- (-9.562419463718317:0.12585885014085532) arc (-9.562419463718317:30.718904264029195:0.12585885014085532) -- cycle;
\draw [shift={(11.060956949431324,6.464360883202412)},color=qqwuqq,fill=qqwuqq,fill opacity=0.1] (0,0) -- (109.10278221001069:0.18878827521128297) arc (109.10278221001069:146.04369252437712:0.18878827521128297) -- cycle;
\draw [shift={(9.557195682804265,7.97416901111473)},color=qqwuqq,fill=qqwuqq,fill opacity=0.1] (0,0) -- (-72.29809973885514:0.18878827521128297) arc (-72.29809973885514:-9.562419463718436:0.18878827521128297) -- cycle;
\draw [shift={(9.557195682804265,7.97416901111473)},color=qqwuqq,fill=qqwuqq,fill opacity=0.1] (0,0) -- (-9.56241946371844:0.18878827521128297) arc (-9.56241946371844:37.664269383202026:0.18878827521128297) -- cycle;
\draw [shift={(9.980597144254066,8.300989285138138)},color=qqwuqq,fill=qqwuqq,fill opacity=0.1] (0,0) -- (-142.33573061679797:0.18878827521128297) arc (-142.33573061679797:-95.63962260795269:0.18878827521128297) -- cycle;
\draw [shift={(9.980597144254066,8.300989285138138)},color=qqwuqq,fill=qqwuqq,fill opacity=0.1] (0,0) -- (-95.6396226079527:0.18878827521128297) arc (-95.6396226079527:-22.335730616798642:0.18878827521128297) -- cycle;
\draw(0.8342481486993167,1.6425662766580746) circle (5.cm);
\draw(0.8342481486993167,1.6425662766580746) circle (7.5cm);
\draw [dash pattern=on 2pt off 2pt,domain=-0.669513117927742:11.611613424943078] plot(\x,{(--0.0476156992637804-0.1414384752429001*\x)/0.045143944073481324});
\draw [line width=2pt,color=ffqqqq] (-0.669513117927742,3.152374404570396)-- (-0.2550202615275577,3.398667261271955);
\draw [domain=7.684817300548392:0.8342481486993167] plot(\x,{(-5.485281875386651--1.6756816043594478*\x)/-2.488391279684798});
\draw [dash pattern=on 2pt off 2pt,domain=-0.2550202615275577:11.611613424943078] plot(\x,{(--1.3691457618740381-0.23581491497895568*\x)/0.4205422988742826});
\draw [domain=7.684817300548392:0.8342481486993167] plot(\x,{(-2.6517221587893767--1.8898660435188797*\x)/-0.6545275682156528});
\draw [dotted] (-0.1475432036241624,4.477365341936394) circle (1.cm);
\draw [dash pattern=on 2pt off 2pt,domain=-1.6541431309854813:11.611613424943078] plot(\x,{(--2.9928679827374025-0.16587347644712658*\x)/0.9846300130577392});
\draw [dash pattern=on 2pt off 2pt,domain=7.684817300548392:-0.1475432036241624] plot(\x,{(-0.6403686246744196-1.0786980806644388*\x)/-0.10747705790339529});
\draw [dotted] (-1.6541431309854813,3.3182478810175224) circle (1.cm);
\draw [shift={(0.8342481486993167,1.6425662766580746)},line width=2pt,color=ttzzqq]  plot[domain=1.9042027726509805:2.5489432863206094,variable=\t]({1.*3.*cos(\t r)+0.*3.*sin(\t r)},{0.*3.*cos(\t r)+1.*3.*sin(\t r)});
\draw [line width=2pt,color=ffqqqq] (-0.6243691738542607,3.0109359293274958)-- (-0.669513117927742,3.152374404570396);
\draw [line width=2pt,color=ffqqqq] (-0.2550202615275577,3.398667261271955)-- (-0.24668820076800613,3.4822923448445473);
\draw [line width=2pt,color=ffqqqq] (-0.2550202615275577,3.398667261271955)-- (-0.18842735425026425,3.3613259502793817);
\draw(11.060956949431324,6.464360883202412) circle (5.cm);
\draw(11.060956949431324,6.464360883202412) circle (7.5cm);
\draw [line width=2pt,color=ffqqqq] (9.557195682804265,7.97416901111473)-- (9.980597144254066,8.300989285138138);
\draw [dotted] (10.079165597107842,9.299159948480728) circle (2.5cm);
\draw [dotted] (8.572565669746526,8.14004248756186) circle (2.5cm);
\draw [shift={(11.060956949431324,6.464360883202412)},line width=2pt,color=ttzzqq]  plot[domain=1.9042027726509818:2.548943286320609,variable=\t]({1.*3.*cos(\t r)+0.*3.*sin(\t r)},{0.*3.*cos(\t r)+1.*3.*sin(\t r)});
\draw [line width=2pt,color=ffqqqq] (9.602339626877747,7.832730535871833)-- (9.557195682804265,7.97416901111473);
\draw [line width=2pt,color=ffqqqq] (9.980597144254066,8.300989285138138)-- (10.135484038520287,8.23735267426347);
\draw (8.572565669746526,8.14004248756186)-- (11.060956949431324,6.464360883202412);
\draw (11.060956949431324,6.464360883202412)-- (10.079165597107842,9.299159948480728);
\draw [domain=8.572565669746526:11.611613424943078] plot(\x,{(--9.436891410530762-0.16587347644713013*\x)/0.984630013057739});
\draw [domain=7.684817300548392:10.079165597107842] plot(\x,{(--9.14412360104339-0.9981706633425897*\x)/-0.09856845285377602});
\begin{scriptsize}

\draw [fill=uuuuuu] (11.060956949431324,6.464360883202412) circle (1.5pt);
\draw[color=uuuuuu] (11.114470966886701,6.595601865729572) node {$O$};
\draw [fill=ffqqqq] (9.557195682804265,7.97416901111473) circle (1.5pt);
\draw[color=ffqqqq] (9.456750650210523,8.087029239898696) node {$W_l$};
\draw [fill=ffqqqq] (9.602339626877747,7.832730535871833) circle (1.5pt);
\draw[color=ffqqqq] (9.660801247759823,7.717463332264884) node {$Y_l$};
\draw [fill=ffqqqq] (9.980597144254066,8.300989285138138) circle (1.5pt);
\draw[color=ffqqqq] (9.874074212886593,8.42055519277196) node {$W_r$};
\draw [fill=ttzzqq] (8.572565669746526,8.14004248756186) circle (1.5pt);
\draw[color=ttzzqq] (8.546950424013254,8.288403400124063) node {$Q_1$};
\draw [fill=ffqqqq] (10.135484038520287,8.23735267426347) circle (1.5pt);
\draw[color=ffqqqq] (10.189408418351414,8.370211652715618) node {$Y_r$};
\draw [fill=ttzzqq] (10.079165597107842,9.299159948480728) circle (1.5pt);
\draw[color=ttzzqq] (10.126478993280987,9.446304821419922) node {$Q_2$};
\draw [fill=uuuuuu] (9.941923775683486,7.909356659869646) circle (1.5pt);
\draw[color=uuuuuu] (9.988034258126046,7.998928044800097) node {$L$};
\draw[color=qqwuqq] (10.815158046999385,6.776723038208444) node {$\gamma$};

\draw[color=qqwuqq] (9.641831867928847,7.920139769588816) node {$\alpha$};
\draw[color=qqwuqq] (9.68139777556266,8.006685699842353) node {$\alpha$};

\draw[color=qqwuqq] (9.90844837311196,8.195473975053635) node {$\beta$};
\draw[color=qqwuqq] (10.011560210999312, 8.208302205025466) node {$\beta$};
\end{scriptsize}
\end{tikzpicture}

\caption{Picture to the case~\ref{m_lmn4(d)}: middle component, $n = 2$, $m = 2$.}
\label{pic_m_lmn4(d)}

\end{center}

\end{figure}  
Denote the energetic points of $S$ by $W_l$ and $W_r$, and the entering points by $Y_l$, $Y_r$ respectively. Let $2\alpha := \angle Y_lW_lW_r$, $2\beta := \angle W_lW_rY_r$.
Then $\turn (S) = (\pi - 2\alpha) + (\pi - 2\beta)$. Assume the contrary (it means that $\gamma \geq 2\pi - 2\alpha - 2\beta$) and 
denote by $L$ the point of intersection of $(Q_lW_l)$ and $(Q_rW_r)$. 
By Lemma~\ref{tech} $\angle LW_lW_r = \angle Y_lW_lW_r/2 = \alpha$, $\angle LW_rW_l = \angle Y_rW_rW_l/2 = \beta$.  
Then $$\pi - \alpha - \beta = \angle Q_lLQ_r > \angle Q_lOQ_r = \gamma,$$ 
(the first equality coming from $\Delta W_lW_rL$) which implies 
$$\gamma \geq  2\pi - 2\alpha - 2\beta > \pi - \alpha - \beta = \angle Q_lLQ_r > \gamma,$$ 
a contradiction.

\item \textit{The case $n = 1$, $m = 2$, the combinatorial type (b) on Fig.~\ref{lmn23} (see Fig.~\ref{pic_m_lmn23(b)}, Fig.~\ref{pic_m_lmn23(b)_2},  Fig.~\ref{pic_m_lmn23(b)_3}). \label{m_lmn23(b)}}
\begin{figure}
\begin{center}
    
\definecolor{qqwuqq}{rgb}{0.,0.39215686274509803,0.}
    \definecolor{qqzzqq}{rgb}{0.,0.6,0.}
    \definecolor{ttttff}{rgb}{0.2,0.2,1.}
    \definecolor{uuuuuu}{rgb}{0.26666666666666666,0.26666666666666666,0.26666666666666666}
    \definecolor{ffqqqq}{rgb}{1.,0.,0.}
    \definecolor{qqqqff}{rgb}{0.,0.,1.}
    \definecolor{xdxdff}{rgb}{0.49019607843137253,0.49019607843137253,1.}
    \begin{tikzpicture}[line cap=round,line join=round,>=triangle 45,x=1.0cm,y=1.0cm]
    \clip(-3.7,-0.22) rectangle (0.86,5.4);
    \draw [shift={(0.,0.)},color=qqwuqq,fill=qqwuqq,fill opacity=0.1] (0,0) -- (87.36226398798078:0.6) arc (87.36226398798078:109.3186313943467:0.6) -- cycle;
    \draw [shift={(-1.2990482868580284,3.705636253936034)},color=qqwuqq,fill=qqwuqq,fill opacity=0.1] (0,0) -- (-70.68136860565333:0.6) arc (-70.68136860565333:40.13067521228883:0.6) -- cycle;
    \draw(0.,0.) circle (5.cm);
    \draw(0.,0.) circle (3.cm);
    
    \draw [line width=1.6pt, color=ffqqqq] (-1.6888952863865674,2.4794420161845356)-- (-1.14,2.98);
    \draw [line width=1.6pt, color=ffqqqq] (-1.2990482868580284,3.705636253936034)-- (-1.14,2.98);

    \draw [dash pattern=on 3pt off 3pt] (-1.2990482868580284,3.705636253936034) circle (2.cm);
    \draw (-3.298368942978354,3.7577602792083287)-- (0.,0.);
    
   \draw (0.23010458949184187,4.994702381313104)-- (0.,0.);
    \draw (-1.2990482868580284,3.705636253936034)-- (0.,0.);
    \draw  (-1.2990482868580284,3.705636253936034)-- (0.23010458949184187,4.994702381313104);
    \draw [line width=1.6pt, color=ffqqqq] (-0.8324030180784497,2.8822049225365443)-- (-1.14,2.98);
    
    \draw (-3.298368942978354,3.7577602792083287)-- (-1.2990482868580284,3.705636253936034);
    \draw [shift={(0.,0.)}, line width=1.6pt,color=qqzzqq]  plot[domain=1.5247591485867362:2.291180954624042,variable=\t]({1.*5.*cos(\t r)+0.*5.*sin(\t r)},{0.*5.*cos(\t r)+1.*5.*sin(\t r)});
    \begin{scriptsize}
    
    \draw [fill=uuuuuu] (0.,0.) circle (1.5pt);
    \draw[color=uuuuuu] (0.14,0.28) node {$O$};
    \draw [fill=ffqqqq] (-1.14,2.98) circle (1.5pt);
    \draw [fill=ffqqqq] (-1.6888952863865674,2.4794420161845356) circle (1.5pt);
    
    \draw [fill=ffqqqq] (-1.2990482868580284,3.705636253936034) circle (1.5pt);
    \draw [fill=qqzzqq] (-3.298368942978354,3.7577602792083287) circle (1.5pt);
    \draw [fill=qqzzqq] (0.23010458949184187,4.994702381313104) circle (1.5pt);
    \draw[color=ttttff] (0.28,2.1) node {$\geq R$};
    \draw[color=qqzzqq] (0.58,5.12) node {$Q_r$};
    \draw[color=qqzzqq] (-3.55,4.1) node {$Q_l$};
    \draw[color=qqzzqq] (-1.53,5.0) node {$q_S$};

    \draw[color=ttttff] (-0.6,4.42) node {$r$};
    \draw [fill=ffqqqq] (-0.8324030180784497,2.8822049225365443) circle (1.5pt);
    \draw[color=qqwuqq] (0.258,0.78) node {$\alpha \geq \pi/6$};
    \draw[color=qqwuqq] (-0.606,3.68) node {$\beta$};
    \draw[color=ffqqqq] (-1.406,3.38) node {$S$};
    \draw[color=ffqqqq] (-1.406,3.98) node {$W$};
    
    \end{scriptsize}
			\end{tikzpicture}
	\caption{Picture to the case~\ref{m_lmn23(b)}: middle component, $m=2, n=1$.}
	\label{pic_m_lmn23(b)}
	\end{center}

\end{figure}
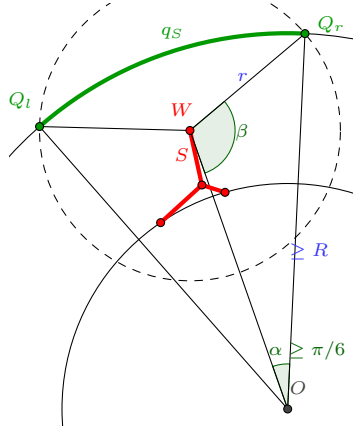    

Clearly, $\turn (S)=\pi/3$. 
To prove the statement, assume the contrary (i.e.\ $\gamma \geq \pi/3$) and as in the previous case connect $O$ with $Q_l$ and $Q_r$.
Denote the energetic point of $S$ by $W$.
Let us consider three subcases: 

\begin{itemize}
\item the point $W$ covers both $Q_r$ and $Q_l$ (see Fig.~\ref{pic_m_lmn23(b)}); 
\item the point $W$ covers $Q_l$ and $Q_r$ is covered by an entering point (see Fig.~\ref{pic_m_lmn23(b)_2}); 
\item $W$ covers $Q_l$ and $Q_r$ is covered by $H \in S \setminus (M_r \cup W)$ (see Fig.~\ref{pic_m_lmn23(b)_3}). 
\end{itemize}

{\sc In the subcase} (i) $|WQ_r| = |WQ_l| = r$.
Let us connect $O$ with $W$, and note that the angle $\angle Q_lOQ_r = \gamma$ splits into two parts; 
let us pick the largest one (without loss of generality it is $\angle WOQ_r$). Consider the triangle $\Delta OQ_rW$ with side $|OQ_r| \geq R$ and acute angle ($\alpha$ on the Fig.~\ref{pic_m_lmn23(b)}) at least $\pi/6$ against the side $|WQ_r| = r$. Recalling that $R > 2r$ and denoting by $\beta \defeq \angle OWQ_r$, by the law of sines for triangle 
$\Delta OQ_rW$ we get 
    \[
    \sin \beta = \frac{|OQ_r|}{r}\sin \alpha \geq \frac{R}{2r} > 1,
    \]
a contradiction.

  \begin{figure}
  \begin{center}
  
 \definecolor{zzttqq}{rgb}{0.6,0.2,0.}
\definecolor{qqwuqq}{rgb}{0.,0.39215686274509803,0.}
\definecolor{ffqqqq}{rgb}{1.,0.,0.}
\definecolor{uuuuuu}{rgb}{0.26666666666666666,0.26666666666666666,0.26666666666666666}
\begin{tikzpicture}[line cap=round,line join=round,>=triangle 45,x=1.5cm,y=1.5cm]
\clip(-4.610456379691199,-0.1821286294868982) rectangle (0.19983288376133055,1.972896537396661);
\fill[color=zzttqq,fill=zzttqq,fill opacity=0.1] (-4.49803372198501,0.13301366804085513) -- (0.,0.) -- (-2.80019456340442,1.0765270117745906) -- (-3.0766623281492755,0.3573963970035279) -- cycle;
\draw[color=qqwuqq,fill=qqwuqq,fill opacity=0.1] (-2.8712174710259895,0.8917866776967163) -- (-2.686477136948115,0.8207637700751464) -- (-2.6154542293265455,1.005504104153021) -- (-2.80019456340442,1.0765270117745906) -- cycle; 
\draw(0.,0.) circle (3.cm);
\draw(0.,0.) circle (4.5cm);
\draw [line width=1.6pt,color=ffqqqq] (-3.0766623281492755,0.3573963970035279)-- (-2.80019456340442,1.0765270117745906);
\draw [dash pattern=on 1pt off 1pt] (-4.49803372198501,0.13301366804085513)-- (-3.380592368902775,0.30941692291690637);
\draw [line width=1.6pt,color=ffqqqq] (-3.380592368902775,0.30941692291690637)-- (-3.0766623281492755,0.3573963970035279);
\draw [line width=1.6pt,color=ffqqqq] (-3.0766623281492755,0.3573963970035279)-- (-2.9895857492994153,0.2497539741142314);
\draw [shift={(0.,0.)},line width=1.6pt,color=qqwuqq]  plot[domain=2.7745653230580634:3.1120297547235607,variable=\t]({1.*4.5*cos(\t r)+0.*4.5*sin(\t r)},{0.*4.5*cos(\t r)+1.*4.5*sin(\t r)});
\draw [color=zzttqq] (-4.49803372198501,0.13301366804085513)-- (0.,0.);
\draw [color=zzttqq] (0.,0.)-- (-2.80019456340442,1.0765270117745906);
\draw [color=zzttqq] (-2.80019456340442,1.0765270117745906)-- (-3.0766623281492755,0.3573963970035279);
\draw [color=zzttqq] (-3.0766623281492755,0.3573963970035279)-- (-4.49803372198501,0.13301366804085513);
\draw (-4.20029184510663,1.6147905176618862)-- (-2.80019456340442,1.0765270117745906);
\begin{scriptsize}
\draw [fill=uuuuuu] (0.,0.) circle (1.5pt);
\draw[color=ffqqqq] (-2.8511253828103217,1.2447857804646967) node {$I$};
\draw [fill=qqwuqq] (-4.49803372198501,0.13301366804085513) circle (1.5pt);
\draw[color=qqwuqq] (-4.302561508752775,0.3689482526507701) node {$Q_l$};
\draw [fill=uuuuuu] (0.,0.) circle (1.5pt);
\draw[color=uuuuuu] (0.09456963646904813,0.13832618619204518) node {$O$};
\draw [fill=ffqqqq] (-2.80019456340442,1.0765270117745906) circle (1.5pt);
\draw [fill=ffqqqq] (-3.380592368902775,0.30941692291690637) circle (1.5pt);
\draw [fill=ffqqqq] (-3.380592368902775,0.30941692291690637) circle (1.5pt);
\draw [fill=ffqqqq] (-3.0766623281492755,0.3573963970035279) circle (1.5pt);
\draw [fill=ffqqqq] (-2.9895857492994153,0.2497539741142314) circle (1.5pt);
\draw[color=ffqqqq] (-3.096340844165524,0.596938695463354) node {$C$};
\draw[color=ffqqqq] (-3.496340844165524,0.496938695463354) node {$W$};
\draw [fill=qqwuqq] (-4.20029184510663,1.6147905176618862) circle (1.5pt);
\draw[color=qqwuqq] (-4.1319872282311305,1.7524417217177175) node {$Q_r$};
\draw[color=qqwuqq] (-4.204594958908732,1.0480155776010223) node {$q_S$};
\draw[color=qqwuqq] (-2.5618908070802865,0.7093791753611733) node {$\pi/2$};
\end{scriptsize}
\end{tikzpicture}

	\caption{Picture to the case~\ref{m_lmn23(b)}: middle component, $m=2, n=1$.}
	\label{pic_m_lmn23(b)_2}

\end{center}
\end{figure}
    

{\sc In the subcase} (ii) $Q_r$ is covered by the entering point $I$. Then $(CI)$ is perpendicular to $(IQ_r)$, where $C$ is the branching point of $S$, so points $Q_r$, $O$, $I$ lie on the same line. Consider the sum of the angles in the non convex quadrilateral $Q_lCIO$: it is $\angle Q_l + \angle C + \angle I + \angle O \geq \angle Q_l + 4\pi/3 + \pi/2 + \pi/3 > 2\pi$, a contradiction.

\begin{figure}
\begin{center}
  
\definecolor{zzttqq}{rgb}{0.6,0.2,0.} 
\definecolor{ffqqqq}{rgb}{1.,0.,0.} 
\definecolor{uuuuuu}{rgb}{0.26666666666666666,0.26666666666666666,0.26666666666666666} 
\definecolor{qqwuqq}{rgb}{0.,0.39215686274509803,0.} 
\begin{tikzpicture}[line cap=round,line join=round,>=triangle 45,x=2.0cm,y=2.0cm] 
\clip(-4.8753457531577835,-0.0786821450474917) rectangle (0.44365474793524708,1.2502305127944033); 
\fill[color=zzttqq,fill=zzttqq,fill opacity=0.1] (-4.49803372198501,0.13301366804085513) -- (0.,0.) -- (-4.375675180720522,1.0504602385747044) -- (-3.034901895363588,0.46602101633298965) -- (-3.0766623281492755,0.3573963970035279) -- cycle; 
\draw(0.,0.) circle (9.0cm); 
\draw [line width=1.6pt,dash pattern=on 1pt off 1pt] (-4.49803372198501,0.13301366804085513)-- (-3.38059236890277,0.30941692291690703); 
\draw [line width=1.6pt,color=ffqqqq] (-3.38059236890277,0.30941692291690703)-- (-3.0766623281492755,0.3573963970035279); 
\draw(0.,0.) circle (6.10cm); 
\draw [line width=1.6pt,color=ffqqqq] (-3.0766623281492755,0.3573963970035279)-- (-2.9975578488964283,0.5631580084837802); 
\draw [line width=1.6pt,color=ffqqqq] (-3.0346635714410906,0.3054783268390784)-- (-3.0766623281492755,0.3573963970035279); 
\draw [shift={(0.,0.)},line width=1.6pt,color=qqwuqq] plot[domain=2.9059832370405028:3.1120297547235607,variable=\t]({1.*4.5*cos(\t r)+0.*4.5*sin(\t r)},{0.*4.5*cos(\t r)+1.*4.5*sin(\t r)}); 
\draw [color=zzttqq] (-4.49803372198501,0.13301366804085513)-- (0.,0.); 
\draw [color=zzttqq] (0.,0.)-- (-4.375675180720522,1.0504602385747044); 
\draw [color=zzttqq] (-4.375675180720522,1.0504602385747044)-- (-3.034901895363588,0.46602101633298965); 
\draw [color=zzttqq] (-3.034901895363588,0.46602101633298965)-- (-3.0766623281492755,0.3573963970035279); 
\draw [color=zzttqq] (-3.0766623281492755,0.3573963970035279)-- (-4.49803372198501,0.13301366804085513); 
\begin{scriptsize} 
\draw [fill=qqwuqq] (-4.49803372198501,0.13301366804085513) circle (1.5pt); 
\draw[color=qqwuqq] (-4.65918247728505,0.2821315033108384) node {$Q_l$}; 
\draw [fill=uuuuuu] (0.,0.) circle (1.5pt); 
\draw[color=uuuuuu] (0.0853351448086585,0.12381190018424948) node {$O$}; 
\draw [fill=qqwuqq] (-4.49803372198501,0.13301366804085513) circle (1.5pt); 
\draw [fill=ffqqqq] (-3.38059236890277,0.30941692291690703) circle (1.5pt); 
\draw[color=ffqqqq] (-3.3921497218918976,0.4404511064374273) node {$W$}; 
\draw [fill=ffqqqq] (-3.0766623281492755,0.3573963970035279) circle (1.5pt); 
\draw [fill=ffqqqq] (-2.9975578488964283,0.5631580084837802) circle (1.5pt); 
\draw [fill=ffqqqq] (-3.0346635714410906,0.3054783268390784) circle (1.5pt); 
\draw [fill=qqwuqq] (-4.375675180720522,1.0504602385747044) circle (1.5pt); 
\draw[color=qqwuqq] (-4.512431608707691,1.1868669880422411) node {$Q_r$}; 
\draw [fill=ffqqqq] (-3.034901895363588,0.46602101633298965) circle (1.5pt); 
\draw[color=ffqqqq] (-2.9555105156387205,0.4075662430710489) node {$H$}; 
\draw[color=ffqqqq] (-3.2055105156387205,0.2575662430710489) node {$C$}; 
\draw[color=qqwuqq] (-4.56080704299637,0.7526925459370888) node {$q_S$}; 
\end{scriptsize} 
\end{tikzpicture} 

\caption{Picture to the case~\ref{m_lmn23(b)}: middle component, $m=2, n=1$.}
	\label{pic_m_lmn23(b)_3}

\end{center}
\end{figure}

{\sc In the subcase} (iii) $Q_r$ is covered by $H \in ]CI[$, where $C$ is the branching point of $S$, $I$ is an entering point of $S$.
Note that $(CI)$ is perpendicular to $(HQ_r)$; points $Q_l$, $W$, $C$ lie on the same line.
Consider the sum of the angles in the non convex pentagon $Q_lCHQ_rO$: it is $\angle Q_l + \angle C + \angle H + \angle Q_r + \angle O \geq \angle Q_l + 4\pi/3 + 3\pi/2 + \angle Q_r + \pi/3 > 3\pi$, a contradiction.

\item \textit{The last case $n = 1$, $m = 2$, the combinatorial type (c) on Fig.~\ref{lmn23} (see Fig.~\ref{Pic_m_lmn23(c)}). \label{m_lmn23(c)}}
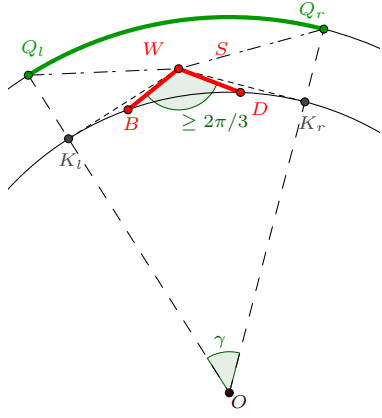
\begin{figure}[h]
\begin{center}
   \definecolor{ffqqqq}{rgb}{1.,0.,0.}
\definecolor{qqzzqq}{rgb}{0.,0.6,0.}
\definecolor{qqwuqq}{rgb}{0.,0.39215686274509803,0.}
\definecolor{uuuuuu}{rgb}{0.26666666666666666,0.26666666666666666,0.26666666666666666}
\definecolor{ttqqqq}{rgb}{0.2,0.,0.}
\begin{tikzpicture}[line cap=round,line join=round,>=triangle 45,x=1.0cm,y=1.0cm]
\clip(-2.933603001946984,-0.236561137307231) rectangle (1.9935627043530988,5.227510471506927);
\draw [shift={(-0.6741205292953993,4.312389449139797)},color=qqwuqq,fill=qqwuqq,fill opacity=0.1] (0,0) -- (-140.95797006931792:0.5474628562555648) arc (-140.95797006931792:-20.95797006931793:0.5474628562555648) -- cycle;
\draw [shift={(0.,0.)},color=qqwuqq,fill=qqwuqq,fill opacity=0.1] (0,0) -- (75.46268021082884:0.5474628562555648) arc (75.46268021082884:122.30667084540485:0.5474628562555648) -- cycle;
\draw(0.,0.) circle (5.cm);
\draw(0.,0.) circle (4.cm);
\draw [dash pattern=on 5pt off 5pt] (-2.672253790476793,4.225998069010729)-- (0.,0.);
\draw [dash pattern=on 5pt off 5pt] (0.,0.)-- (1.2550527847876427,4.839921745999287);
\draw [dash pattern=on 1pt off 2pt on 5pt off 4pt] (-0.6741205292953993,4.312389449139797)-- (-2.672253790476793,4.225998069010729);
\draw [dash pattern=on 1pt off 2pt on 5pt off 4pt] (-0.6741205292953993,4.312389449139797)-- (1.2550527847876427,4.839921745999287);
\draw [dash pattern=on 2pt off 2pt] (1.0040422278301142,3.8719373967994297)-- (-0.6741205292953993,4.312389449139797);
\draw [dash pattern=on 2pt off 2pt] (-2.1378030323814343,3.3807984552085837)-- (-0.6741205292953993,4.312389449139797);
\draw [line width=1.6pt,color=ffqqqq] (0.14868696221123126,3.9972355681481164)-- (-0.6741205292953993,4.312389449139797);
\draw [line width=1.6pt,color=ffqqqq] (-1.3476800996158633,3.766133076392731)-- (-0.6741205292953993,4.312389449139797);
\draw [shift={(0.,0.)}, line width=1.6pt,color=qqzzqq]  plot[domain=1.3247591485867362:2.131180954624042,variable=\t]({1.*5.*cos(\t r)+0.*5.*sin(\t r)},{0.*5.*cos(\t r)+1.*5.*sin(\t r)});

\begin{scriptsize}
\draw [fill=ttqqqq] (0.,0.) circle (1.5pt);
\draw[color=ttqqqq] (0.1321889930841786, -0.12113067082648408) node {$O$};
\draw [fill=ffqqqq] (-0.6741205292953993,4.312389449139797) circle (1.5pt);
\draw [fill=qqzzqq] (1.2550527847876427,4.839921745999287) circle (1.5pt);
\draw [fill=qqzzqq] (-2.672253790476793,4.225998069010729) circle (1.5pt);
\draw [fill=uuuuuu] (-2.1378030323814343,3.3807984552085837) circle (1.5pt);
\draw [fill=uuuuuu] (1.0040422278301142,3.8719373967994297) circle (1.5pt);
\draw [fill=ffqqqq] (0.14868696221123126,3.9972355681481164) circle (1.5pt);
\draw[color=qqwuqq] (-0.19569146933380757,3.597151617735788) node {$\geq 2\pi/3$};
\draw[color=ffqqqq] (-0.10504491129323279,4.582584758995801) node {$S$};
\draw[color=ffqqqq] (-1.00504491129323279,4.582584758995801) node {$W$};
\draw[color=ffqqqq] (-1.30504491129323279,3.582584758995801) node {$B$};
\draw[color=ffqqqq] (0.40504491129323279,3.782584758995801) node {$D$};

\draw[color=qqzzqq] (1.10504491129323279, 5.082584758995801) node {$Q_r$};
\draw[color=qqzzqq] (-2.60504491129323279, 4.582584758995801) node {$Q_l$};
\draw[color=uuuuuu] (1.10504491129323279, 3.582584758995801) node {$K_r$};
\draw[color=uuuuuu] (-2.10504491129323279, 3.082584758995801) node {$K_l$};

\draw [fill=ffqqqq] (-1.3476800996158633,3.766133076392731) circle (1.5pt);
\draw[color=qqwuqq] (-0.12343280246010605, 0.6853695277031529) node {$\gamma$};
\end{scriptsize}
\end{tikzpicture}
    \caption{Picture to the case~\ref{m_lmn23(c)}: middle component, $m=2$, $n=1$.}
    \label{Pic_m_lmn23(c)}
\end{center}
\end{figure}  
Then $S$ consists of two segments, i.e.\ $S = [BW] \cup [WD]$, where $B, D \in M_r$ are entering points, $W$ is energetic and $\angle BWD \geq 2 \pi/3$. In this case $\turn (S) = \pi - \angle BWD$.

First, connect $O$ with $Q_l$ and $Q_r$ then denote $K_l = [OQ_l] \cap M_r$ and $K_r = [OQ_r] \cap M_r$. 
Now consider the convex quadrilateral $P = K_lOK_rW$. 
The sum of the angles $\angle K_l + \angle K_r + \angle W$ of $P$ is at least $\pi/2+\angle BWD+ \pi/2$, so that the remaining angle (which is equal to $\gamma$) 
is at most $\pi - \angle BWD = \turn (S)$ as claimed. 

If one has the equality then both $[BW]$ and $[WD]$ are tangent to $M_r$, but $W$ is not energetic point in this case, 
because $Q_l$ is covered by $B = K_l$, $Q_r$ is covered by $D = K_r$, so we got a contradiction.
\end{enumerate}

    \item \textit{Let $S$ be an ending component (without loss of generality let it be the left one, so $Q_r = A$). Recall that $C$ denotes the branching point if $S$ is a tripod and the entering point if $S$ is a seqment. Then there are two options:}
    \begin{enumerate}
    \item \textit{The case $n = 1$, $ m = 1$, the combinatorial type (a) on Fig.~\ref{lmn23}  (see Fig.~\ref{pic_e_lmn23(a)}).} \label{e_lmn23(a)}
    In this case $S = [CS'_l]$, where $C \in M_r$, $|S'_lQ_r| = r$, and
    $\turn(S)=0$.  Denote by $K$ such a point that $K \in [OQ_l)$ and $\angle OQ_rK = \pi/2$. Define the points 
$L \defeq [S'_lC) \cap (OQ_l)$ and $P \defeq [CS'_l) \cap (Q_rK)$, and introduce the angles $\alpha \defeq \angle PS'_lQ_r$ and $\beta \defeq \angle S'_lQ_rK$.

    The following two situations have to be considered. Note that $|S'_lQ_l| = r$, otherwise one can replace $[CS'_l] \cap B_\varepsilon(S'_l)$ in $\Sigma$ by the part $[DF]$ of the segment $[DQ_r]$ where $D = [CS'_l] \cap \partial B_\varepsilon(S'_l)$, $F$ is the point satisfying $\dist (F, Q_r) = r$, producing the competitor of strictly lower length.

    \begin{itemize}
    \item Case $\angle CS'_lQ_r \leq \pi$ (see the top picture on Fig.~\ref{pic_e_lmn23(a)}).
    \input{pictures/e_lmn23a}
    
    Then $\angle ([S_l'A), a) = \beta$ and $\angle([CS_l'), [S_l'A))=\alpha$, so that
    $$\turn (S) + \angle([CS_l'), [S_l'A)) + \angle ([S_l'A), a) = \alpha + \beta.$$ 
    Note that $\angle S'_lPK = \alpha + \beta$ and $\angle OKQ_r = \pi/2 - \gamma$.  If $\alpha + \beta \leq \gamma$ (contrary to the claim being proven), then $\angle OKP + \angle KPS'_l < \pi/2$ so $\angle KLP > \pi/2$, which is impossible because then $|CQ_l| < |S'_lQ_l|$ which contradicts $|S'_lQ_l| = r$, $|CQ_l|\geq r$.
    
    \item Case $\angle CS'_lQ_r > \pi$ (see the bottom picture on Fig.~\ref{pic_e_lmn23(a)}).
    In this case $\angle ([S_l'A), a) = \beta$ and $\angle([CS_l'), [S_l'A)) = -\alpha$, so that 
    $$\turn (S) + \angle([CS_l'), [S_l'A)) + \angle ([S_l'A), a) = \beta - \alpha$$
    and we know that $\angle KPC = \beta - \alpha$. If $\beta - \alpha \leq \gamma$ (the contrary to the claim being proven), then  $\angle OKP + \angle KPC < \pi/2$, which is impossible because then $|CQ_l| < |S'_lQ_l|$ which contradicts $|S'_lQ_l| = r$, $|CQ_l|\geq r$.
    \end{itemize}

    \item \textit{The case $n = 2$, $m = 1$, the combinatorial type (b) on Fig.~\ref{lmn23} (see Fig.~\ref{pic_e_lmn23(b)}).}  \label{e_lmn23(b)}
    \begin{figure}
\begin{center}
\definecolor{qqttcc}{rgb}{0.,0.2,0.8}
    \definecolor{qqzzqq}{rgb}{0.,0.6,0.}
    \definecolor{qqwuqq}{rgb}{0.,0.39215686274509803,0.}
    \definecolor{ffqqqq}{rgb}{1.,0.,0.}
    \definecolor{qqqqff}{rgb}{0.,0.,1.}
    \definecolor{xdxdff}{rgb}{0.49019607843137253,0.49019607843137253,1.}
    \definecolor{uuuuuu}{rgb}{0.26666666666666666,0.26666666666666666,0.26666666666666666}
    \begin{tikzpicture}[line cap=round,line join=round,>=triangle 45,x=2.3cm,y=2.3cm]
    \clip(-2.7257172412593683,1.88660295599792038) rectangle (1.026573021317395,5.186534550985518);
    \draw [shift={(-1.26,4.06)},color=qqwuqq,fill=qqwuqq,fill opacity=0.1] (0,0) -- (-142.85965881923755:0.2900956275786282) arc (-142.85965881923755:-82.85965881923765:0.2900956275786282) -- cycle;
    \draw [shift={(-0.018961932606240354,4.999964044381903)},color=qqwuqq,fill=qqwuqq,fill opacity=0.1] (0,0) -- (-179.78271173720103:0.2900956275786282) arc (-179.78271173720103:-142.85965881923758:0.2900956275786282) -- cycle;
    \draw [shift={(0.,0.)},color=qqwuqq,fill=qqwuqq,fill opacity=0.1] (0,0) -- (90.21728826279899:0.2900956275786282) arc (90.21728826279899:116.57392441624:0.2900956275786282) -- cycle;
    \draw [shift={(-1.3771056238513,4.9948134015143975)},color=qqwuqq,fill=qqwuqq,fill opacity=0.1] (0,0) -- (-82.85965881923765:0.2900956275786282) arc (-82.85965881923765:-82.85965881923765:0.2900956275786282) -- cycle;
    \draw [shift={(-1.3771056238513,4.9948134015143975)},color=qqwuqq,fill=qqwuqq,fill opacity=0.1] (0,0) -- (-179.78271173720103:0.2900956275786282) arc (-179.78271173720103:-82.85965881923762:0.2900956275786282) -- cycle;
    \draw [shift={(-2.4962507394703195,4.990569140141595)},color=qqwuqq,fill=qqwuqq,fill opacity=0.1] (0,0) -- (-63.426075583760024:0.2900956275786282) arc (-63.426075583760024:0.21728826279897984:0.2900956275786282) -- cycle;
    \draw[color=qqttcc,fill=qqttcc,fill opacity=0.1] (-0.22408904295463972,4.999186117499307) -- (-0.22331111607204351,4.794059007150907) -- (-0.01818400572364414,4.794836934033503) -- (-0.018961932606240354,4.999964044381903) -- cycle; 
    \draw(0.,0.) circle (11.5cm);
    \draw(0.,0.) circle (9.2cm);
    \draw [line width=1.6pt,color=ffqqqq] (-1.2282748750046435,3.8067493785948567)-- (-1.26,4.06);
    \draw [dash pattern=on 3pt off 3pt,domain=-1.26:1.026573021317395] plot(\x,{(--1.020209265560636--0.15410007488678712*\x)/0.2034589091633705});
    \draw [dash pattern=on 3pt off 3pt,domain=-3.257172412593683:-1.26] plot(\x,{(-0.8299174900713036--0.09915054651835575*\x)/-0.23518403415872702});
    \draw [dotted](-2.236760538028662,4.471789607698212) circle (2.3cm);
    \draw [dotted](-0.018961932606240354,4.999964044381903) circle (2.3cm);
    \draw [line width=1.6pt,color=ffqqqq] (-1.26,4.06)-- (-0.8161209532126548,4.39619463817965);
    \draw [line width=1.2pt,color=ffqqqq] (-1.26,4.06)-- (-1.315301383926456,4.08331434809826);
    \draw (0.,0.)-- (-0.018961932606240354,4.999964044381903);
    \draw [domain=-3.257172412593683:1.026573021317395] plot(\x,{(-25.-0.018961932606240354*\x)/-4.999964044381903});
    \draw [domain=-3.257172412593683:0.0] plot(\x,{(-0.--4.471789607698212*\x)/-2.236760538028662});
    \draw [dash pattern=on 3pt off 3pt,domain=-3.257172412593683:-0.8161209532126548] plot(\x,{(-2.225754174062981-0.33619463817965034*\x)/-0.44387904678734524});
    \draw [domain=-3.257172412593683:1.026573021317395] plot(\x,{(-0.19029177548933252-0.2532506214051429*\x)/0.03172512499535651});
    \draw [shift={(0.,0.)},line width=1.6pt,color=qqzzqq]  plot[domain=1.5745887224066777:2.034598803034396,variable=\t]({1.*5.*cos(\t r)+0.*5.*sin(\t r)},{0.*5.*cos(\t r)+1.*5.*sin(\t r)});
    \begin{scriptsize}
    \draw [fill=uuuuuu] (0.,0.) circle (1.5pt);
    \draw[color=uuuuuu] (0.06925745030792105,0.13580369181236068) node {$O$};
    \draw [fill=ffqqqq] (-1.2282748750046435,3.8067493785948567) circle (1.5pt);
    \draw[color=ffqqqq] (-1.1588140397749385,3.9457262673450013) node {$B$};
    \draw [fill=ffqqqq] (-1.26,4.06) circle (1.5pt);
    \draw[color=ffqqqq] (-1.1878236025328015,4.197142477913145) node {$C$};
    \draw[color=ffqqqq] (-0.8878236025328015,4.197142477913145) node {$S$};

    \draw [fill=qqzzqq] (-2.236760538028662,4.471789607698212) circle (1.5pt);
    \draw[color=qqzzqq] (-2.1644788820475167,4.6032763565232235) node {$Q_l$};
    \draw[color=qqzzqq] (-1.1644788820475167,4.7532763565232235) node {$q_S$};
    \draw [fill=qqzzqq] (-0.018961932606240354,4.999964044381903) circle (1.5pt);
    \draw[color=qqzzqq] (0.04991774180267916,5.135118340417374) node {$Q_r$};
    \draw [fill=ffqqqq] (-1.315301383926456,4.08331434809826) circle (1.5pt);
    \draw[color=ffqqqq] (-1.3658427280485271,4.2164821864183875) node {$W$};
    \draw [fill=ffqqqq] (-0.8161209532126548,4.39619463817965) circle (1.5pt);
    \draw[color=ffqqqq] (-0.752680161164859,4.5355873767548776) node {$S'_l$};
    \draw [fill=uuuuuu] (0.7781970880001738,5.603733450584156) circle (1.5pt);
    \draw[color=uuuuuu] (2.041907717842593,5.347855133975034) node {$I$};
    \draw[color=black] (-3.1798135785727153,4.912711692607093) node {$l$};
    \draw[color=qqwuqq] (-1.29750868089751407,3.923085101365969) node {$\alpha $};
    \draw[color=qqwuqq] (-0.2659751183498561,4.880400672375439) node {$\beta$};
    \draw[color=qqwuqq] (-0.044008916758634,0.2034926715807071) node {$\gamma$};
    \draw [fill=uuuuuu] (-1.0024593490929847,2.0041426979773016) circle (1.5pt);
    \draw[color=uuuuuu] (-0.9364073919646569,2.1374635221048903) node {$L$};
    \draw [fill=uuuuuu] (-2.4962507394703195,4.990569140141595) circle (1.5pt);
    \draw[color=uuuuuu] (-2.425564946868282,5.125448486164753) node {$K$};
    \draw [fill=uuuuuu] (-1.3771056238513,4.9948134015143975) circle (1.5pt);
    \draw[color=uuuuuu] (-1.3135317078168736,5.125448486164753) node {$P$};
    \draw[color=qqwuqq] (-2.967076785015055,5.463893385006485) node {$\zeta = 0\textrm{\degre}$};
    \draw[color=qqwuqq] (-1.501441855223177,4.903041838354472) node {$\alpha+\beta$};
    \draw[color=qqwuqq] (-2.19817420884898558,4.912051401112335) node {$\frac{\pi}{2}-\gamma$};

    \end{scriptsize}
    \end{tikzpicture}
    \caption{Picture to the case~\ref{e_lmn23(b)}: ending component, $n = 2$, $m = 1$.}
    \label{pic_e_lmn23(b)}
	\end{center}
    \end{figure}
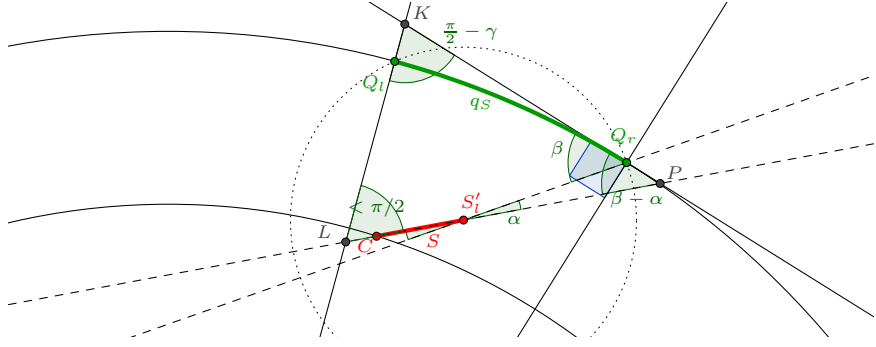
    
    Note that $S$ is a tripod: $S = [BC]\cup [CW] \cup [CS'_l] \subset \overline{(N\setminus N_r)}$, where $B \in M_r$. 
    Let us prove that $Q_r = [CS'_l) \cap M$ and $Q_l = [CW) \cap M$. Suppose the contrary i.e.\ without loss of generality $C$, $S_l'$, and $Q_r$ do not lie on the same line.
    Let us pick a sufficiently small $\varepsilon > 0$ and denote by $J$ the intersection point of $\partial B_\varepsilon (S_l')$ with $[CS_l']$. Then one may replace $[JS_l']$ by $[JI]$ in $\Sigma$, where $I$ stands for the intersection point of $\partial B_r(Q_r)$ with $[JQ_r]$. Clearly the resulting set covers $q_{S_l}$, so it has the same energy $F_M$; by the triangle inequality it has strictly lower length, so we got a contradiction.  
    
    Note that $|S'_lQ_r|=r=|WQ_l|$; $B_r(Q_r) \cap \Sigma = B_r(Q_l) \cap \Sigma = \emptyset$. 
    Let $K\in [OQ_l)$ be the point satisfying $(Q_rK) \perp (OQ_r)$. 
    Then $\alpha \defeq \turn(S) = \angle([BC), [CQ_r)) = \pi/3$, $\angle([CS_l'), [S_l'A)) = 0$ and  $\beta \defeq ([CQ_r), [Q_rK)) = \angle ([S_l'A), a)$. We have to show $\alpha+ \beta > \gamma$.
    Let $P$ be the point of intersection of $(KQ_r]$ and $[BC)$. Then $\angle OKP = \pi/2 - \gamma$ and $\angle KPC = \alpha + \beta$. Assume the contrary, i.e.\ $\alpha+ \beta \leq \gamma$. Then $\angle OKP + \angle KPC \leq \pi/2$ hence $\angle KLP \geq \pi/2$, where $L$ is the point of intersection of $(BC)$ and $(OK)$, but since $\angle Q_lCL = 2\pi/3$, then the sum of the angles of the triangle $\Delta CLQ_l$ exceeds $\pi$, which is impossible.

    \item \textit{The case $n = 2$, $ m = 1$, the combinatorial type (c) on Fig.~\ref{lmn23} (see Fig.~\ref{pic_e_lmn23(c)})}.
    \label{e_lmn23(c)}
    \begin{figure}
\begin{center}

\definecolor{ttqqqq}{rgb}{0.2,0.,0.}
\definecolor{ttzzqq}{rgb}{0.2,0.6,0.}
\definecolor{qqwuqq}{rgb}{0.,0.39215686274509803,0.}
\definecolor{uuuuuu}{rgb}{0.26666666666666666,0.26666666666666666,0.26666666666666666}
\definecolor{ffqqqq}{rgb}{1.,0.,0.}
\definecolor{xdxdff}{rgb}{0.49019607843137253,0.49019607843137253,1.}
\definecolor{qqqqff}{rgb}{0.,0.,1.}
\begin{tikzpicture}[line cap=round,line join=round,>=triangle 45,x=3.0cm,y=3.0cm]
\clip(8.087244680931269,6.330215894322556) rectangle (11.512990429972701,9.973735211762673);
\draw [shift={(0.8342481486993167,1.6425662766580746)},color=qqwuqq,fill=qqwuqq,fill opacity=0.1] (0,0) -- (109.1027822100106:0.25127719430621753) arc (109.1027822100106:146.04369252437712:0.25127719430621753) -- cycle;
\draw [shift={(-0.2550202615275577,3.398667261271955)},color=qqwuqq,fill=qqwuqq,fill opacity=0.1] (0,0) -- (-95.68993774918141:0.25127719430621753) arc (-95.68993774918141:-29.28109573597084:0.25127719430621753) -- cycle;
\draw [shift={(-0.2550202615275577,3.398667261271955)},color=qqqqff,fill=qqqqff,fill opacity=0.1] (0,0) -- (-149.28109573597084:0.25127719430621753) arc (-149.28109573597084:-95.68993774918141:0.25127719430621753) -- cycle;
\draw [shift={(-0.669513117927742,3.152374404570396)},color=qqqqff,fill=qqqqff,fill opacity=0.1] (0,0) -- (-72.29809973885571:0.16751812953747836) arc (-72.29809973885571:-9.56241946371832:0.16751812953747836) -- cycle;
\draw [shift={(-0.669513117927742,3.152374404570396)},color=qqwuqq,fill=qqwuqq,fill opacity=0.1] (0,0) -- (-9.562419463718317:0.16751812953747836) arc (-9.562419463718317:30.718904264029195:0.16751812953747836) -- cycle;
\draw [shift={(11.060956949431324,6.464360883202412)},color=qqwuqq,fill=qqwuqq,fill opacity=0.1] (0,0) -- (108.807968982505:0.25127719430621753) arc (108.807968982505:147.76435727299193:0.25127719430621753) -- cycle;
\draw(0.8342481486993167,1.6425662766580746) circle (6.cm);
\draw(0.8342481486993167,1.6425662766580746) circle (9.cm);
\draw [line width=2.pt,color=ffqqqq] (-0.669513117927742,3.152374404570396)-- (-0.2550202615275577,3.398667261271955);
\draw [domain=8.087244680931269:0.8342481486993167] plot(\x,{(-5.485281875386651--1.6756816043594478*\x)/-2.488391279684798});
\draw [domain=8.087244680931269:0.8342481486993167] plot(\x,{(-2.6517221587893767--1.8898660435188797*\x)/-0.6545275682156528});
\draw [dotted] (-0.1475432036241624,4.477365341936394) circle (3.cm);
\draw [dash pattern=on 2pt off 2pt,domain=-1.6541431309854813:11.512990429972701] plot(\x,{(--2.9928679827374025-0.16587347644712658*\x)/0.9846300130577392});
\draw [dash pattern=on 2pt off 2pt,domain=8.087244680931269:-0.1475432036241624] plot(\x,{(-0.6403686246744196-1.0786980806644388*\x)/-0.10747705790339529});
\draw [dotted] (-1.6541431309854813,3.3182478810175224) circle (3.cm);
\draw [shift={(0.8342481486993167,1.6425662766580746)},line width=2.pt,color=ttzzqq]  plot[domain=1.9042027726509805:1.5489432863206094,variable=\t]({1.*3.*cos(\t r)+0.*3.*sin(\t r)},{0.*3.*cos(\t r)+1.*3.*sin(\t r)});
\draw [line width=2.pt,color=ffqqqq] (-0.6243691738542607,3.0109359293274958)-- (-0.669513117927742,3.152374404570396);
\draw [line width=2.pt,color=ffqqqq] (-0.2550202615275577,3.398667261271955)-- (-0.24668820076800613,3.4822923448445473);
\draw [line width=2.pt,color=ffqqqq] (-0.2550202615275577,3.398667261271955)-- (-0.18842735425026425,3.3613259502793817);
\draw(11.060956949431324,6.464360883202412) circle (6.cm);
\draw(11.060956949431324,6.464360883202412) circle (9.cm);
\draw [dotted] (10.093764879004203,9.304174169125075) circle (3.cm);
\draw [dotted] (8.523372421905853,8.064568608987827) circle (3.cm);
\draw [shift={(11.060956949431324,6.464360883202412)},line width=2.pt,color=ttzzqq]  plot[domain=1.8990573111525768:2.5789745515069384,variable=\t]({1.*3.*cos(\t r)+0.*3.*sin(\t r)},{0.*3.*cos(\t r)+1.*3.*sin(\t r)});
\draw [color=ttqqqq] (8.523372421905853,8.064568608987827)-- (11.060956949431324,6.464360883202412);
\draw (11.060956949431324,6.464360883202412)-- (10.093764879004203,9.304174169125075);
\draw [line width=1.6pt,color=ffqqqq] (9.912658247971578,8.101862056400726)-- (9.814004234050453,8.3441043896116);
\draw [dash pattern=on 1pt off 1pt] (9.814004234050453,8.3441043896116)-- (8.523372421905853,8.064568608987827);
\draw [dash pattern=on 1pt off 1pt] (9.544847256736501,7.420423791480868)-- (10.093764879004203,9.304174169125075);
\draw [dash pattern=on 1pt off 1pt] (10.413176317780554,6.872853116717308)-- (9.814004234050453,8.3441043896116);
\draw [line width=2.pt,color=ffqqqq] (9.500711404992064,8.276248820958644)-- (9.814004234050453,8.3441043896116);
\begin{scriptsize}

\draw [fill=uuuuuu] (11.060956949431324,6.464360883202412) circle (1.5pt);
\draw[color=uuuuuu] (11.119322825559625,6.614996714536266) node {$O$};
\draw [fill=ttzzqq] (8.523372421905853,8.064568608987827) circle (1.5pt);
\draw[color=ttzzqq] (8.363649594668107,8.164539412757925) node {$Q_l$};
\draw [fill=ttzzqq] (10.093764879004203,9.304174169125075) circle (1.5pt);
\draw[color=ttzzqq] (10.164469487196,9.496308542580865) node {$Q_r$};
\draw [fill=ffqqqq] (9.912658247971578,8.101862056400726) circle (1.5pt);
\draw[color=ffqqqq] (9.9718236382279,8.256674384003537) node {$C$};
\draw [fill=ffqqqq] (9.814004234050453,8.3441043896116) circle (1.5pt);
\draw[color=ffqqqq] (10.021568199366655,8.433079297740374) node {$W_r$};
\draw [fill=ffqqqq] (9.500711404992064,8.276248820958644) circle (1.5pt);
\draw[color=ffqqqq] (9.33525474598548,8.440944326494762) node {$W_l$};
\draw [fill=uuuuuu] (9.544847256736501,7.420423791480868) circle (1.5pt);
\draw[color=uuuuuu] (9.628411472676069,7.486090988131145) node {$K$};
\draw [fill=uuuuuu] (10.413176317780554,6.872853116717308) circle (1.5pt);
\draw[color=uuuuuu] (10.474378026840334,6.991912505995589) node {$L$};
\end{scriptsize}
\end{tikzpicture}

\caption{Picture to the case~\ref{e_lmn23(c)}: ending component, $n = 2$, $m = 1$.}
\label{pic_e_lmn23(c)}

\end{center}
\end{figure}
    In this case $A = Q_r$, $S_l' = W_r$. Denote $\angle ([CW_r),[W_rQ_r))$ by $\alpha$, $\angle ([S_l'A), a) = \angle ([W_rQ_r),a)$ by $\beta$, 
    Clearly $\turn (S) = \alpha + \beta$, $\turn (q_S) = \gamma$. 
    Let $L$ be the point of intersection of $(W_rC)$ and $(Q_lO)$.
    Suppose the contrary, i.e. $\gamma \geq \alpha + \beta$.
    Then $$\angle W_rLQ_l = \pi - \angle W_rLO = \pi - (2\pi - \angle LW_rQ_r - \angle W_rQ_rO - \angle Q_rOL) = $$
    $$\pi - (2\pi - (\pi - \alpha) - (\pi/2 - \beta) - \gamma) =  \pi/2 - \beta - \alpha + \gamma \geq \pi/2,$$
    which is impossible because then $|CQ_l| < |S'_lQ_l|$, which contradicts $|S'_lQ_l| = r$, $|CQ_l|\geq r$.
    \end{enumerate}
    
    \end{enumerate}
    \end{proof}

\begin{proof} [Proof of Corollary~\ref{coroll}]





Let $\hat \Sigma$ be a local minimizer in the sense of Definition~\ref{local}.
Suppose the claim is false, i.e.\ 
\begin{equation}
\H(\hat \Sigma) - \H(\Sigma) < (R - 5r)/2
\label{qq1}
\end{equation}
and $\hat \Sigma$ is not a horseshoe.
Suppose first that $\hat \Sigma_r$ contains no line segment of 
length exceeding 
$$a_M'(r) := 2r + \H(\hat \Sigma) - \H(\Sigma) < 2r + (R - 5r)/2.$$
Then Lemma~\ref{hex} remains true for this situation with $a_M'$ instead of $a_M$, because $2a'_M (r) + r < R$. 
Lemma~\ref{compcon} also remains true with $a_M'(r)$ instead of $a_M$ by the same reason.
We may repeat now line by line the proof of Theorem~\ref{theo1} without any change because all the arguments used in this proof as well as
in Lemma~\ref{central} are local, except the Lemma~\ref{hex} and part of Lemma~\ref{compcon} (the claim $m(S) \leq 2$) which hold true with $a_M'$ instead of $a_M$.
This proves that $\hat \Sigma$ is a horseshoe in the considered case.

On the other hand it is impossible to $\hat \Sigma_r$ to have a  segment of length at least $a_M'(r)$, otherwise using the replacement from Lemma~\ref{global_lm}(iii) and get a contradiction with (\ref{qq1}).



\end{proof}

\begin{figure}
\begin{center}

\definecolor{ffqqqq}{rgb}{1.,0.,0.}
\definecolor{qqwuqq}{rgb}{0.,0.39215686274509803,0.}
\definecolor{yqqqqq}{rgb}{0.5019607843137255,0.,0.}
\definecolor{qqqqcc}{rgb}{0.,0.,0.8}
\definecolor{xdxdff}{rgb}{0.49019607843137253,0.49019607843137253,1.}
\definecolor{uuuuuu}{rgb}{0.26666666666666666,0.26666666666666666,0.26666666666666666}
\definecolor{qqqqff}{rgb}{0.,0.,1.}
\begin{tikzpicture}[line cap=round,line join=round,>=triangle 45,x=1.0cm,y=1.0cm]
\clip(-4.26,-0.24) rectangle (7.34,6.12);
\draw [shift={(0.7606597282413003,3.529612582362496)},color=qqwuqq,fill=qqwuqq,fill opacity=0.1] (0,0) -- (131.96270954699338:0.6) arc (131.96270954699338:251.96270954699332:0.6) -- cycle;
\draw [shift={(4.82,3.56)},color=qqwuqq,fill=qqwuqq,fill opacity=0.1] (0,0) -- (-101.7251120151651:0.6) arc (-101.7251120151651:39.596208639750294:0.6) -- cycle;
\draw [shift={(0.7606597282413003,3.529612582362496)},color=qqwuqq,fill=qqwuqq,fill opacity=0.1] (0,0) -- (11.96270954699333:0.6) arc (11.96270954699333:131.9627095469934:0.6) -- cycle;
\draw [shift={(0.7606597282413003,3.529612582362496)},color=qqwuqq,fill=qqwuqq,fill opacity=0.1] (0,0) -- (-108.03729045300666:0.6) arc (-108.03729045300666:11.962709546993352:0.6) -- cycle;
\draw [line width=1.2pt,color=ffqqqq] (-3.48,1.42)-- (-2.36,4.42);
\draw [line width=1.2pt,color=ffqqqq] (-0.04,4.42)-- (0.7606597282413003,3.529612582362496);
\draw [line width=1.2pt,color=ffqqqq] (0.7606597282413003,3.529612582362496)-- (2.32,3.86);
\draw [line width=1.2pt,color=ffqqqq] (0.7606597282413003,3.529612582362496)-- (-0.01581521935548058,1.1451500585551955);
\draw [line width=1.2pt,color=ffqqqq] (4.38,1.44)-- (4.82,3.56);
\draw [line width=1.2pt,color=ffqqqq] (6.44,4.9)-- (4.82,3.56);
\begin{scriptsize}
\draw [fill=qqqqff] (-3.48,1.42) circle (2.5pt);
\draw [fill=qqqqff] (-2.36,4.42) circle (2.5pt);
\draw [fill=qqqqff] (-0.04,4.42) circle (2.5pt);
\draw [fill=qqqqff] (2.32,3.86) circle (2.5pt);
\draw [fill=qqqqff] (0.7606597282413003,3.529612582362496) circle (2.5pt);
\draw [fill=qqqqff] (-0.01581521935548058,1.1451500585551955) circle (2.5pt);
\draw[color=qqwuqq] (-0.19,3.52) node {$2\pi/3$};
\draw [fill=qqqqff] (4.38,1.44) circle (2.5pt);
\draw [fill=qqqqff] (4.82,3.56) circle (2.5pt);
\draw [fill=qqqqff] (6.44,4.9) circle (2.5pt);
\draw[color=qqwuqq] (6.10,3.44) node {$\geq 2\pi/3$};
\draw[color=qqwuqq] (1.48,4.42) node {$2\pi/3$};
\draw[color=qqwuqq] (1.7,3.34) node {$2\pi/3$};

\draw[color=uuuuuu] (-3.0,1.0) node {$(a)$};
\draw[color=uuuuuu] (1.0,1.0) node {$(b)$};
\draw[color=uuuuuu] (4.0,1.0) node {$(c)$};

\end{scriptsize}
\end{tikzpicture}

\caption{Locally miminal networks for sets of $2$ and $3$ points.}
\label{lmn23}
\end{center}
\end{figure}
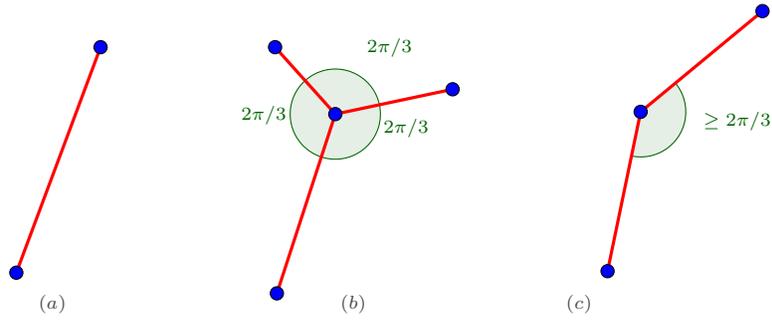
\begin{figure}
\begin{center}

\definecolor{qqwuqq}{rgb}{0.,0.39215686274509803,0.}
\definecolor{ttqqqq}{rgb}{0.2,0.,0.}
\definecolor{ffqqqq}{rgb}{1.,0.,0.}
\definecolor{uuuuuu}{rgb}{0.26666666666666666,0.26666666666666666,0.26666666666666666}
\definecolor{qqqqff}{rgb}{0.,0.,1.}
\begin{tikzpicture}[line cap=round,line join=round,>=triangle 45,x=0.7cm,y=0.7cm]
\clip(-6.464675079481678,-7.669232706185321) rectangle (16.339264209268627,6.203551972605543);
\draw [shift={(0.8452994616207482,2.)},color=qqwuqq,fill=qqwuqq,fill opacity=0.1] (0,0) -- (180.:0.6315471617947253) arc (180.:300.:0.6315471617947253) -- cycle;
\draw [shift={(10.5,2.5566243270259372)},color=qqwuqq,fill=qqwuqq,fill opacity=0.1] (0,0) -- (150.:0.47366037134604394) arc (150.:270.:0.47366037134604394) -- cycle;
\draw [shift={(8.92339922363637,-6.046390732727287)},color=qqwuqq,fill=qqwuqq,fill opacity=0.1] (0,0) -- (-1.0050860052541697:0.6315471617947253) arc (-1.0050860052541697:160.30586724853868:0.6315471617947253) -- cycle;
\draw [shift={(-3.0877843563636187,-4.983454132727285)},color=qqwuqq,fill=qqwuqq,fill opacity=0.1] (0,0) -- (-0.14469213169344994:0.6315471617947253) arc (-0.14469213169344994:162.40757543781837:0.6315471617947253) -- cycle;
\draw [shift={(-1.1546710824314497,-4.988335938636718)},color=qqwuqq,fill=qqwuqq,fill opacity=0.1] (0,0) -- (59.85530786830656:0.6315471617947253) arc (59.85530786830656:179.8553078683066:0.6315471617947253) -- cycle;
\draw [shift={(-1.1546710824314497,-4.988335938636718)},color=qqwuqq,fill=qqwuqq,fill opacity=0.1] (0,0) -- (-60.14469213169347:0.6315471617947253) arc (-60.14469213169347:59.855307868306554:0.6315471617947253) -- cycle;
\draw [shift={(-1.1546710824314497,-4.988335938636718)},color=qqwuqq,fill=qqwuqq,fill opacity=0.1] (0,0) -- (179.85530786830657:0.6315471617947253) arc (179.85530786830657:299.85530786830657:0.6315471617947253) -- cycle;
\draw [shift={(10.942978763636367,-6.081821952727287)},color=qqwuqq,fill=qqwuqq,fill opacity=0.1] (0,0) -- (19.487907600526214:0.6315471617947253) arc (19.487907600526214:178.99491399474581:0.6315471617947253) -- cycle;
\draw [shift={(0.8452994616207482,2.)},color=qqwuqq,fill=qqwuqq,fill opacity=0.1] (0,0) -- (60.:0.6315471617947253) arc (60.:180.:0.6315471617947253) -- cycle;
\draw [shift={(0.8452994616207482,2.)},color=qqwuqq,fill=qqwuqq,fill opacity=0.1] (0,0) -- (-60.:0.6315471617947253) arc (-60.:60.:0.6315471617947253) -- cycle;
\draw [shift={(-1.8452994616207485,2.)},color=qqwuqq,fill=qqwuqq,fill opacity=0.1] (0,0) -- (120.:0.6315471617947253) arc (120.:240.:0.6315471617947253) -- cycle;
\draw [shift={(-1.8452994616207485,2.)},color=qqwuqq,fill=qqwuqq,fill opacity=0.1] (0,0) -- (-120.:0.6315471617947253) arc (-120.:0.:0.6315471617947253) -- cycle;
\draw [shift={(-1.8452994616207485,2.)},color=qqwuqq,fill=qqwuqq,fill opacity=0.1] (0,0) -- (0.:0.6315471617947253) arc (0.:120.:0.6315471617947253) -- cycle;
\draw [shift={(10.5,2.5566243270259372)},color=qqwuqq,fill=qqwuqq,fill opacity=0.1] (0,0) -- (30.:0.47366037134604394) arc (30.:150.:0.47366037134604394) -- cycle;
\draw [shift={(10.5,2.5566243270259372)},color=qqwuqq,fill=qqwuqq,fill opacity=0.1] (0,0) -- (-90.:0.47366037134604394) arc (-90.:30.:0.47366037134604394) -- cycle;
\draw [shift={(10.5,1.4433756729740648)},color=qqwuqq,fill=qqwuqq,fill opacity=0.1] (0,0) -- (90.:0.47366037134604394) arc (90.:210.:0.47366037134604394) -- cycle;
\draw [shift={(10.5,1.4433756729740648)},color=qqwuqq,fill=qqwuqq,fill opacity=0.1] (0,0) -- (-150.:0.47366037134604394) arc (-150.:-30.:0.47366037134604394) -- cycle;
\draw [shift={(10.5,1.4433756729740648)},color=qqwuqq,fill=qqwuqq,fill opacity=0.1] (0,0) -- (-30.:0.47366037134604394) arc (-30.:90.:0.47366037134604394) -- cycle;
\draw [line width=1.2pt,color=ffqqqq] (-3.,4.)-- (-1.8452994616207485,2.);
\draw [line width=1.2pt,color=ffqqqq] (-1.8452994616207485,2.)-- (-3.,0.);
\draw [line width=1.2pt,color=ffqqqq] (-1.8452994616207485,2.)-- (0.8452994616207482,2.);
\draw [line width=1.2pt,color=ffqqqq] (0.8452994616207482,2.)-- (2.,0.);
\draw [line width=1.2pt,color=ffqqqq] (0.8452994616207482,2.)-- (2.,4.);
\draw [line width=1.2pt,color=ffqqqq] (8.,0.)-- (10.5,1.4433756729740648);
\draw [line width=1.2pt,color=ffqqqq] (10.5,1.4433756729740648)-- (13.,0.);
\draw [line width=1.2pt,color=ffqqqq] (8.,4.)-- (10.5,2.5566243270259372);
\draw [line width=1.6pt,color=ffqqqq] (10.5,2.5566243270259372)-- (10.5,1.4433756729740648);
\draw [line width=1.6pt,color=ffqqqq] (10.5,2.5566243270259372)-- (13.,4.);
\draw [line width=1.2pt,color=ffqqqq] (-5.993144396363616,-4.062242412727284)-- (-3.0877843563636187,-4.983454132727285);
\draw [line width=1.2pt,color=ffqqqq] (-3.0877843563636187,-4.983454132727285)-- (-1.1546710824314497,-4.988335938636718);
\draw [line width=1.2pt,color=ffqqqq] (0.,-7.)-- (-1.1546710824314497,-4.988335938636718);
\draw [line width=1.2pt,color=ffqqqq] (-1.1546710824314497,-4.988335938636718)-- (0.,-3.);
\draw [line width=1.2pt,color=ffqqqq] (6.,-5.)-- (8.92339922363637,-6.046390732727287);
\draw [line width=1.2pt,color=ffqqqq] (8.92339922363637,-6.046390732727287)-- (10.942978763636367,-6.081821952727287);
\draw [line width=1.2pt,color=ffqqqq] (10.942978763636367,-6.081821952727287)-- (14.,-5.);
\begin{scriptsize}
\draw [fill=qqqqff] (-3.,4.) circle (2.5pt);
\draw [fill=qqqqff] (-3.,0.) circle (2.5pt);
\draw [fill=qqqqff] (2.,0.) circle (2.5pt);
\draw [fill=qqqqff] (2.,4.) circle (2.5pt);
\draw [fill=qqqqff] (8.,4.) circle (2.5pt);
\draw [fill=qqqqff] (8.,0.) circle (2.5pt);
\draw [fill=qqqqff] (13.,0.) circle (2.5pt);
\draw [fill=qqqqff] (13.,4.) circle (2.5pt);
\draw [fill=qqqqff] (10.5,2.5566243270259372) circle (2.5pt);
\draw [fill=qqqqff] (-5.993144396363616,-4.062242412727284) circle (2.5pt);
\draw [fill=qqqqff] (-3.0877843563636187,-4.983454132727285) circle (2.5pt);
\draw [fill=qqqqff] (0.,-3.) circle (2.5pt);
\draw [fill=qqqqff] (0.,-7.) circle (2.5pt);
\draw [fill=qqqqff] (6.,-5.) circle (2.5pt);
\draw [fill=qqqqff] (8.92339922363637,-6.046390732727287) circle (2.5pt);
\draw [fill=qqqqff] (10.942978763636367,-6.081821952727287) circle (2.5pt);
\draw [fill=qqqqff] (14.,-5.) circle (2.5pt);
\draw [fill=qqqqff] (-1.8452994616207485,2.) circle (2.5pt);
\draw [fill=qqqqff] (0.8452994616207482,2.) circle (2.5pt);
\draw [fill=qqqqff] (10.5,1.4433756729740648) circle (2.5pt);
\draw [fill=qqqqff] (-1.1546710824314497,-4.988335938636718) circle (2.5pt);

\draw[color=qqwuqq] (-3.7439657320986438,-4.141021438340103) node {$\geq 2\pi/3$};
\draw[color=qqwuqq] (-1.72295442176550093,-4.064176154519576) node {$2\pi/3$};
\draw[color=qqwuqq] (-0.075070594958054284,-4.87994973541694) node {$2\pi/3$};
\draw[color=qqwuqq] (-1.7282223474962921,-5.795723316314306) node {$2\pi/3$};

\draw[color=qqwuqq] (11.181679381770058, -5.00622897148088) node {$\geq 2\pi/3$};
\draw[color=qqwuqq] (9.38176997065509,   -5.074651613391143) node {$\geq 2\pi/3$};

\draw[color=uuuuuu] (-3.3664493341828933, -7.0458463599268976) node {$(c)$};
\draw[color=uuuuuu] (10.7664493341828933, -7.0458463599268976) node {$(d)$};

\draw[color=qqwuqq] (0.5927285702468283,1.1827218400424312) node {$2\pi/3$};
\draw[color=qqwuqq] (0.5927285702468283,2.9458463599268976) node {$2\pi/3$};
\draw[color=qqwuqq] (2.0242757320415533,2.098495420939796) node {$2\pi/3$};

\draw[color=qqwuqq] (-3.0650072387267977, 2.098495420939796) node {$2\pi/3$};
\draw[color=qqwuqq] (-1.7281921512012813, 1.1827218400424312) node {$2\pi/3$};
\draw[color=qqwuqq] (-1.8281921512012813, 2.9458463599268976) node {$2\pi/3$};

\draw[color=qqwuqq] (10.360668071436914,3.451053893067676) node {$2\pi/3$};
\draw[color=qqwuqq] (11.486947307500848,2.572155792285844) node {$2\pi/3$};
\draw[color=qqwuqq] (9.434419031667992,2.5405784341961075) node {$2\pi/3$};

\draw[color=qqwuqq] (9.439686957398782,1.6458765562219042) node {$2\pi/3$};
\draw[color=qqwuqq] (10.423822787616386,0.7774841106066472) node {$2\pi/3$};
\draw[color=qqwuqq] (11.623762395026365,1.6879897657732217) node {$2\pi/3$};

\draw[color=uuuuuu] (-0.3664493341828933,  -1.0458463599268976) node {$(a)$};
\draw[color=uuuuuu] (10.7664493341828933, -1.0458463599268976) node {$(b)$};

\end{scriptsize}
\end{tikzpicture}

\caption{Locally miminal networks for sets of $4$ points.}
\label{lmn4}

\end{center}
\end{figure}
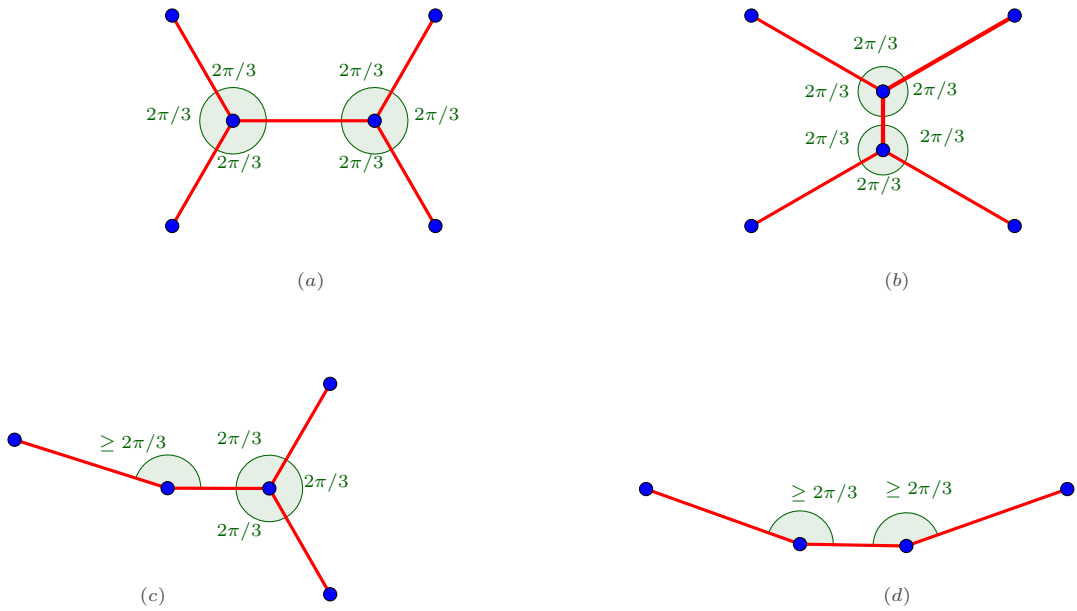


\section*{Acknowledgements} This work is supported by the Russian Science Foundation grant 16-11-10039.
The authors are grateful to Misha Basok, Joseph Gordon, Peter Zograf, Nikita Rastegaev, Fedor Petrov and Eugene Stepanov for numerous useful remarks. Also we are very grateful to Alexander Knop for a help with LaTeX problems.

\bibliographystyle{plain}
\bibliography{main}

\begin{thebibliography}{10}

\bibitem{BouchJimMahad11}
G.~Bouchitt{\'e}, C.~Jimenez, and R.~Mahadevan.
\newblock Asymptotic analysis of a class of optimal location problems.
\newblock {\em J. Math. Pures Appl. (9)}, 95(4):382--419, 2011.

\bibitem{BrabutSamSte09}
A.~Brancolini, G.~Buttazzo, F.~Santambrogio, and E.~Stepanov.
\newblock Long-term planning versus short-term planning in the asymptotical
  location problem.
\newblock {\em ESAIM Control Optim. Calc. Var.}, 15(3):509--524, 2009.

\bibitem{BOS}
G.~Buttazzo, E.~Oudet, and E.~Stepanov.
\newblock Optimal transportation problems with free {D}irichlet regions.
\newblock In {\em Variational methods for discontinuous structures}, volume~51
  of {\em Progr. Nonlinear Differential Equations Appl.}, pages 41--65.
  Birkh\"auser, Basel, 2002.

\bibitem{BPSS}
G.~Buttazzo, A.~Pratelli, S.~Solimini, and E.~Stepanov.
\newblock {\em Optimal urban networks via mass transportation}, volume 1961 of
  {\em Lecture Notes in Mathematics}.
\newblock Springer-Verlag, Berlin, 2009.

\bibitem{BS1}
G.~Buttazzo and E.~Stepanov.
\newblock Optimal transportation networks as free {D}irichlet regions for the
  {M}onge-{K}antorovich problem.
\newblock {\em Ann. Sc. Norm. Super. Pisa Cl. Sci. (5)}, 2(4):631--678, 2003.

\bibitem{BS2}
G.~Buttazzo and E.~Stepanov.
\newblock Minimization problems for average distance functionals.
\newblock {\em Calculus of Variations: Topics from the Mathematical Heritage of
  Ennio De Giorgi, D. Pallara (ed.), Quaderni di Matematica, Seconda
  Universit\`{a} di Napoli, Caserta}, 14:47--83, 2004.

\bibitem{EiSaHa}
S.~Eilenberg and O.~G. Harrold.
\newblock Continua of finite linear measure {I}.
\newblock {\em American Journal of Mathematics}, 65(1):137--146, 1943.

\bibitem{GL}
S.~Graf and H.~Luschgy.
\newblock {\em {Foundations of quantization for probability distributions}},
  volume 1730 of {\em {Lecture Notes in Mathematics}}.
\newblock Springer-Verlag, Berlin, 2000.

\bibitem{ivanov1994minimal}
A.~O. Ivanov and A.~A. Tuzhilin.
\newblock {\em Minimal Networks: The Steiner Problem and Its Generalizations}.
\newblock CRC press, 1994.

\bibitem{L}
A.~Lemenant.
\newblock A presentation of the average distance minimizing problem.
\newblock {\em Zap. Nauchn. Sem. S.-Peterburg. Otdel. Mat. Inst. Steklov.
  (POMI)}, 390(Teoriya Predstavlenii, Dinamicheskie Sistemy, Kombinatornye
  Metody. XX):117--146, 308, 2011.

\bibitem{XS}
X.~Y. Lu and D.~Slep\v{c}ev.
\newblock Properties of minimizers of average-distance problem via discrete
  approximation of measures.
\newblock {\em SIAM Journal on Mathematical Analysis}, 45(5):820 -- 836, 2013.

\bibitem{mir}
M.~Miranda, Jr., E.~Paolini, and E.~Stepanov.
\newblock On one-dimensional continua uniformly approximating planar sets.
\newblock {\em Calc. Var. Partial Differential Equations}, 27(3):287--309,
  2006.

\bibitem{PaoSte04max}
E.~Paolini and E.~Stepanov.
\newblock Qualitative properties of maximum distance minimizers and average
  distance minimizers in {${\Bbb R}^n$}.
\newblock {\em J. Math. Sci. (N. Y.)}, 122(3):3290--3309, 2004.
\newblock Problems in mathematical analysis.

\bibitem{PaoSte13Steiner}
E.~Paolini and E.~Stepanov.
\newblock Existence and regularity results for the {S}teiner problem.
\newblock {\em Calc. Var. Partial Differential Equations}, 46(3-4):837--860,
  2013.

\bibitem{SD}
A.~Suzuki and Z.~Drezner.
\newblock The $p$-center location problem in an area.
\newblock {\em Location science}, 4(1):69 -- 82, 1996.

\bibitem{SO}
A.~Suzuki and A.~Okabe.
\newblock Using {V}oronoi diagrams.
\newblock {\em Drezner, Z. (ed.): Facility location: A survey of applications
  and methods, Springer series in operations research}, pages 103 -- 118, 1995.

\end{thebibliography}

\end{document}